\newtheorem{set2}{Satz}[section]
\newtheorem{theorem}[set2]{Theorem}
\newtheorem{corollary}[set2]{Corollary}
\newtheorem{definition}[set2]{Definition}
\newtheorem{example}[set2]{Example}
\newtheorem{lemma}[set2]{Lemma}
\newtheorem{notation[set2]}{Notation}
\newtheorem{remark}[set2]{Remark}
\newcommand{\ep}{\hfill{$\square$}}
\newenvironment{proof}[1][Proof]{\textbf{#1.} }{\
\\}
\def\XXint#1#2#3{{\setbox0=\hbox{$#1{#2#3}{\int}$}
\vcenter{\hbox{$#2#3$}}\kern-.5\wd0}}
\newcommand{\bl}{\left(}
\newcommand{\br}{\right)}
\newcommand{\e}{\varepsilon}
\newcommand{\di}{\,\mathrm{div}}
\newcommand{\DIV}{\,\mathrm{div}}
\newcommand{\R}{\mathbb{R}}
\newcommand{\N}{\mathbb{N}}
\newcommand{\C}{\mathcal}
\newcommand{\ol}{\overline}
\newcommand{\ul}{\underline}
\newcommand{\dx}{\,\mathrm dx}
\newcommand{\dt}{\,\mathrm dt}
\newcommand{\ds}{\,\mathrm ds}
\newcommand{\dxs}{\,\mathrm dx\,\mathrm ds}
\newcommand{\CC}{\mathbf{C}}
\newcommand{\DD}{\mathbf{D}}
\newcommand{\ukb}{u_{\tau,\beta}^k}
\newcommand{\chikb}{\chi_{\tau,\beta}^k}
\newcommand{\uk}{u^k}
\newcommand{\ukk}{u^{k-1}}
\newcommand{\ukkk}{u^{k-2}}
\newcommand{\chik}{\chi^k}
\newcommand{\chikk}{\chi^{k-1}}
\newcommand{\vk}{v^k}
\newcommand{\vkk}{v^{k-1}}
\newcommand{\Rn}{\mathbb R^n}
\newcommand{\Rnn}{\mathbb R^{n\times n}}
\newcommand{\bu}{\text{\boldmath$u$}}
\newcommand{\bchi}{\text{\boldmath$\chi$}}
\newcommand{\buu}{\text{\boldmath$u$}^0}
\newcommand{\bvv}{\text{\boldmath$v$}^0}
\newcommand{\bchii}{\text{\boldmath$\chi$}^0}
\newcommand{\bb}{\text{\boldmath$b$}}
\newcommand{\bell}{\text{\boldmath$\ell$}}
\date{submitted: February 2, 2015\\updated: September 15, 2016}
\author{M.~Hassan Farshbaf-Shaker$^*$,
Christian Heinemann\footnote{Weierstrass Institute for Applied Analysis and Stochastics (WIAS), Mohrenstr. 39, 10117 Berlin (Germany)
	E-mail: \texttt{hassan.farshbaf-shaker@wias-berlin.de}
	and \texttt{christian.heinemann@wias-berlin.de}
}
\footnote{This work is partially supported by the Research Center Matheon in Berlin (Germany).}}
\begin{document}
\title{
A phase field approach for optimal boundary control of damage processes in two-dimensional viscoelastic media
}

\maketitle

\begin{abstract}
In this work we investigate a phase field model for damage processes in
two-di\-men\-sional viscoelastic media with nonhomogeneous Neumann data
describing external boundary forces.
In the first part we establish global-in-time existence, uniqueness, a priori estimates 
and continuous dependence of strong solutions on the data.
The main difficulty is caused by the irreversibility of the phase field variable, which
results in a constrained PDE system.
In the last part we consider an optimal control problem where a cost
functional penalizes maximal deviations from prescribed damage profiles.
The goal is to minimize the cost functional
with respect to
exterior forces acting on the boundary which play the role of the
control variable in the considered model.
To this end, we prove existence of minimizers and study a family of
``local'' approximations via adapted cost functionals.
\end{abstract}
{\it AMS Subject classifications:}
35A01,	
35A02,	
35D35, 
35M33, 
35M87, 
35Q74, 
49J20, 
74A45, 
74D10, 
74F99, 
74H20, 
74H25, 
74P99; 
\\[2mm]
{\it Keywords:} {damage processes, phase field model, viscoelasticity, nonlinear parabolic inclusions,
well-posedness, optimal control.  \\[2mm]
}

\section{Introduction}
	Damage phenomena in elastically deformable solids and their analytical studies have received a lot of attention in the mathematical literature,
	e.g., \cite{BSS05,FM98,FKS99,Gia05,WIAS1520,KRZ11,DT02,Mielke06,MT10,RR14}.
	Especially models which employ a phase field approach and incorporate higher-order terms
	were the focus in some recent works.
	In that case, an internal variable
	indicates the degree of structural integrity and, depending on the material and the scaling,
	may be defined as the volume or the surface density of microvoids or microcracks, respectively, as pointed out in \cite{LD05}.
	This approach has also been utilized for approximations of surface discontinuities
	occurring in the displacement field of fracture models and turned out to be very useful for
	numerical implementations (see \cite{AT90,BOS13,Gia05,PAMM11}).

	One of the main difficulty for a rigorous mathematical investigation of the underlying PDE systems is that the damage variable
	is forced to be monotonically decreasing in time (irreversibility)
	and, if possible, bounded in the unit interval.
	This kind of non-smooth evolution had motivated different concepts of weak solutions and regularization techniques
	in the literature (cf., e.g., \cite{BS04, KRZ11, Mielke06}).
	However, to the authors' best knowledge, a global-in-time well-posedness result for strong solutions with
	inhomogeneous boundary data was left open.
	Together with sufficiently strong a priori estimates
	such a result could be exploited to study optimal control problems typically arising in engineering problems
	focused on resistance against damage and failure. The following model problems with boundary control illustrate some practical examples:
	\begin{itemize}
		\item[--]
			Suppose that a workpiece is exposed to external forces during an experiment and that
			certain parameters related to those forces can be controlled.
			A control problem could be to choose optimal parameters in order to prevent further
			damage in the material.
		\item[--]
			Related to the first scenario we might
			be interested in calculating additional forces not to prevent but to redirect crack spreading to non-critical components of the structure
			and to avoid complete failure.
		\item[--]
			Another problem might be the determination of external forces in order to
			deliberately induce a damage progression.
			For instance, it might be desirable to separate certain parts of the workpiece in industrial processes.
	\end{itemize}
	
	By now, to the authors' best knowledge, the mathematical contributions addressing those and related problems are inspired by the pioneering work \cite{De89} and employ fracture models
	to control the energy release rate of a single crack in a quasi-stationary setting by optimal shape design techniques,
	fibers or applied forces (see \cite{HMO08, KLS12,LSZ15,PLSS11} for more details).
	The cracks are explicitely modeled by non-smooth domains with or without non-penetration conditions for the deformation.
	A main issue consists in determining optimal forces or inclusions in the solid in order to cease crack propagation
	or to release as much energy as possible.

	In this paper we would like to advance a different approach for such control problems by utilizing a phase field model for damage.
	The kind of model under consideration was motivated by \textsc{Fr\' emond} and \textsc{Nedjar} in \cite{FN96} and is stated below.
	Under certain structural assumptions we are able to investigate well-posedness of strong solutions and existence of optimal boundary controls
	for a coupled evolutionary system describing damage processes in viscoelastic materials in two spatial dimensions.
	This enables us to investigate optimal control problems where
	we aim to control the damage phase field variable via external boundary forces.
	The cost functional will measure maximal deviations from desired phase field profiles.
	
	
	
	In the first part of this paper we study existence and then, for constant viscosity $\mathbb D$, well-posedness of the following PDE problem:\\
	\textit{For a given time interval $(0,T)$ and reference configuration $\Omega$ with boundary $\Gamma$
	and outer unit normal $\nu$, find $(u,\chi)$ such that}
	\begin{subequations}
	\label{eqn:pdeSystem}
	\begin{align}
	\label{eqn:elasticEq}
		&u_{tt}-\di\big(\mathbb C(\chi)\e(u)+\mathbb D(\chi)\e(u_t)\big)=\ell
		&&\textit{ in }\Omega\times(0,T),\\
	\label{eqn:damageEq}
		&\chi_t-\Delta\chi_t-\Delta\chi+\xi+\frac 12 \mathbb C'(\chi)\e(u):\e(u)+f'(\chi)=0
		&&\textit{ in }\Omega\times(0,T)
	\end{align}
	\end{subequations}
	\textit{with the subgradient}
	\begin{align}
	\label{eqn:damageEq2}
		&\xi\in\partial I_{(-\infty,0]}(\chi_t)\hspace*{16em}
		&&\textit{ in }\Omega\times(0,T)
	\end{align}
	\textit{and the initial-boundary conditions}
	\begin{subequations}
	\label{eqn:IBC}
	\begin{align}
	\label{eqn:initialCond}
		&u(0)=u^0,\;u_t(0)=v^0,\;\chi(0)=\chi^0
		&&\textit{ in }\Omega,\\
	\label{eqn:boundaryEq}
		\hspace*{3.0em}&\big(\mathbb C(\chi)\e(u)+\mathbb D(\chi)\e(u_t)\big)\cdot\nu =b\hspace*{13.0em}
		&&\textit{ on }\Gamma\times(0,T),\hspace*{1.0em}\\
	\label{eqn:boundaryEq2}
		&\nabla(\chi+\chi_t)\cdot\nu =0
		&&\textit{ on }\Gamma\times(0,T).
	\end{align}
	\end{subequations}
	Equation \eqref{eqn:elasticEq} describes the balance of forces in the workpiece according to the Kelvin-Voigt rheology.
	The displacement field is denoted by $u$, the external volume forces by $\ell$, the linearized strain tensor by $\e(u)=\frac12(\nabla u+(\nabla u)^T)$ and
	the stress tensor by $\sigma=\mathbb C(\chi)\e(u)+\mathbb D(\chi)\e(u_t)$.
	The first summand of $\sigma$ contains the elastic contribution whereas the second summand models viscous effects.
	The coefficient $\mathbb C$ designates the fourth-order damage-dependent stiffness tensor and $\mathbb D$ the viscosity tensor.
	The second equation \eqref{eqn:damageEq} specifies the parabolic evolution law for the propagation of damage described
	by the variable $\chi$ under the constraint \eqref{eqn:damageEq2},
	where the subdifferential of the indicator function $I_{(-\infty,0]}:\R\to\R\cup\{\infty\}$ is given by
	\begin{align*}
		\partial I_{(-\infty,0]}(\chi_t)=
		\begin{cases}
			\{0\}&\text{if }\chi_t<0,\\
			[0,\infty)&\text{if }\chi_t=0,\\
			\emptyset&\text{if }\chi_t>0.
		\end{cases}
	\end{align*}
	The Laplacians $-\Delta\chi$ and $-\Delta\chi_t$ model diffusive effects of $\chi$ and $\chi_t$
	and have a regularizing effect from the mathematical perspective.
	For a mechanical motivation of system \eqref{eqn:pdeSystem}-\eqref{eqn:IBC} by means of balance laws and constitutive relations we refer to \cite{Fre02, Fr12, FN96}.
	In comparison to certain phase field models of damage used in the literature, the
	higher-order viscosity $-\Delta\chi_t$ is also incorporated in the damage law \eqref{eqn:damageEq} (cf. \cite{BS04, bobo2}).
	It will help us to perform the (global-in-time) \textit{Second a priori estimate} in Lemma \ref{lemma:aPrioriDiscr}
	(see also the remark after the proof of Theorem \ref{theorem:existenceCont})
	and, from the modeling point of view, it originates from an additional gradient term
	of $\chi_t$ in the dissipation potential
	corresponding to system \eqref{eqn:pdeSystem}, which is given by
	\begin{align}
		R(u_t,\chi_t)=\int_\Omega\Big(\frac{1}{2}|\chi_t|^2+\frac{1}{2}|\nabla\chi_t|^2+\frac{1}{2}\mathbb D(\chi)\e(u_t):\e(u_t)+I_{(-\infty,0]}(\chi_t)\Big)\dx.
	\end{align}
	We would like to give the following interpretation for the subgradient constraint \eqref{eqn:damageEq2}:
	
	By introducing the free energy $F$ to system \eqref{eqn:pdeSystem} as
	\begin{align}
		F(u,\chi)=\int_\Omega\Big(\frac 12|\nabla\chi|^2+\frac 12\mathbb C(\chi)\e(u):\e(u)+f(\chi)\Big)\dx,
	\label{eqn:freeEnergy}
	\end{align}
	we may rewrite \eqref{eqn:damageEq} as\vspace*{-0.3em}
	$$
		0\in\partial_{\chi_t}R(u_t,\chi_t)+d_\chi F(u,\chi)\quad\text{or, equivalently,}\quad
		\xi=-\chi_t+\Delta\chi_t-d_\chi F(u,\chi).\vspace*{-0.25em}
	$$
	with a complementarity formulation for \eqref{eqn:damageEq2}, i.e.
	$$
		\chi_t\leq 0,\quad \xi\cdot\chi_t=0,\quad \xi\geq 0.
	$$
	
	We comment that for a mechanical interpretation it would be desirable
	to force the values of the phase field variable $\chi$ to remain in the unit interval $[0,1]$.
	Principally, that can be achieved by incorporating an indicator
	function in the energy \eqref{eqn:freeEnergy} (or another suitable potential) or by using, if available, a parabolic maximum principle
	in the sense that $\chi(0)\in [0,1]$ a.e. in $\Omega$ implies $\chi(t)\in [0,1]$ a.e. in $\Omega$
	for almost all times $t\in(0,T)$.
	However, the first option would lead to the occurrence of a second subgradient $\varphi\in\partial I_{[0,1]}(\chi)$ in the
	damage equation \eqref{eqn:damageEq} which would
	possibly violate uniqueness of solutions (a behavior observed in \cite{CV90}
	for certain double inclusions; see also \cite[Remark 2.18]{RR12})
	whereas the second option leads to serious complicacies in establishing the maximum principle
	due to the regularizing term $-\Delta\chi_t$ in \eqref{eqn:damageEq}.
	If the tensors $\mathbb C(\cdot)$ and $\mathbb D(\cdot)$ are constant
	for non-positive values we can at least provide a pragmatical solution
	(see also Example \ref{example:cFunction}):
	
	To this end, let $(u,\chi)$ be a solution to the above problem
	and let the initial data satisfy $\chi^0\in[0,1]$ in $\Omega$.
	Then together with the irreversibility constraints $\chi_t\in(-\infty,0]$ we find $\chi\in (-\infty,1]$
	a.e. in $\Omega\times(0,T)$.
	Let us consider the pair $(u,\chi^+)$ with the pointwise truncation $\chi^+:=\max\{\chi,0\}$ which
	thus satisfies $\chi^+\in[0,1]$ a.e. in $\Omega\times(0,T)$.
	Now note that $(u,\chi^+)$ still fulfills the elasticity equation \eqref{eqn:elasticEq}
	in $\Omega\times(0,T)$ when replacing $\chi$ by $\chi^+$
	because $\mathbb C(\chi)=\mathbb C(\chi^+)$ and $\mathbb D(\chi)=\mathbb D(\chi^+)$ for all values $\chi$.
	The damage equation \eqref{eqn:boundaryEq} when replacing $\chi$ by $\chi^+$
	is satisfied in the subset $\{\chi >0\}\;\subseteq\;\Omega\times(0,T)$.
	In the complementary part $\{\chi \leq 0\}\;\subseteq\;\Omega\times(0,T)$ 
	the damage evolution for $\chi^+$ ceases and stays at the minimum, i.e. $\chi^+=0$ and thus (in an a.e. sense) $(\chi^+)_t=0$.

	The second part of this paper is devoted to an optimal control problem.
	A cost functional $\C J$ will measure the maximal deviation of the damage variable $\chi$ from given prescribed damage profiles
	at the final time $T$ and/or at all times in $[0,T]$ ($\lambda_Q,\lambda_\Omega,\lambda_\Sigma\geq 0$):
	\begin{align}
		\C J(\chi,b):={}&\frac{\lambda_Q}{2}\|\chi-\chi_Q\|_{L^\infty(\Omega\times(0,T))}
			+\frac{\lambda_\Omega}{2}\|\chi(T)-\chi_T\|_{L^\infty(\Omega\times(0,T))}
			+\frac{\lambda_\Sigma}{2}\|b\|_{L^2(\Gamma\times(0,T);\Rn)}^2.
	\label{eqn:introJ}
	\end{align}
	A minimizer $(\chi,b)$ of $\C J$
	under the constraint that $\chi$ solves
	system \eqref{eqn:pdeSystem}-\eqref{eqn:IBC} for some displacement $u$ and admissible
	(later specified) boundary data $b$
	indicates an evolution which approximates $\chi_Q$ and/or $\chi_T$
	best in the sense of $\C J$.
	It is also possible to replace $\chi$ by $\chi^+$ on the right-hand side of \eqref{eqn:introJ}
	in order to account only for the non-negative values of $\chi$. 
		
	
	In the following we summarize the main results of our paper:
	\begin{itemize}
		\item[--]
			In Theorem \ref{theorem:existenceCont} we will prove existence of strong solutions
			for system \eqref{eqn:pdeSystem}-\eqref{eqn:IBC} and for a so-called
			$\beta$-approximation in two spatial dimensions.
			In the latter case we replace the subgradient $\xi$ in \eqref{eqn:damageEq2} by a
			smooth approximation $\xi_\beta(\chi_t)$ with $\beta>0$.
			On the one hand this enables us to perform the a priori estimates in Lemma
			\ref{lemma:aPrioriDiscr}, while, on the other hand,
			the $\beta$-approximation might be helpful for further studies such as optimality
			systems, numerical implementations etc.
			We emphasize that the existence analysis constitutes the main part of this paper and
			strongly relies on the two-dimensional
			\textit{Ladyzhenskaya's inequality} originally devised for the
			$2D$ Navier-Stokes equations
			(see \cite{Lad58} and the calculation \eqref{eqn:GN}).
		\item[--]
			Continuous dependence on the data $(u^0,v^0,\chi^0,b,\ell)$ and, in particular,
			uniqueness of strong solutions for system \eqref{eqn:pdeSystem}-\eqref{eqn:IBC}
			are proven in Theorem \ref{theorem:contDepCont} (see also Corollary \ref{cor:uniqueness})
			under the assumption of constant viscosity $\mathbb D$.
			We also establish a priori estimates for the solutions in Corollary \ref{cor:aPriori}.
			These results allow us to define the solution operator and constitutes the fundament for
			the considered optimal control problem.
		\item[--]
			Theorem \ref{theorem:optimalControl} reveals existence to an optimal control problem
			where the cost functional penalizes deviations
			of the damage variable from given damage profiles in the $L^\infty$-norm (see
			\eqref{eqn:introJ}).
			The strong solutions of system \eqref{eqn:pdeSystem}-\eqref{eqn:IBC} will be controlled via
			external boundary forces.
			We prove existence of optimal controls by using the $\beta$-approximation in the proof of
			Theorem \ref{theorem:existenceCont} to define a family of optimal control problems.
			The minimizers or the optimal controls of the family of $\beta$-approximating control
			problems converge in a limit process (along a subsequence as $\beta\downarrow 0$) to an
			optimal control of the original control problem. In other words, we show that optimal
			controls for the family of $\beta$-approximating control problems are for some $\beta>0$
			likely to be ``close'' to optimal controls for the original control problem. It is natural
			to ask if the reverse holds, i.e., whether every optimal control for the original control
			problem can be approximated by a sequence of optimal controls of the $\beta$-approximating
			control problems. 
				Unfortunately, we will not be able to prove such a ``global result'' that applies to all
				optimal controls for the original control problem. The reason for that lies on the
				non-convexity of the optimal control problems (both the original one and the
				$\beta$-approximating control problems) and consequently on the non-uniqueness of the
				optimal controls.
				However, a ``local'' result can be established by introducing so-called adapted optimal
				control problems in Theorem \ref{theorem:adaptedOCP}.
	\end{itemize}
	Let us recall some already established results in the mathematical literature of
	phase field models for damage/gradient-of-damage models:
	\begin{itemize}
		\item[--]
			Local-in-time well-posedness of strong solutions for damage-elasticity systems with
			scalar-valued displacements, homogeneous Dirichlet conditions for the displacements
			and linear dependence of $\mathbb C$ and $\mathbb D$ on $\chi$
			is proven in \cite{BS04,BSS05} and in \cite{FKNS98,FKS99} for one-dimensional models.
			Since a degenerating elastic energy with respect to $\chi$ is considered and
			enhanced estimates are established for small times,
			the solutions are obtained locally in time.
			In the work \cite{bobo2} such rate-dependent damage-elasticity system
			with the additional second order term $\chi_{tt}$ has been
			coupled with an equation for heat conduction incorporating highly non-linear terms.
			Existence, uniqueness and
			regularity results are established locally in time.

			
		\item[--]
			Rate-independent gradient-of-damage models are explored in \cite{Mielke06} and in
			subsequent papers, e.g., \cite{Mie11, Mielke10}.
			In that case the corresponding dissipation potential $R$ is positive, convex, and positively-homogeneous
			of degree 1.
			From the modeling point of view it means that the damage progression is considered on a
			faster timescale than the acting of the external forces.
			The authors considered non-smooth domains and employed weak notions referred to as \textit{energetic formulation} in order to prove existence of solutions.
			The degenerating case where the material may loose all its elastic properties due to heavy damage is also studied.
			Further cases involving nonlinear $r$-Laplacians with $r>1$ or even $r=1$ instead of the classical Laplacian
			in the damage equation are investigated in \cite{MT10,Th13},
			where also higher temporal regularity is shown.
			
		\item[--]
			A weak notion for rate-dependent damage models coupled with Cahn-Hilliard equations was
			introduced in \cite{WIAS1520} for quasi-static balance of forces and in \cite{WIAS1919} with inertial effects and without the viscosity term in \eqref{eqn:elasticEq}.
			Existence of weak solutions is proven there for non-smooth domains and mixed-boundary conditions for the displacements
			whereas uniqueness is left open.
			
		\item[--]
			Well-posedness and vanishing viscosity results for damage models with (nonlocal) higher-order $s$-Laplacian 
			are established in \cite{KRZ11} (see also \cite{KRZ13} for vanishing viscosity
			results for damage models with a regularized nonlinear $q$-Laplacian in non-smooth settings).
			The authors study both viscous (rate-dependent)
			and, via the vanishing viscosity limit, rate-independent PDE systems for damage.
			Existence of solutions is provided in both situations
			where in the latter case
			a novel energetic formulation making use of arc-length reparameterization
			in order to describe the behavior of the system at jumps is utilized.
			In particular, the existence results cover the case of the standard Laplacian
			in two spatial dimensions.
			Furthermore, the uniqueness problem has been solved under special conditions, where
			in the case of two or three spatial dimensions
			the $s$-Laplacian is assumed to be of higher-order than the classical Laplacian.
			
		\item[--]
			Coupled thermoviscoelastic and isothermal damage models incorporating $p$-Laplacian
			operators are analyzed in \cite{RR12} (see also
			\cite{RR14} for the full heat equation including all dissipative terms and
			\cite{WIAS1927} for damage-dependent heat expansion coefficients).
			In those works homogeneous Dirichlet boundary conditions for the displacements are
			assumed.
			Uniqueness is shown in the isothermal case by adopting $p>n$ and by dropping the
			irreversibility constraint \eqref{eqn:damageEq2}.
			Existence results for the corresponding rate-independent thermoviscoelastic damage models are
			proven in the recent paper \cite{LRTT14}.
	\end{itemize}
	\textbf{Structure of the paper}\\
	Section \ref{section:wellposedness} is devoted to the well-posedness problem of system \eqref{eqn:pdeSystem}-\eqref{eqn:IBC}.
	We state the precise assumptions in Subsection \ref{sec:assumptions} and introduce time-discretized and $\beta$-regularized approximations
	of \eqref{eqn:pdeSystem}-\eqref{eqn:IBC} in Subsection \ref{section:notion}.
	The existence proofs are carried out in Subsection \ref{sec:existence} firstly for the time-discretized and then, by a limit analysis,
	for the time-continuous versions.
	In the final part of that section, i.e. in Subsection \ref{sec:contDep}, we prove continuous dependence on the initial-boundary data.
	Then, equipped with the well-posedness result, we state the announced optimal control problem in Section \ref{section:OCP}.
	We prove existence of optimal controls via $\beta$-regularization in Subsection \ref{section:existenceOCP}
	and their approximation by means of an adapted cost functional in Subsection \ref{section:adaptedOCP}.

\section{Analysis of the evolution inclusions}
\label{section:wellposedness}
	The approach presented in this work
	combines two different approximation techniques	to obtain
	existence of solutions for system \eqref{eqn:pdeSystem}-\eqref{eqn:IBC}: semi-implicit time-discretization and
	regularization of the subgradient $\xi$ in \eqref{eqn:damageEq2}.
	At first we will tackle the existence problem for the time-discrete and regularized system in Lemma \ref{lemma:existenceDiscr}.
	By passing the discretization fineness to $0$, solutions of a time-continuous regularized system are obtained in Theorem \ref{theorem:existenceCont} (i).
	In the final step, a further limit passage leads to solutions of the desired limit system (see Theorem \ref{theorem:existenceCont} (ii)).
	Then, we conclude this section in Theorem \ref{theorem:contDepCont} with a uniqueness and continuous dependence result.

\subsection{Assumptions and notation}
\label{sec:assumptions}
	Throughout this work, we adopt the following assumptions:
	\begin{enumerate}
		\item[\textbf{(A1)}]
			$\Omega\subseteq\R^n$ with $n\in\{1,2\}$ is a bounded $C^2$-domain. The boundary is denoted by
			$\Gamma$ and the outer unit normal by $\nu$.
		\item[\textbf{(A2)}]
			The damage-dependent stiffness tensor satisfies $\mathbb C(\cdot)=\mathsf{c}(\cdot)\CC$, where
			the coefficient function $\mathsf{c}$ is assumed to be
			of the form
			\begin{align*}
				\mathsf{c}=\mathsf{c}_1+\mathsf{c}_2\text{ where $\mathsf{c}_1\in C^{1,1}(\R)$ is convex and $\mathsf{c}_2\in C^{1,1}(\R)$ is concave.}
			\end{align*}
			Moreover, we assume that $\mathsf{c},\mathsf{c}_1',\mathsf{c}_2'$ are bounded and
			as well as
			\begin{align*}
				\mathsf{c}(x)\geq 0\qquad\text{ for all }x\in\R.
			\end{align*}
			
			The 4$^\mathrm{th}$ order stiffness tensor $\CC\in\C L(\R_\mathrm{sym}^{n\times n};\R_\mathrm{sym}^{n\times n})$
			is assumed to be symmetric and positive definite, i.e.
			\begin{align}
				\CC_{ijlk}=\CC_{jilk}=\CC_{lkij}\text{ and }e:\CC e\geq \eta|e|^2\text{ for all }e\in \R_\mathrm{sym}^{n\times n}
					\label{eqn:Ctensor}
			\end{align}
			with constant $\eta>0$.
		\item[\textbf{(A3)}]
			The damage-dependent viscosity tensor satisfies $\mathbb D(\cdot)=\mathsf{d}(\cdot)\DD$, where
			the coefficient function $\mathsf{d}$ satisfies $\mathsf{d}\in C^1(\R)$.
			Moreover, we assume that $\mathsf{d}$ and $\mathsf{d}'$ are bounded and
			\begin{align}
			\label{eqn:incompleteDamage}
				\text{$\mathsf{d}(x)\geq\eta>0$ for all $x\in\R$ and fixed $\eta>0$.}
			\end{align}
			The 4$^\mathrm{th}$ order tensor $\DD$ is given by $\DD=\mu\CC$, where $\mu>0$ is a constant.
		\item[\textbf{(A4)}]
			The damage-dependent potential function $f$ is assumed to be in $f\in C^{1,1}(\R)$.
	\end{enumerate}
	In the assumptions above $C^{1,1}(\R)$ should be understood as
	the space of differentiable functions on $\R$
	whose derivatives are Lipschitz continuous.
	\begin{example}
	\label{example:cFunction}
		Let $\widetilde{\mathsf{c}}\in C^{1,1}([0,1])$ be any given non-negative function
		with $(\widetilde{\mathsf{c}})'(0)=0$.
		Then there exists an extension of $\widetilde{\mathsf{c}}$ to the entire real line $\R$
		(the extension is denoted by $\mathsf{c}$) such that $\mathsf{c}(x)=\mathsf{c}(0)$ for all $x<0$ and
		assumption (A2) is satisfied for $\mathsf c$ and a given tensor $\CC$ with \eqref{eqn:Ctensor}.
	\end{example}
	\begin{proof}[Proof of Example \ref{example:cFunction}]
		We define the convex function $\widetilde{\mathsf{c}}_1$ and the concave
		function $\widetilde{\mathsf{c}}_2$ such that $\widetilde{\mathsf{c}}=\widetilde{\mathsf{c}}_1+\widetilde{\mathsf{c}}_2$
		on the compact intervall $[0,1]$ via
		\begin{align*}
			&\widetilde{\mathsf{c}}_1(x):= \widetilde{\mathsf{c}}(0)+\int_0^x\Big(\int_0^s \max\{(\widetilde{\mathsf{c}})''(\tau),0\}\,\mathrm d\tau\Big)\ds,\\
			&\widetilde{\mathsf{c}}_2(x):= \int_0^x\Big(\int_0^s \min\{(\widetilde{\mathsf{c}})''(\tau),0\}\,\mathrm d\tau\Big)\ds.
		\end{align*}
		
		To extent the functions $\widetilde{\mathsf{c}}_1$ and $\widetilde{\mathsf{c}}_2$ from the domain $[0,1]$
		to $\R$ in accordance with (A2) we have to be careful because
		the extensions should have bounded derivatives
		as well as bounded second derivatives whereas the sum of the extensions
		should also be bounded.
		We provide the following construction:
		
		Since $(\widetilde{\mathsf{c}}_1)'\geq 0$, $(\widetilde{\mathsf{c}}_1)''\geq 0$
		as well as $(\widetilde{\mathsf{c}}_2)'\leq 0$ and $(\widetilde{\mathsf{c}}_2)''\leq 0$
		on $[0,1]$ it hold
		\begin{align}
			(\widetilde{\mathsf{c}}_1)'(1)=\max_{y\in[0,1]}|(\widetilde{\mathsf{c}}_1)'(y)|=:\lambda_1,\qquad\qquad
			-(\widetilde{\mathsf{c}}_2)'(1)=\max_{y\in[0,1]}|(\widetilde{\mathsf{c}}_2)'(y)|=:\lambda_2.
				\label{eqn:convConcIdentities}
		\end{align}
		In the case $\lambda_1\leq \lambda_2$ we may extend $\widetilde{\mathsf{c}}_1$ and $\widetilde{\mathsf{c}}_2$
		to $\R$ as follows (for readers' convenience we keep the integrals explicitly):
		\begin{align*}
			&\mathsf{c}_1(x):=
			\begin{cases}
				\widetilde{\mathsf{c}}_1(0)&\text{if }x<0,\\
				\widetilde{\mathsf{c}}_1(x)&\text{if }0\leq x<1,\\
				\widetilde{\mathsf{c}}_1(1)+\int_1^x\lambda_1+(\lambda_2-\lambda_1)\frac{s-1}{\delta}\ds&\text{if }1\leq x<1+\delta,\\
				\widetilde{\mathsf{c}}_1(1)+\int_1^{1+\delta}\lambda_1+(\lambda_2-\lambda_1)\frac{s-1}{\delta}\ds+\lambda_2(x-(1+\delta))&\text{if }1+\delta\leq x,
			\end{cases}\\
			&\mathsf{c}_2(x):=
			\begin{cases}
				\widetilde{\mathsf{c}}_2(0)&\text{if }x<0,\\
				\widetilde{\mathsf{c}}_2(x)&\text{if }0\leq x<1,\\
				\widetilde{\mathsf{c}}_2(1)-\lambda_2(x-1)&\text{if }1\leq x.
			\end{cases}
		\end{align*}
		Here, $\delta>0$ can be chosen arbitrarily.
		Due to \eqref{eqn:convConcIdentities} we observe that $\mathsf{c}_1\in C^{1,1}(\R)$
		and that $\mathsf{c}_1$ is convex in the entire real line $\R$.
		Furthermore, $\mathsf{c}_2\in C^{1,1}(\R)$ is concave in $\R$.
		The particularity of this construction is that $\mathsf{c}_1$ and $\mathsf{c}_2$
		have only linear growth for large $x>0$ with derivatives $\lambda_2$ and $-\lambda_2$,
		respectively. Thus the sum $\mathsf{c}_1+\mathsf{c}_2$ is bounded.
		
		Hence we observe that $\mathsf{c}_1',\mathsf{c}_2'$ and
		$\mathsf{c}_1+\mathsf{c}_2$ are bounded
		and that the properties in (A2) are fulfilled
		for $\mathsf{c}$,	$\mathsf{c}_1$ and $\mathsf{c}_2$.		
		The case $\lambda_1>\lambda_2$ can be treated analogously.\ep
	\end{proof}
	
	\begin{remark}
	\label{remark:assumptions}
		\begin{itemize}
			\item[(i)]
				The non-degeneracy condition \eqref{eqn:incompleteDamage} prevents the material from
				complete damage, i.e., even the maximal damaged parts (the region with $\chi\leq 0$)
				exhibit small viscous properties.
			\item[(ii)]
				The assumption $\DD=\mu\CC$ in (A3) is needed
				in the proof of Lemma \ref{lemma:existenceDiscr} in step 2
				in order to perform 	a regularity argument based on a transformation.
				It has already been employed in the mathematical literature (see \cite{WIAS1927,RR14}).
		\end{itemize}
	\end{remark}

	For later use, we define the solution space $\C U\times\C X$, where $\C U$ denotes the space
	of the displacements and $\C X$ the space of the damage evolutions given by
	\begin{subequations}
	\label{eqn:spaces}
	\begin{align}
		\C U&:=H^1(0,T;H^2(\Omega;\R^n))\cap W^{1,\infty}(0,T;H^1(\Omega;\R^n))\cap H^2(0,T;L^2(\Omega;\R^n)),\\
		\C X&:=H^1(0,T;H^2(\Omega)).
	\end{align}
	\end{subequations}
	The space of boundary controls $\C B$ is defined as
	\begin{align*}
		\C B:=L^2(0,T;H^{1/2}(\Gamma;\Rn))\cap H^1(0,T;L^{2}(\Gamma;\Rn)).
	\end{align*}
	We also introduce the sets for brevity
	\begin{align*}
		Q:=\Omega\times(0,T),\qquad\qquad\Sigma:=\Gamma\times(0,T).
	\end{align*}
	Finally, let us mention that we make frequently use of the standard Young's inequality
	$$
		ab\leq \delta a^2+\frac{1}{4\delta}b^2\quad\text{ for all }a,b\in\R\text{ and all }
		\delta>0
	$$
	where $\delta>0$ will be chosen when necessary and we write $C_\delta:=\frac{1}{4\delta}$.
	Moreover, the symbols $C$, $\widetilde C$, $D$, $\widetilde \eta$ and $\delta$
	will denote positive constants throughout this work.

\subsection{Notions of solution}
\label{section:notion}
	Let us consider two approximations of system \eqref{eqn:pdeSystem}-\eqref{eqn:IBC}:
	a regularized version where the indicator function $I_{(-\infty,0]}$ in \eqref{eqn:damageEq2} is replaced
	by a suitable smooth function $I_\beta$, $\beta\in(0,1)$,
	and a time-discretized version of the regularized system.
	To this end, we introduce the following regularization:
	\begin{definition}[$\beta$-regularization]
	\label{def:regularization}
		Let the family of functions $\{I_\beta\}_{\beta\in(0,1)}\subseteq C^{1,1}(\R)$ denote a regularization of the indicator function $I_{(-\infty,0]}$
		in the following sense:
		\begin{itemize}
		\item[(i)]
			$I_{\beta_1}\leq I_{\beta_2}$ pointwise in $\R$ for every $\beta_1,\beta_2\in(0,1)$ with $\beta_1\geq \beta_2$,
		\item[(ii)]
			$I_\beta\uparrow\infty$ pointwise in $[0,\infty)$ as $\beta\downarrow 0$,
		\item[(iii)]
			$I_\beta(x)= 0$ for all $x\leq 0$ and all $\beta\in(0,1)$,
		\item[(iv)]
			$I_\beta''(x)\geq 0$ for a.e. $x\in\R$ and all $\beta\in(0,1)$.
		\end{itemize}
		We may also write $\xi_\beta:=I_\beta'$ in the following.
	\end{definition}
	\begin{remark}
		In particular, we may choose the Moreau-Yosida approximation given by (see \cite[Lemma 5.17]{Rou13})
		$$
			I_\beta(x)=\inf_{y\in\R}\bigg(\frac{|x-y|^2}{2\beta}+I_{(-\infty,0]}(y)\bigg)
			=
			\begin{cases}
				0&\text{if }x\leq 0,\\
				\frac{1}{2\beta}x^2&\text{if }x> 0.
			\end{cases}
		$$
		Let us mention that also $C^\infty$-approximations may be chosen for $\{I_\beta\}$
		especially in view of optimality systems for optimal control problems (see \cite[Chapter 5]{NT94}).
	\end{remark}
	\begin{definition}[Strong solutions]
	\label{def:notionSolution}
		For system \eqref{eqn:pdeSystem}-\eqref{eqn:IBC} and their approximations
		we introduce the following notion of solutions:
		\begin{enumerate}
			\item[(i)]\textbf{Time-continuous limit system ($\tau=0,\beta=0$).}\\
				Let the data $(u^0,v^0,\chi^0,b,\ell)$ be given.
				A solution of the time-continuous limit system is a pair of functions $(u,\chi)\in\C U\times \C X$
				safisfying \eqref{eqn:pdeSystem}-\eqref{eqn:IBC} in an a.e. sense
				and for a subgradient $\xi\in L^2(Q)$.
			\item[(ii)]\textbf{Time-continuous $\beta$-regularized system ($\tau=0,\beta>0$).}\\
				Let the data $(u^0,v^0,\chi^0,b,\ell)$ be given.
				A solution of the time-continuous $\beta$-regularized system is a pair of functions
				$(u,\chi)\in\C U\times \C X$
				with $u(0)=u^0$, $\partial_t u(0)=v^0$ and $\chi(0)=\chi^0$
				such that
				\begin{subequations}
				\begin{align}
				\label{eqn:elasticRegEq}
					&u_{tt}-\di\big(\mathbb C(\chi)\e(u)+\mathbb D(\chi)\e(u_t)\big)=\ell
					&&\text{a.e. in }Q,\\
				\label{eqn:damageRegEq}
					&\chi_t-\Delta\chi_t-\Delta\chi+\xi_\beta(\chi_t)+\frac 12 \mathbb C'(\chi)\e(u):\e(u)+f'(\chi)=0\hspace*{-0.5em}
					&&\text{a.e. in }Q,\\
				\label{eqn:boundaryRegEq}
					&\big(\mathbb C(\chi)\e(u)+\mathbb D(\chi)\e(u_t)\big)\cdot \nu  = b
					&&\text{a.e. on }\Sigma,\\
				\label{eqn:boundaryRegEq2}
					&\nabla(\chi+\chi_t)\cdot \nu =0
					&&\text{a.e. on }\Sigma.
				\end{align}
				\end{subequations}
			\item[(iii)]\textbf{Time-discrete $\beta$-regularized system ($\tau,\beta>0$).}\\
				Let $\{0,\tau,2\tau,\ldots,T\}$ denote an equidistant partition of $[0,T]$
				with discretization fineness $\tau:=T/M$ and $M\in\N$.
				Furthermore, let the data $(u^{0},u^{-1},\chi^0)$,
				$\{b^k\}_{k=0,\ldots,M}$ as well as $\{\ell^k\}_{k=0,\ldots,M}$ be given.
				A solution of the time-discrete $\beta$-regularized system is a sequence
				$\{u^k,\chi^k\}_{k=0,\ldots,M}$ of functions
				$u^k\in H^2(\Omega;\R^n)$ and $\chi^k\in H^2(\Omega)$ such that
				\begin{subequations}
				\begin{align}
					&\frac{u^k-2u^{k-1}+u^{k-2}}{\tau^2}-\di\bigg(\mathbb C(\chi^k)\e(u^k)
						+\mathbb D(\chi^k)\e\Big(\frac{u^k-u^{k-1}}{\tau}\Big)\bigg)=\ell^k
				\label{eqn:disc1}
					&&\text{a.e. in }\Omega,\\
					&\frac{\chi^k-\chi^{k-1}}{\tau}-\Delta\frac{\chi^k-\chi^{k-1}}{\tau}-\Delta\chi^k+\xi_\beta\Big(\frac{\chi^k-\chi^{k-1}}{\tau}\Big)\notag\\
				\label{eqn:disc2}
					&\qquad+\frac 12\big(\mathsf{c}_1'(\chi^k)+\mathsf{c}_2'(\chi^{k-1})\big)\CC\e(u^{k-1}):\e(u^{k-1})+f'(\chi^k)=0
					&&\text{a.e. in }\Omega,\\
				\label{eqn:disc4}
					&\Big(\mathbb C(\chi^k)\e(u^k)+\mathbb D(\chi^k)\e\Big(\frac{u^k-u^{k-1}}{\tau}\Big)\Big)\cdot \nu =b^k
					&&\text{a.e. on }\Gamma,\\
				\label{eqn:disc5}
					&\nabla\Big(\chi^k+\frac{\chik-\chi^{k-1}}{\tau}\Big)\cdot \nu =0
					&&\text{a.e. on }\Gamma
				\end{align}
				\end{subequations}
				for all $k=1,\ldots,M$,
				where $\mathsf{c}=\mathsf{c}_1+\mathsf{c}_2$ denotes the convex-concave decomposition from (A2).
		\end{enumerate}
	\end{definition}
	\begin{remark}
		If we assume $\nabla\chi^0\cdot \nu =0$ a.e. on $\Gamma$ we even obtain
		\begin{align}
			\label{eqn:cont5a}
			&\nabla\chi\cdot \nu =\nabla \chi_t\cdot \nu =0\quad\text{a.e. on }\Sigma
		\end{align}
		instead of \eqref{eqn:boundaryEq2} or \eqref{eqn:boundaryRegEq2} and for all $k=1,\ldots,M$
		\begin{align}
			\label{eqn:disc5a}
			&\nabla\chi^k\cdot \nu =\nabla\frac{\chi^k-\chi^{k-1}}{\tau}\cdot \nu =0\quad\text{a.e. on }\Gamma
		\end{align}
		instead of \eqref{eqn:disc5}.
	\end{remark}

\subsection{Existence of solutions}
\label{sec:existence}
\subsubsection{Existence for the time-discrete regularized system}
	At first we are going to show existence of time-discrete solution
	according to Definition \ref{def:notionSolution} (iii).
	Let $\tau>0$ and $\beta>0$.
	To enhance readability, we will mostly omit the subscripts $\tau$ and $\beta$ in
	$\ukb$ and $\chikb$.
	
	
	\begin{lemma}
	\label{lemma:existenceDiscr}
		Let the data $u^{0},v^0\in H^2(\Omega;\R^n)$, $\chi^0\in H^2(\Omega)$,
		$b^k\in H^{1/2}(\Gamma;\R^n)$ and $\ell^k\in L^2(\Omega;\R^n)$ for $k=0,\ldots, M$ be given.
		Then, there exists a strong solution $\{u^k,\chi^k\}_{k=0,\ldots,M}$ of the time-discrete system in the sense of Definition \ref{def:notionSolution} (iii).
	\end{lemma}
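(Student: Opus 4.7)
The plan is to prove existence by induction on $k=1,\ldots,M$, exploiting a crucial decoupling built into the semi-implicit scheme: the damage equation \eqref{eqn:disc2} involves only $\e(u^{k-1})$ and uses the convex-concave splitting $\mathsf{c}_1'(\chi^k)+\mathsf{c}_2'(\chi^{k-1})$, so $u^k$ does not enter \eqref{eqn:disc2}. At each step I would therefore first solve the scalar nonlinear damage equation for $\chi^k$, and then, with $\chi^k$ in hand, the resulting linear elasticity system for $u^k$.

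For the damage equation I would recognize \eqref{eqn:disc2}, \eqref{eqn:disc5} as the Euler-Lagrange system of the functional
\begin{align*}
J_k(\chi)=\int_\Omega\Big[&\tfrac{(\chi-\chi^{k-1})^2}{2\tau}+\tfrac{|\nabla(\chi-\chi^{k-1})|^2}{2\tau}+\tfrac{|\nabla\chi|^2}{2}+\tau I_\beta\bigl(\tfrac{\chi-\chi^{k-1}}{\tau}\bigr)\\
&+\tfrac{1}{2}\mathsf{c}_1(\chi)\CC\e(u^{k-1}):\e(u^{k-1})+\tfrac{1}{2}\mathsf{c}_2'(\chi^{k-1})\chi\,\CC\e(u^{k-1}):\e(u^{k-1})+f(\chi)\Big]\dx
\end{align*}
on $H^1(\Omega)$ and apply the direct method. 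Coercivity comes from the two gradient penalties and the $L^2$-term, while the non-convex parts contributed by $f$ and the linear $\mathsf{c}_2'$-term (with at most quadratic growth, thanks to the $C^{1,1}$-assumption) are absorbed via Young's inequality; weak lower semi-continuity follows from convexity of the principal part and Rellich compactness for the continuous lower-order terms. A minimizer $\chi^k\in H^1(\Omega)$ then provides a weak solution. To upgrade regularity, I would rewrite \eqref{eqn:disc2} as a Neumann problem $-(1+\tau)\Delta\chi^k+\chi^k=g_k$ and note that $g_k\in L^2(\Omega)$: the two-dimensional embedding $H^2\hookrightarrow W^{1,p}$ for every $p<\infty$ gives $\CC\e(u^{k-1}):\e(u^{k-1})\in L^p$, while $f'$, $\mathsf{c}_i'$ and $\xi_\beta$ are globally Lipschitz. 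Standard elliptic $H^2$-regularity on the $C^2$-domain then delivers $\chi^k\in H^2(\Omega)$.

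With $\chi^k\in H^2(\Omega)$ secured, \eqref{eqn:disc1}, \eqref{eqn:disc4} become the linear boundary value problem
$$\tfrac{1}{\tau^2}u^k-\di\bigl(a^k\CC\e(u^k)\bigr)=\tilde\ell^k\quad\text{in }\Omega,\qquad a^k\CC\e(u^k)\cdot\nu=\tilde b^k\quad\text{on }\Gamma,$$
where $a^k:=\mathsf{c}(\chi^k)+\mu\tau^{-1}\mathsf{d}(\chi^k)$ and $\tilde\ell^k$, $\tilde b^k$ depend explicitly on the previous iterates and the given data. Assumption (A3) yields $a^k\geq \mu\eta/\tau>0$, so the positive-definiteness of $\CC$ combined with Korn's inequality gives coercivity of the associated bilinear form on $H^1(\Omega;\Rn)$, and the Lax-Milgram lemma produces a unique weak solution $u^k\in H^1(\Omega;\Rn)$.

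The main obstacle I anticipate is the $H^2$-regularity of $u^k$. In two space dimensions, $\chi^k\in H^2$ embeds only into $C^{0,\alpha}(\overline\Omega)$ rather than $W^{1,\infty}$, so the variable coefficient $a^k$ fails to be Lipschitz, which is the usual hypothesis behind classical $H^2$-estimates for second-order elliptic systems. This is exactly where the structural assumption $\DD=\mu\CC$ becomes essential: it guarantees that the principal part is the scalar multiple $a^k\CC\e(\cdot)$ of the \emph{single} fixed tensor $\CC$. After local flattening of the boundary and the transformation alluded to in Remark \ref{remark:assumptions} (ii) (which normalizes the scalar coefficient so that the principal part reduces to a constant-coefficient Lam\'e operator modulo a lower-order perturbation with $W^{1,p}$ coefficients for every $p<\infty$), Agmon-Douglis-Nirenberg regularity for constant-coefficient elliptic systems applies and yields $u^k\in H^2(\Omega;\Rn)$, completing the inductive step.
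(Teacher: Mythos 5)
Your overall strategy is exactly the paper's: minimize the convex--concave splitted functional to get $\chi^k$, use elliptic regularity for the Neumann problem to get $\chi^k\in H^2(\Omega)$, then solve the linear elasticity step by Lax--Milgram and exploit $\DD=\mu\CC$ to reduce the principal part to the fixed constant tensor $\CC$ (the paper implements this by testing with $\zeta=(\tau^2\mathsf{c}(\chi^k)+\tau\mu\mathsf{d}(\chi^k))^{-1}\varphi$ rather than by flattening the boundary, but the idea is the same). The damage step and the Lax--Milgram step are fine.

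The gap is in your last step, the $H^2$-regularity of $u^k$. After dividing out the scalar coefficient $a^k$, the ``lower-order perturbation'' that lands on the right-hand side contains the term $\nabla a^k\cdot\CC\e(u^k)$ (equivalently $\mathsf{c}'(\chi^k)\nabla\chi^k\cdot\CC\e(u^k)$ etc.). At this stage you only know $u^k\in H^1(\Omega;\Rn)$, so $\e(u^k)\in L^2$, while $\nabla\chi^k\in L^q$ for every $q<\infty$ but \emph{not} in $L^\infty$ (in 2D, $\chi^k\in H^2$ only gives $C^{0,\alpha}$, as you yourself note). Hence this term is only in $L^p(\Omega;\Rn)$ for $p<2$, and an $L^2$-based ADN/$H^2$ estimate for the constant-coefficient system cannot be invoked directly: the assertion ``ADN applies and yields $u^k\in H^2$'' is circular, since it presupposes integrability of $\e(u^k)$ better than $L^2$. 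What is needed — and what constitutes the core of the paper's Step~2 — is an intermediate bootstrap: the paper observes that the right-hand side defines a functional in $H^{2-s}(\Omega;\Rn)'$ for $s\in(1,3/2)$, applies a lower-order fractional regularity result to get $u^k\in H^s(\Omega;\Rn)$, which by Sobolev embedding upgrades $\e(u^k)$ to $L^{2^*}$ with $2^*>2$; only then is the right-hand side in $L^2$ and the $H^2$-regularity theorem applicable. (An equivalent repair in your framework: first conclude $u^k\in W^{2,p}$ for $p<2$ from the $L^p$ data, deduce $\e(u^k)\in L^q$ for all $q<\infty$ by embedding, and then rerun the $L^2$ estimate.) Without this intermediate step your regularity argument does not close; with it, your proof coincides with the paper's.
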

	\begin{proof}
		Starting from the initial values $(u^0,u^{-1},\chi^0)$ with $u^{-1}:=u^0-\tau v^0$ we are
		going to construct $\{u^k,\chi^k\}_{k=0,\ldots,M}$ by a recursive procedure.
		To this end, we decouple the discrete PDE problem into two distinct elliptic problems
		such that $\chik$ is obtained from $\chikk$ and $\ukk$,
		while $\uk$ is gained from $\ukk$, $\ukkk$, $\chik$, $b^k$ and $\ell^k$.
		\vspace*{0.5em}\\\textit{\underline{Step 1:} establishing equations \eqref{eqn:disc2}
		and \eqref{eqn:disc5}}\vspace*{0.5em}\\
		Let us define the functional $\C F:H^1(\Omega)\to\R$ by
		\begin{align*}
			\C F(\chi):={}&\int_\Omega\Big(\frac12|\nabla\chi|^2
				+\frac 12 \mathsf{c}_1(\chi)\CC\e(\ukk):\e(\ukk)
				+\frac 12 \mathsf{c}_2'(\chikk)\chi\CC\e(\ukk):\e(\ukk)\Big)\dx\\
			&+\int_\Omega\Big(f(\chi)+\tau I_\beta\big((\chi-\chikk)\tau^{-1}\big)\Big)\dx+\frac{\tau}{2}\int_\Omega\left|(\chi-\chikk)\tau^{-1}\right|^2\dx\\
			&+\frac{\tau}{2}\int_\Omega\left|\nabla (\chi-\chikk)\tau^{-1}\right|^2\dx
		\end{align*}
		By the direct method in the calculus of variations, we obtain the existence of a minimizer
		of $\C F$, which will be denoted by $\chi^k$.
		
		The Euler-Lagrange equation for the minimizer yields \eqref{eqn:disc2} in a weak form.
		By noticing that \eqref{eqn:disc2} is a elliptic equation for $\chik$ with right hand side in $L^2(\Omega)$,
		we conclude $\chik\in H^2(\Omega)$ by elliptic regularity results for Neumann problems
		(see, e.g. \cite[Theorem 2.4.2.7]{Gr85} and remember that $\Gamma$ is a $C^2$-boundary by Assumption (A1)).
		\vspace*{0.5em}\\\textit{\underline{Step 2:} establishing equations
		\eqref{eqn:disc1} and \eqref{eqn:disc4}}\vspace*{0.5em}\\
		Given the functions $\chik\in H^2(\Omega)$, $\ukk,\ukkk\in H^2(\Omega;\Rn)$, 
		$b^k\in H^{1/2}(\Gamma;\Rn)$ and $\ell^k\in L^2(\Omega;\Rn)$, we obtain a unique weak
		solution $\uk\in H^1(\Omega;\Rn)$ of the linear elliptic system \eqref{eqn:disc1}
		via the well-known Lax-Milgram theorem (remember the assumption $\DD=\mu\CC$ from (A3)):
		\begin{align}
			&\int_\Omega\Big(\big(\tau^2 \mathsf{c}(\chik)+\tau \mathsf{d}(\chik)\mu \big)\CC\e(u):\e(\zeta)+u\cdot\zeta\Big)\dx\notag\\
		\label{eqn:ellipticUSystem}
			&\qquad=\int_\Omega\bl\tau^2\ell^k-\tau\di\big( \mathsf{d}(\chik)\DD\e(\ukk)\big)+2\ukk-\ukkk\br\cdot\zeta\dx
				+\int_\Gamma b^k\cdot\zeta\dx
		\end{align}
		holding for all $\zeta\in H^1(\Omega;\R^n)$.
		
		Now we use a modification of the regularity argument in
		\cite[Proof of Lemma 4.1]{WIAS1927} and make use of the $C^2$-regularity of $\Gamma$ (see (A1)):\\
		If we consider the test-function $\zeta=\big(\tau^2 \mathsf{c}(\chik)+\tau \mathsf{d}(\chik)\mu \big)^{-1}\varphi$
		where $\varphi\in H^1(\Omega;\R^n)$ is another test-function (remember that $\mathsf{c}\geq 0$ and $\mathsf{d}\geq\eta>0$, see (A2)-(A3))
		the linear elliptic system \eqref{eqn:ellipticUSystem} rewrites as
		\begin{align}
		\label{eqn:elasticSystem}
			\mathbf a(u^k,\varphi)= \langle \mathbf q, \varphi \rangle_{H^1},\qquad \varphi\in H^{1}(\Omega;\Rn).
		\end{align}
		with the bilinear form
		$$
			\mathbf a(u,\varphi):=\int_\Omega\CC\e(u):\e(\varphi)\dx
		$$
		and the right hand side $\mathbf q\in H^1(\Omega;\Rn)'$ given by
		$$
			\langle \mathbf q, \varphi \rangle_{H^1}
				:=\int_\Omega R\cdot\varphi\dx+\int_\Gamma N\cdot\varphi\dx,
		$$
		where $R$ and $N$ are defined as
		\begin{align*}
			&R:=\frac{\tau^2 \mathsf{c}'(\chik)+\tau\mu \mathsf{d}'(\chik)}{\tau^2 \mathsf{c}(\chik)+\tau \mathsf{d}(\chik)\mu }\nabla\chik\cdot\CC\e(\uk)
				+\frac{\tau^2\ell^k-\tau\di\bl \mathsf{d}(\chi^k)\DD\e(\ukk)\br-\uk+2\ukk-\ukkk}{\tau^2 \mathsf{c}(\chik)+\tau \mathsf{d}(\chik)\mu},\\
			&N:=\frac{b^k}{\tau^2 \mathsf{c}(\chik)+\tau \mathsf{d}(\chik)\mu}.
		\end{align*}
		Note that $N\in H^{1/2}(\Gamma;\Rn)$ and $R\in L^{p}(\Omega;\Rn)$ for all $p\in(1,2)$,
		since $\e(\uk)\in L^2(\Omega;\Rnn)$ and $\nabla\chik\in L^q(\Omega;\Rn)$ for all $q\in[1,+\infty)$.
		
		In particular, $\mathbf q\in H^{2-s}(\Omega;\Rn)'$ for all $s\in (1,3/2)$.
		We gain $\uk\in H^s(\Omega;\Rn)$ by applying the lower Sobolev $H^{s}$-regularity result from
		\cite[Theorem 3.4.5 (ii)]{CDN10}.
		This, in turn, implies $\e(\uk)\in L^{2^*}(\Omega;\Rn)$  with the \textit{fractional critical exponent} given in this case by
		$2^*=\frac{2n}{n-(s-1)2}>2$ (see, e.g., \cite[Theorem 6.7]{DPV11}). We obtain $R\in L^2(\Omega;\Rn)$.
		The $H^{2}$-regularity result \cite[Theorem 3.4.1]{CDN10} applied to the linear elliptic system \eqref{eqn:elasticSystem} shows
		$\uk\in H^2(\Omega;\Rn)$.
		Thus \eqref{eqn:disc1} is shown.
		\ep
	\end{proof}
	
	

	
\subsubsection{Existence result for the time-continuous system}
	The aim of this section is to provide existence of strong solutions in the sense of
	Definition \ref{def:notionSolution} (i) and (ii).
	To this end, several a priori estimates for the time-discrete solutions will be established.
	The estimates will be used for the time-continuous limit analysis and for the optimal control problem in Section
	\ref{section:OCP}.
	
	We assume that the initial data $(u^0,v^0,\chi^0)$ satisfy
	\begin{subequations}
	\label{eqn:initialData}
	\begin{align}
		&u^0\in H^2(\Omega;\Rn),\\
		&v^0\in H^1(\Omega;\Rn),\\
		&\chi^0\in H_N^2(\Omega):=\big\{v\in H^2(\Omega)\,|\,\nabla v\cdot \nu=0\text{ a.e. on }\Gamma\big\}
	\end{align}
	\end{subequations}
	and the external forces $(b,\ell)$ are assumed to be in the following spaces:
	\begin{align}
		b\in L^{2}(0,T;H^{1/2}(\Gamma;\Rn))\cap H^{1}(0,T;L^{2}(\Gamma;\Rn)),\qquad\ell\in L^{2}(0,T;L^2(\Omega;\Rn)).
	\label{eqn:externalForces}
	\end{align}
	For the moment, let us consider some approximations
	\begin{align*}
		&\{v^0_\lambda\}_{\lambda\in(0,1)}\subseteq H^2(\Omega;\Rn),\\
		&\{b_\lambda\}_{\lambda\in(0,1)}\subseteq C^{1,1}(0,T;H^{1/2}(\Gamma;\Rn),\\
		&\{\ell_\lambda\}_{\lambda\in(0,1)}\subseteq C^{0,1}(0,T;L^2(\Omega;\Rn))
	\end{align*}
	of the the initial velocity $v^0$ and the external forces $b$ and $\ell$ such that (e.g. construction via convolution)
	\begin{subequations}
	\label{eqn:ellBConv}
	\begin{align}
		&v_\lambda^0\to v^0\;\,\text{strongly in }H^1(\Omega;\Rn),\\
		&b_\lambda\to b\quad\text{strongly in }L^{2}(0,T;H^{1/2}(\Gamma;\Rn))\cap H^{1}(0,T;L^{2}(\Gamma;\Rn)),\\
		&\ell_\lambda\to \ell\quad\text{strongly in }L^2(0,T;L^2(\Omega;\Rn))
	\end{align}
	\end{subequations}
	as $\lambda\downarrow 0$.
	Let us define the time-discretizations $b_{\tau,\lambda}^k$ and $\ell_{\tau,\lambda}^k$ by
	\begin{align*}
		b_{\tau,\lambda}^k:=b_\lambda(\tau k),
		\qquad\qquad
		\ell_{\tau,\lambda}^k:=\ell_\lambda(\tau k).
	\end{align*}
	For a sequence $\{h^k\}_{k=0,\ldots,M}$ where
	$h^k\in\{u_{\tau,\beta}^k,\chi_{\tau,\beta}^k,b_{\tau,\lambda}^k,\ell_{\tau,\lambda}^k\}$, we define
	the piecewise constant and linear interpolation as
	\begin{align}
	\begin{split}
	\label{eqn:interpolation}
		\left.
		\begin{matrix}
		&\displaystyle\ol{h}(t):=h^k,\qquad \ul{h}(t):=h^{k-1},\qquad \ul{\ul{h}}(t):=h^{k-2},\\\\
		&\displaystyle h(t):=\frac{t-(k-1)\tau}{\tau}h^k+\frac{k\tau-t}{\tau}h^{k-1}
		\end{matrix}
		\right\}
		\qquad\text{for $t\in((k-1)\tau,k\tau]$.}
	\end{split}
	\end{align}
	The left-continuous and right-continuous piecewise constant interpolation for a
	given time point $t$ is denoted by
	\begin{align*}
		&\ol t_\tau:=\tau k\quad\text{ for }\tau(k-1)<t\leq\tau k,\\
		&\ul t_\tau:=\tau k\quad\text{ for }\tau k\leq t<\tau(k+1).
	\end{align*}
	For notational convenience, we define
	the time-discrete velocity field and their interpolations by
	\begin{align}
		v_{\tau,\beta}^k:=\frac{u_{\tau,\beta}^k-u_{\tau,\beta}^{k-1}}{\tau}\text{ for }k=0,\ldots,M\qquad\text{and}\qquad
		\ol v_{\tau,\beta},\ul v_{\tau,\beta},v_{\tau,\beta}\text{ by \eqref{eqn:interpolation}}.
	\label{eqn:velocities}
	\end{align}
	As a first result, we prove convergence of the discretizations of the given data.
	\begin{lemma}
	\label{lemma:dataConv}
		There exist subsequences $\tau_k\downarrow0$ and $\lambda_k\downarrow0$ as $k\uparrow \infty$ such that
		\begin{align*}
			v_{\tau_k}^0&\to v^0\;\;\text{strongly in }H^{1}(\Omega;\Rn),\\
			b_{\tau_k,\lambda_k}&\to b\quad\text{strongly in }L^{2}(0,T;H^{1/2}(\Gamma;\Rn))\cap H^{1}(0,T;L^{2}(\Gamma;\Rn)),\\
			\ol\ell_{\tau_k,\lambda_k}&\to \ell\quad\text{strongly in }L^2(0,T;L^2(\Omega;\Rn))
		\end{align*}
		as $k\uparrow\infty$.
		For readers' convenience we set $b_{\tau_k}:=b_{\tau_k,\lambda_k}$ and
		$\ol\ell_{\tau_k}:=\ol\ell_{\tau_k,\lambda_k}$ and
		omit the subscript $k$.
		Then the statement above reads as		
		$v_\tau^0\to v^0$,  $b_{\tau}\to b$ and $\ol\ell_{\tau}\to \ell$ as $\tau\downarrow 0$.
	\end{lemma}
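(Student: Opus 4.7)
The plan is a two-step argument: first, estimate for each fixed regularization parameter $\lambda$ the error made by the piecewise interpolations as $\tau\downarrow 0$, and then combine this with the already-given convergence $(v_\lambda^0,b_\lambda,\ell_\lambda)\to(v^0,b,\ell)$ via a standard diagonal selection. Since there is no genuine time-discretization on $v^0$, the convergence of $v^0_\lambda\to v^0$ in $H^1(\Omega;\R^n)$ is directly available from \eqref{eqn:ellBConv}.

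First I would show that for fixed $\lambda\in(0,1)$, the interpolations $b_{\tau,\lambda}$ (defined from $b^k_{\tau,\lambda}=b_\lambda(\tau k)$ via \eqref{eqn:interpolation}) converge to $b_\lambda$ in $L^2(0,T;H^{1/2}(\Gamma;\R^n))\cap H^1(0,T;L^2(\Gamma;\R^n))$ as $\tau\downarrow 0$. The $L^2(0,T;H^{1/2})$-convergence of the piecewise constant part follows from uniform continuity of $t\mapsto b_\lambda(t)\in H^{1/2}(\Gamma;\R^n)$ (which holds since $b_\lambda\in C^{1,1}(0,T;H^{1/2})$), bounding $\|b_{\tau,\lambda}(t)-b_\lambda(t)\|_{H^{1/2}}\leq\tau\|b_\lambda\|_{C^1(0,T;H^{1/2})}$ pointwise. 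For the $H^1(0,T;L^2)$-convergence of the piecewise linear interpolation, one uses that $\partial_t b_{\tau,\lambda}(t)=(b_\lambda(k\tau)-b_\lambda((k-1)\tau))/\tau$ on each $((k-1)\tau,k\tau)$ is a difference quotient of $b_\lambda\in C^{1,1}(0,T;H^{1/2})\hookrightarrow W^{1,\infty}(0,T;L^2(\Gamma;\R^n))$, hence converges uniformly to $\partial_t b_\lambda$ as $\tau\downarrow 0$. The analogous argument for the piecewise constant interpolation $\ol\ell_{\tau,\lambda}\to\ell_\lambda$ in $L^2(0,T;L^2(\Omega;\R^n))$ uses only the (uniform) continuity of $\ell_\lambda\in C^{0,1}(0,T;L^2)$ in time.

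Second, I combine these with the given convergences in \eqref{eqn:ellBConv} by a diagonal extraction. For every $k\in\N$, choose $\lambda_k\in(0,1/k)$ small enough so that
\[
\|v_{\lambda_k}^0-v^0\|_{H^1}+\|b_{\lambda_k}-b\|_{L^2(0,T;H^{1/2})}+\|b_{\lambda_k}-b\|_{H^1(0,T;L^2)}+\|\ell_{\lambda_k}-\ell\|_{L^2(0,T;L^2)}\leq \tfrac{1}{2k},
\]
which is possible by \eqref{eqn:ellBConv}. Then, using step one applied to the fixed $\lambda_k$, choose $\tau_k\in(0,1/k)$ small enough so that
\[
\|b_{\tau_k,\lambda_k}-b_{\lambda_k}\|_{L^2(0,T;H^{1/2})}+\|b_{\tau_k,\lambda_k}-b_{\lambda_k}\|_{H^1(0,T;L^2)}+\|\ol\ell_{\tau_k,\lambda_k}-\ell_{\lambda_k}\|_{L^2(0,T;L^2)}\leq\tfrac{1}{2k}.
\]
A triangle-inequality argument then yields the three desired convergences as $k\uparrow\infty$; setting $v_{\tau_k}^0:=v_{\lambda_k}^0$, $b_{\tau_k}:=b_{\tau_k,\lambda_k}$, $\ol\ell_{\tau_k}:=\ol\ell_{\tau_k,\lambda_k}$ reproduces the notation of the lemma.

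I do not expect a real obstacle here; the argument is essentially a diagonal procedure applied to two independent limits. The only care point is that the statement mixes a piecewise linear interpolation (for the $H^1$-in-time norm of $b$) with a piecewise constant one (for $\ol\ell$), so one has to be mindful to apply step one to the correct interpolation for each target space. Everything else reduces to the fact that interpolations of smooth enough functions converge in their natural norms at a rate controlled by $\tau$, which is what makes the diagonal selection above work.
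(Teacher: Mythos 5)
Your proposal is correct and follows essentially the same route as the paper: for fixed $\lambda$ one bounds the interpolation error via the Lipschitz continuity of $b_\lambda$, $\partial_t b_\lambda$ and $\ell_\lambda$ (the paper does exactly this, including the difference-quotient estimate for $\partial_t b_{\tau,\lambda}$ giving an $O(\tau)$ rate), and then combines this with \eqref{eqn:ellBConv} through an ``$\varepsilon/2$''-type diagonal selection of $\lambda_k$ and $\tau_k$. No gap to report.
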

	\begin{proof}
		For every fixed $\lambda>0$, we find
		\begin{subequations}
		\label{eqn:ellBConv2}
		\begin{align}
		\label{eqn:bConv2}
			&b_{\tau,\lambda}\to b_\lambda\quad\text{strongly in }L^{2}(0,T;H^{1/2}(\Gamma;\Rn))\cap H^{1}(0,T;L^{2}(\Gamma;\Rn)),\\
		\label{eqn:ellConv2}
			&\ol\ell_{\tau,\lambda}\to \ell_\lambda\quad\text{strongly in }L^2(0,T;L^2(\Omega;\Rn))
		\end{align}
		\end{subequations}
		as $\tau\downarrow 0$.
		Indeed, the first convergence in \eqref{eqn:bConv2} follows by exploiting the Lipschitz continuity
		of $b_\lambda\in C^{0,1}(0,T;H^{1/2}(\Gamma;\Rn))$.
		Property \eqref{eqn:ellConv2} can be proven with a similar argument.
		The convergence $b_{\tau,\lambda}\to b_\lambda$ in the $H^{1}(0,T;L^{2}(\Gamma;\Rn))$-norm follows by the fundamental theorem of calculus for $X$-valued
		functions where $X:=L^{2}(\Gamma;\Rn)$
		and by the Lipschitz continuity of $\partial_t b_\lambda\in C^{0,1}(0,T;X)$:
		\begin{align*}
			\int_0^T\Big\|\partial_t b_{\tau,\lambda}(t)-\partial_t b_\lambda(t)\Big\|_{X}^2\dt
			={}&\int_0^T\Big\|\frac{b_{\lambda}(\ol t_\tau)-b_{\lambda}(\ul t_\tau)}{\tau}-\partial_t b_\lambda(t)\Big\|_{X}^2\dt\\
			={}&\int_0^T\Big\|\frac{1}{\tau}\int_{\ul t_\tau}^{\ol t_\tau}\big(\partial_t b_{\lambda}(s)-\partial_t b_\lambda(t)\big)\ds\Big\|_{X}^2\dt\\
			\leq{}&\int_0^T\Big(\frac{1}{\tau}\int_{\ul t_\tau}^{\ol t_\tau}\big\|\partial_t b_{\lambda}(s)-\partial_t b_\lambda(t)\big\|_X\ds\Big)^2\dt\\
			\leq{}&C\int_0^T\Big(\frac{1}{\tau}\int_{\ul t_\tau}^{\ol t_\tau}|s-t|\ds\Big)^2\dt\\
			\leq{}&CT\tau^2.
		\end{align*}
		The claim follows by using the convergences \eqref{eqn:ellBConv},
		\eqref{eqn:ellBConv2} and the following ``$\varepsilon/2$''-argument:
		
		For a given $\varepsilon>0$ we may choose a small $\lambda>0$ such that
		$\|b-b_\lambda\|<\varepsilon/2$ due to \eqref{eqn:ellBConv}.
		For every such $\lambda$ we choose a small $\tau>0$ such that
		$\|b_\lambda-b_{\tau,\lambda}\|<\varepsilon/2$ due to \eqref{eqn:ellBConv2}.
		In consequence we find for every $\varepsilon>0$ small values
		$\lambda>0$ and $\tau>0$ such that
		$\|b-b_{\tau,\lambda}\|<\varepsilon$.
		\ep
	\end{proof}\\
	\begin{lemma}[A~priori estimates for the time-discrete system]
	\label{lemma:aPrioriDiscr}
	The following a~priori estimates hold for strong solutions of the time-discrete system given in Definition \ref{def:notionSolution} (iii)
	(recall that \eqref{eqn:velocities} implies $\ol v_{\tau,\beta}=\partial_t u_{\tau,\beta}$ ):
	\begin{enumerate}
		\item[(i)]First a priori estimate:\\
			There exists a constant $C>0$ which continuously depends on
			\begin{align}
				C=C\big(\|u^0\|_{H^1},\|v^0\|_{L^2},\|\chi^0\|_{H^1},\|b\|_{L^{2}(0,T;L^{2}(\Gamma;\Rn))},\|\ell\|_{L^2(0,T;L^2)}\big)
			\label{eqn:aprioriC}
			\end{align}
			such that for all $\tau,\beta>0$
			\begin{align*}
				&		\|u_{\tau,\beta}\|_{H^1(0,T;H^1)\cap W^{1,\infty}(0,T;L^2)}\leq C,
				&&	\|\chi_{\tau,\beta}\|_{H^1(0,T;H^{1})}\leq C,\\
				&		\|\ul u_{\tau,\beta}\|_{L^\infty(0,T;H^1)}\leq C,
				&&	\|\ul\chi_{\tau,\beta}\|_{L^\infty(0,T;H^{1})}\leq C,\\
				&		\|\ol u_{\tau,\beta}\|_{L^\infty(0,T;H^1)}\leq C,
				&&	\|\ol\chi_{\tau,\beta}\|_{L^\infty(0,T;H^{1})}\leq C.
			\end{align*}
		\item[(ii)]Second a priori estimate:\\
			There exists a constant $D>0$ which continuously depends on
			$$
				D=D\big(\|u^0\|_{H^2},\|v^0\|_{H^1},\|\chi^0\|_{H^2}, \|b\|_{L^{2}(0,T;H^{1/2}(\Gamma;\Rn))\cap H^{1}(0,T;L^{2}(\Gamma;\Rn))},\|\ell\|_{L^2(0,T;L^2)}\big)
			$$
			such that for all $\tau,\beta>0$
			\begin{align*}
				&		\|u_{\tau,\beta}\|_{H^1(0,T;H^{2})\cap W^{1,\infty}(0,T;H^1)}\leq D,
				&&	\|\chi_{\tau,\beta}\|_{H^1(0,T;H^{2})}\leq D,\\
				&		\|\ul{u}_{\tau,\beta}\|_{L^\infty(0,T;H^{2})}\leq D,
				&&	\|\ul \chi_{\tau,\beta}\|_{L^\infty(0,T;H^{2})}\leq D,\\
				&		\|\ol{u}_{\tau,\beta}\|_{L^\infty(0,T;H^{2})}\leq D,
				&&	\|\ol \chi_{\tau,\beta}\|_{L^\infty(0,T;H^{2})}\leq D,\\
				&		\|v_{\tau,\beta}\|_{L^2(0,T;H^2)\cap L^{\infty}(0,T;H^1)\cap H^1(0,T;L^{2})}\leq D,
				&&	\|\xi_\beta(\partial_t\chi_{\tau,\beta})\|_{L^2(0,T;L^2)}\leq D.
			\end{align*}
	\end{enumerate}
	\end{lemma}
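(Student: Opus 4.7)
The plan is to obtain both estimates by testing the discrete equations \eqref{eqn:disc1}--\eqref{eqn:disc5} against suitable discrete analogues of time derivatives, summing over $k=1,\ldots,K$, and exploiting the convex-concave decomposition of $\mathsf c$ together with the two-dimensional Ladyzhenskaya-type inequality to close the bounds.

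\textbf{First a priori estimate.} I would test \eqref{eqn:disc1} with $u^k-u^{k-1}=\tau v^k$ and \eqref{eqn:disc2} with $\chi^k-\chi^{k-1}$, and sum over $k$. The inertial term telescopes to $\tfrac12\|v^K\|_{L^2}^2$ plus a nonnegative quadratic remainder; the viscosity term yields the coercive $\eta\mu\sum_k\tau\|\e(v^k)\|_{L^2}^2$ by \eqref{eqn:Ctensor} and (A3); and the diffusive terms in the damage equation produce $\tfrac12\|\nabla\chi^K\|_{L^2}^2$ together with $\sum_k\tau\|(\chi^k-\chi^{k-1})/\tau\|_{H^1}^2$. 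The regularized indicator is discarded via $\xi_\beta((\chi^k-\chi^{k-1})/\tau)(\chi^k-\chi^{k-1})\geq \tau I_\beta((\chi^k-\chi^{k-1})/\tau)\geq 0$, which follows from convexity of $I_\beta$ and $I_\beta(0)=0$. The key algebraic identity is
\[
\bigl(\mathsf c_1'(\chi^k)+\mathsf c_2'(\chi^{k-1})\bigr)(\chi^k-\chi^{k-1})\geq \mathsf c(\chi^k)-\mathsf c(\chi^{k-1}),
\]
an immediate consequence of convexity of $\mathsf c_1$ and concavity of $\mathsf c_2$; combined with the discrete product identity for the elastic term in \eqref{eqn:disc1}, it makes the two coupling terms telescope (up to a nonnegative remainder) onto the elastic energy $\tfrac12\mathsf c(\chi^K)\CC\e(u^K):\e(u^K)$. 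The contribution of $f\in C^{1,1}$ reduces by a Taylor-remainder estimate to $f(\chi^K)-f(\chi^0)$ plus a quadratic piece absorbable in $\sum_k\tau\|(\chi^k-\chi^{k-1})/\tau\|_{L^2}^2$, and the boundary integral $\int_\Gamma b^k\cdot v^k$ is dominated via the trace theorem and Young's inequality. A discrete Gronwall argument then yields~(i).

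\textbf{Second a priori estimate.} For the higher-order bounds I would subtract consecutive copies of \eqref{eqn:disc1} and divide by $\tau$, then test the resulting equation with the discrete acceleration $(v^k-v^{k-1})/\tau$; this yields telescoping for $\|(v^K-v^{K-1})/\tau\|_{L^2}^2$ and the coercive $\sum_k\tau\|\e((v^k-v^{k-1})/\tau)\|_{L^2}^2$. On the damage side I would test \eqref{eqn:disc2} with $-\Delta(\chi^k-\chi^{k-1})/\tau$, producing telescoping of $\tfrac12\|\Delta\chi^K\|_{L^2}^2$ and the coercive $\sum_k\tau\|\Delta(\chi^k-\chi^{k-1})/\tau\|_{L^2}^2$, while the indicator now contributes the favorably-signed quantity $\int\xi_\beta'((\chi^k-\chi^{k-1})/\tau)|\nabla(\chi^k-\chi^{k-1})/\tau|^2\geq 0$. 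Elliptic regularity for the stationary parts of \eqref{eqn:disc1}--\eqref{eqn:disc4}, applied uniformly in $k$ exactly as in step~2 of Lemma~\ref{lemma:existenceDiscr} and using $\Gamma\in C^2$ together with $b^k\in H^{1/2}(\Gamma;\R^n)$, then lifts $u^k$ and $\chi^k$ to $H^2$ in space; the $L^2$-bound on $\xi_\beta((\chi^k-\chi^{k-1})/\tau)$ is finally read off pointwise from \eqref{eqn:disc2} as the remaining term.

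\textbf{Main obstacle.} The analytically delicate term is the elastic driving force $\tfrac12(\mathsf c_1'(\chi^k)+\mathsf c_2'(\chi^{k-1}))\CC\e(u^{k-1}):\e(u^{k-1})$ in \eqref{eqn:disc2}: its product structure places it only in $L^1(\Omega)$ under part~(i), whereas closing the second estimate requires $L^2$-control. This is the precise place where the restriction to $n=2$ enters, via the Ladyzhenskaya-type inequality $\|w\|_{L^4(\Omega)}^2\leq C\|w\|_{L^2(\Omega)}\|w\|_{H^1(\Omega)}$ applied to $w=\e(u^{k-1})$, giving $\|\e(u^{k-1})\|_{L^4}^2\leq C\|\e(u^{k-1})\|_{L^2}\|u^{k-1}\|_{H^2}$. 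Upon squaring and Young, the driving force splits into a small multiple of $\|u^{k-1}\|_{H^2}^2$ (absorbable into the coercive quantities generated on the left) and a factor $\|u^{k-1}\|_{H^1}^2$ already controlled by~(i). A secondary complication is the boundary forcing $\int_\Gamma b^k\cdot(v^k-v^{k-1})/\tau$: the $H^1(0,T;L^2(\Gamma;\R^n))$-regularity of $b$ enables a discrete integration by parts in time, transferring the difference onto $b$ at the price of boundary terms controlled by the trace theorem and the $L^\infty(0,T;H^1)$-bound on $u$ from~(i). Once these are dispatched, discrete Gronwall delivers~(ii).
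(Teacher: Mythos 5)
Your argument for part (i) is essentially the paper's: test \eqref{eqn:disc1} with $u^k-u^{k-1}$ and \eqref{eqn:disc2} with $\chi^k-\chi^{k-1}$, use the convex--concave inequality for $\mathsf c_1,\mathsf c_2$ so that the two coupling terms cancel, drop the $\xi_\beta$-term by monotonicity, and close with Korn, the trace theorem and discrete Gronwall. That part is fine.

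Part (ii), however, is where the real work lies, and your route has genuine gaps. First, differencing \eqref{eqn:disc1} in time and testing with the discrete acceleration puts $(\ell^k-\ell^{k-1})/\tau$ on the right-hand side; since the lemma only allows $D$ to depend on $\|\ell\|_{L^2(0,T;L^2)}$, this quantity is not controlled, and shifting the difference quotient back onto the test function would require the (uncontrolled) second discrete derivative of $v$. The same test also needs a uniform bound on the initial discrete acceleration $(v^1-v^0)/\tau$, which the data $u^0\in H^2$, $v^0\in H^1$, $\ell\in L^2(L^2)$ do not provide. Second, your claim that the Ladyzhenskaya splitting of $\frac12(\mathsf c_1'+\mathsf c_2')\CC\e(u^{k-1}){:}\e(u^{k-1})$ leaves ``a small multiple of $\|u^{k-1}\|_{H^2}^2$ absorbable into the coercive quantities generated on the left'' fails in your scheme: your tests generate coercivity only in $\e((v^k-v^{k-1})/\tau)$, $\e(v^K)$, $\Delta\partial_t\chi$ and $\Delta\chi^K$, never in the spatial $H^2$-norms of $u$ or $v$. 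Obtaining those uniform $H^2$-in-space bounds is precisely the delicate point; the paper gets them by testing \eqref{eqn:disc1} with $-\tau\di\big(\mathsf c(\chi^k)\CC\e(u^k)+\mathsf d(\chi^k)\DD\e((u^k-u^{k-1})/\tau)\big)$ (chosen exactly because of the inhomogeneous Neumann datum, cf.\ the remark after Theorem \ref{theorem:existenceCont}), then recovering $\|v\|_{L^2(H^2)}$ from the elliptic estimate with the boundary term $b/\mathsf d-(\mathsf c/\mathsf d)\CC\e(u)\cdot\nu$ lifted by the $H^{1/2}(\Gamma)\hookrightarrow H^1(\Omega)$ right inverse, and controlling the resulting products $|b||\nabla\chi|$, $|\e(u)||\nabla\chi|$, $|\nabla\chi||\e(v)|$ with Ladyzhenskaya. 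Appealing instead to ``elliptic regularity as in step 2 of Lemma \ref{lemma:existenceDiscr}'' does not work quantitatively: that argument is for fixed $\tau$ and its constants blow up as $\tau\downarrow0$ (the elliptic coefficient there is $\tau^2\mathsf c+\tau\mu\mathsf d$). Third, even after these bounds, the resulting Gronwall inequality is superlinear (terms of the form $\tau k\|v^k\|_{H^1}^2$ and $\|\partial_t\chi\|_{H^1}^2$ multiplying the unknown at the current index), so the paper must first restrict to a short interval $[0,t_0]$ with $t_0$ determined by the first estimate, and then recover the global bound by a covering/induction argument over finitely many overlapping intervals; this step, and the acceleration bound in $H^1(0,T;L^2)$ obtained simply by comparison in \eqref{eqn:disc1} at the end, are absent from your sketch, and ``discrete Gronwall delivers (ii)'' does not go through without them.
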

	\textbf{Proof.} 
	We will omit the subscript $\tau$ and $\beta$ in the time-discrete solutions.
	\begin{enumerate}
			\item[]\hspace*{-1em}\underline{To (i):}
			In the following, we make use of a combined convex-concave estimate for:
			A convexity estimate for $\mathsf{c}_1$ and concavity estimate for $\mathsf{c}_2$ yield:
			\begin{align*}
				&\mathsf{c}_1(\chikk)-\mathsf{c}_1(\chik)\geq \mathsf{c}_1'(\chik)(\chikk-\chik)\\
				&\mathsf{c}_2(\chikk)-\mathsf{c}_2(\chik)\geq \mathsf{c}_2'(\chikk)(\chikk-\chik).
			\end{align*}
			Adding them shows
			\begin{align*}
				\mathsf{c}(\chikk)-\mathsf{c}(\chik)
				&=\big(\mathsf{c}_1(\chikk)-\mathsf{c}_1(\chik)\big)
				+\big(\mathsf{c}_2(\chikk)-\mathsf{c}_2(\chik)\big)\\
				&\geq(\mathsf{c}_1'(\chik)+\mathsf{c}_2'(\chikk))(\chikk-\chik).
			\end{align*}
			By using this combined estimate and the positivity of $\CC$, it holds
			\begin{align}
				&\mathsf{c}(\chi^k)\CC\e(u^k):\e(u^k-u^{k-1})\notag\\
				&\qquad= \frac12 \mathsf{c}(\chi^k)\Big(\CC\e(\uk):\e(\uk)-\CC\e(\ukk):\e(\ukk)+\CC\e(\uk-\ukk):\e(\uk-\ukk)\Big)\notag\\
				&\qquad\geq \frac{\mathsf{c}(\chi^k)}{2}\CC\e(\uk):\e(\uk)
					-\frac{\mathsf{c}(\chi^{k-1})}{2}\CC\e(u^{k-1}):\e(\ukk)\notag\\
				&\qquad\quad+\frac12\big(\mathsf{c}(\chi^{k-1})-\mathsf{c}(\chi^k)\big)\CC\e(u^{k-1}):\e(\ukk)\notag\\
				&\qquad\geq\frac{\mathsf{c}(\chi^k)}{2}\CC\e(u^k):\e(\ukk)-\frac{\mathsf{c}(\chi^{k-1})}{2}\CC\e(\ukk):\e(\ukk)\notag\\
			\label{eqn:convConc}
				&\qquad\quad+\frac12\big(\mathsf{c}_1'(\chi^{k})+\mathsf{c}_2'(\chi^{k-1})\big)(\chi^{k-1}-\chi^k)\CC\e(u^{k-1}):\e(\ukk).
			\end{align}
			
			Now, by testing equation \eqref{eqn:disc1} with $\uk-\ukk$,
			integrating over $\Omega$, summing over the time index $k=1,\ldots,\ol t_\tau/\tau$,
			integrating by parts and using \eqref{eqn:disc4}, we obtain
			(remember that $v^k=(u^k-u^{k-1})/\tau$)
			\begin{align*}
					&\sum_{k=1}^{\ol t_\tau/\tau}\int_\Omega(v^k-v^{k-1})v^k\dx
					+\sum_{k=1}^{\ol t_\tau/\tau}\int_\Omega \mathsf{c}(\chik)\CC\e(\uk):\e(u^k-u^{k-1})\dx\\
					&+\int_0^{\ol t_\tau}\int_\Omega \mathsf{d}(\chik)\DD\e(\ol v):\e(\ol v)\dxs\\
					&\qquad=\int_0^{\ol t_\tau}\int_\Omega\ol\ell\cdot\ol v\dxs
						+\int_0^{\ol t_\tau}\int_\Gamma\ol b\cdot\ol v\dxs.
			\end{align*}
			Applying elementary estimates including
			the convex-concave estimate \eqref{eqn:convConc}, Korn's and Young's inequality and the
			trace theorem
			$H^1(\Omega;\Rn)\hookrightarrow L^2(\Gamma;\Rn)$ yield ($\eta,\delta, C_\delta>0$ are constants)
			\begin{align}
				&\frac 12\|\ol v(t)\|_{L^2}^2-\frac 12\|v^0\|_{L^2}^2
					+\underbrace{\int_\Omega\frac{\mathsf{c}(\ol\chi(t))}{2}\CC\e(\ol u(t)):\e(\ol u(t))\dx}_{
						\geq 0\text { by using (A2)}}
					-\int_\Omega\frac{\mathsf{c}(\chi^0)}{2}\CC\e(u^0):\e(u^0)\dx\notag\\
					&+\int_0^{\ol t_\tau}\int_\Omega\frac12\big(\mathsf{c}_1'(\ol\chi)+\mathsf{c}_2'(\ul\chi)\big)(-\partial_t\chi)\CC\e(\ul u):\e(\ul u)\dxs
					+\eta\|\e(\ol v)\|_{L^2(0,\ol t_\tau;L^2)}^2\notag\\
					&\qquad\leq C_\delta\|\ol\ell\|_{L^2(0,\ol t_\tau;L^2)}^2+C_\delta\|\ol b\|_{L^2(0,\ol t_\tau;L^2(\Gamma;\Rn))}^2+\delta\|\ol v\|_{L^2(0,\ol t_\tau;H^1)}^2.
			\label{eqn:aprioriU1}
			\end{align}
			
			Testing equation \eqref{eqn:disc2} with $\chik-\chikk$,
			integrating over $\Omega$, summing over the time index $k=1,\ldots,\ol t_\tau/\tau$, integrating by parts and using \eqref{eqn:disc5}, we obtain
			\begin{align*}
					&\|\partial_t\chi\|_{L^2(0,\ol t_\tau;L^2)}^2
					+\|\nabla\partial_t\chi\|_{L^2(0,\ol t_\tau;L^2)}^2
					+\sum_{k=1}^{\ol t_\tau/\tau}\int_\Omega\nabla\chik\cdot\nabla(\chik-\chikk)\dx\notag\\
					&+\sum_{k=1}^{\ol t_\tau/\tau}\int_\Omega\xi_\beta\Big(\frac{\chik-\chikk}{\tau}\Big)(\chik-\chikk)\dx
					+\int_0^{\ol t_\tau}\int_\Omega\frac12\big(\mathsf{c}_1'(\ol\chi)+\mathsf{c}_2'(\ul\chi)\big)\partial_t\chi\CC\e(\ul u):\e(\ul u)\dxs\\
					&+\int_0^{\ol t_\tau}\int_\Omega f'(\ol\chi)\partial_t\chi\dxs\\
					&\qquad=0
			\end{align*}
			By using the monotonicity of $\xi_\beta$ (see Definition \ref{def:regularization}), we get $\xi_\beta\Big(\frac{\chik-\chikk}{\tau}\Big)(\chik-\chikk)\geq 0$.
			Together with elementary convexity estimates, the Lipschitz continuity of $f'$ (see (A4))
			and Young's inequality, we find
			\begin{align}
					&\|\partial_t\chi\|_{L^2(0,\ol t_\tau;L^2)}^2
					+\|\nabla\partial_t\chi\|_{L^2(0,\ol t_\tau;L^2)}^2
					+\frac 12\|\nabla \ol\chi(t)\|_{L^2}^2-\frac 12\|\nabla \chi^0\|_{L^2}^2\notag\\
					&\quad+\int_0^{\ol t_\tau}\int_\Omega\frac12\big(\mathsf{c}_1'(\ol\chi)+\mathsf{c}_2'(\ul\chi)\big)\partial_t\chi\CC\e(\ul u):\e(\ul u)\dxs\notag\\
					&\qquad\leq C_\delta(\|\ol\chi\|_{L^2(0,\ol t_\tau;L^2)}^2+1)+\delta\|\partial_t\chi\|_{L^2(0,\ol t_\tau;L^2)}^2
			\label{eqn:aprioriChi1a}
			\end{align}
			To proceed, we consider the calculation
			\begin{align}
				\frac12\|\ol\chi(t)\|_{L^2}^2
				={}&\int_0^{\ol t_\tau}\int_\Omega\frac{\mathrm{d}}{\dt}\frac12|\chi|^2\dxs
					-\frac12\|\chi^0\|_{L^2}^2\notag\\
				={}&\int_0^{\ol t_\tau}\int_\Omega\chi\partial_t\chi\dxs-\frac12\|\chi^0\|_{L^2}^2\notag\\
				\leq{}&\delta\|\partial_t\chi\|_{L^2(0,\ol t_\tau;L^2)}^2
					+C_\delta\|\chi\|_{L^2(0,\ol t_\tau;L^2)}^2-\frac12\|\chi^0\|_{L^2}^2.
			\label{eqn:estTrick}
			\end{align}
			Adding $\frac12\|\ol\chi(t)\|_{L^2}^2$ on both sides in \eqref{eqn:aprioriChi1a}
			and using \eqref{eqn:estTrick} on the right-hand side,
			we find
			\begin{align}
					&\Big(\frac12-\delta\Big)\|\partial_t\chi\|_{L^2(0,\ol t_\tau;L^2)}^2
					+\|\nabla\partial_t\chi\|_{L^2(0,\ol t_\tau;L^2)}^2
					+\frac 12\|\ol\chi(t)\|_{H^1}^2-\frac 12\|\chi^0\|_{H^1}^2\notag\\
					&\quad+\int_0^{\ol t_\tau}\int_\Omega\frac12\big(\mathsf{c}_1'(\ol\chi)+\mathsf{c}_2'(\ul\chi)\big)\partial_t\chi\CC\e(\ul u):\e(\ul u)\dxs\notag\\
					&\qquad\leq C_\delta(\|\ol\chi\|_{L^2(0,\ol t_\tau;L^2)}^2+1).
			\label{eqn:aprioriChi1}
			\end{align}
			
			Adding \eqref{eqn:aprioriChi1} and \eqref{eqn:aprioriU1},
			and choosing $\delta>0$ small, 
			we see that the term
			$$
				\int_0^{\ol t_\tau}\int_\Omega\frac12\big(\mathsf{c}_1'(\ol\chi)+\mathsf{c}_2'(\ul\chi)\big)\partial_t\chi\CC\e(\ul u):\e(\ul u)\dxs
			$$
			cancels out in the calculations and we obtain
			\begin{align}
				&\|\ol v(t)\|_{L^2}^2
					+\|\ol\chi(t)\|_{H^1}^2
					+\|\e(\ol v)\|_{L^2(0,\ol t_\tau;L^2)}^2
					+\|\partial_t\chi\|_{L^2(0,\ol t_\tau;L^2)}^2
					+\|\nabla\partial_t\chi\|_{L^2(0,\ol t_\tau;L^2)}^2\notag\\
					&\leq C\Big(
						1
						+\|u^0\|_{H^1}^2
						+\|v^0\|_{L^2}^2
						+\|\chi^0\|_{H^1}^2
						+\|\ol\ell\|_{L^2(0,\ol t_\tau;L^2)}^2
						+\|\ol b\|_{L^2(0,\ol t_\tau;L^2(\Gamma;\Rn))}^2
						+\|f'(\ol\chi)\|_{L^2(0,\ol t_\tau;L^2)}^2
						\Big)\notag\\
			\label{eqn:est2}
						&\quad+C\int_0^{\ol t_\tau}\|\ol\chi\|_{L^2}^2\ds.
			\end{align}
			Korn's inequality yields
			\begin{align*}
				\|\e(\ol v)\|_{L^2(0,\ol t_\tau;L^2)}^2\geq \frac{1}{C}\|\ol v\|_{L^2(0,\ol t_\tau;H^1)}^2-\|\ol v\|_{L^2(0,\ol t_\tau;L^2)}^2.
			\end{align*}
			We thus obtain from \eqref{eqn:est2}
			\begin{align*}
				&\|\ol v(t)\|_{L^2}^2
					+\|\ol\chi(t)\|_{H^1}^2
					+\|\ol v\|_{L^2(0,\ol t_\tau;H^1)}^2
					+\|\partial_t\chi\|_{L^2(0,\ol t_\tau;H^1)}^2\notag\\
					&\qquad\leq C\Big(
						1
						+\|u^0\|_{H^1}^2
						+\|v^0\|_{L^2}^2
						+\|\chi^0\|_{H^1}^2
						+\|\ol\ell\|_{L^2(0,\ol t_\tau;L^2)}^2
						+\|\ol b\|_{L^2(0,\ol t_\tau;L^2(\Gamma;\Rn))}^2
						\Big)\\
					&\qquad\quad+C\int_0^{\ol t_\tau}\Big(\|\ol v\|_{L^2}^2+\|\ol \chi\|_{L^2}^2\Big)\ds.
			\end{align*}
			We end up with the desired estimates in (i) by using the discrete version of Gronwall's lemma
			and
			$$
				\|\ol u(t)\|_{H^1}^2=\Big\|u^0+\int_0^{\ol t_\tau}\ol v\ds\Big\|_{H^1}^2
				\leq C\big(\|u^0\|_{H^1}^2+\|\ol v\|_{L^2(0,\ol t_\tau;H^1)}^2\big)
			$$
			afterwards.
			
		\item[]\hspace*{-1em}\underline{To (ii) -- local-in-time estimate:}\\
			At first we are going to show the a priori estimates in (ii) for small time.
			In the next step global-in-time estimates will be derived.
			
			Testing equation \eqref{eqn:disc1} with $-\tau\di\Big(\mathsf{c}(\chik)\CC\e(\uk)+\mathsf{d}(\chik)\DD\e\Big(\frac{\uk-\ukk}{\tau}\Big)\Big)$, integrating over $\Omega$ in space
			and summing over the time index $k=1,\ldots,\ol t_\tau/\tau$, we may write the result in the
			following way
			\begin{align}
			 &\underbrace{\int_0^{\ol t_\tau}\int_\Omega-\partial_t v\cdot\di\big(\mathsf{c}(\ol\chi)\CC\e(\ol u)+\mathsf{d}(\ol\chi)\DD\e(\ol v)\big)\dxs}_{=:T_1}\notag\\
			 &+\int_0^{\ol t_\tau}\int_\Omega\frac12\big|\di\big(\mathsf{c}(\ol\chi)\CC\e(\ol u)+\mathsf{d}(\ol\chi)\DD\e(\ol v)\big)\big|^2\dxs\notag\\
			 &+\underbrace{
			 \int_0^{\ol t_\tau}\int_\Omega\frac12\Big|\mathsf{c}'(\ol\chi)\nabla \ol\chi\cdot\CC\e(\ol u)+\mathsf{c}(\ol\chi)\di\big(\CC\e(\ol u)\big)
			 	+\mathsf{d}'(\ol\chi)\nabla \ol\chi\cdot\DD\e(\ol v)+\mathsf{d}(\ol\chi)\di\big(\DD\e(\ol v)\big)\Big|^2\dxs}_{=:T_2}\notag\\
			 &\qquad
					=\underbrace{\int_0^{\ol t_\tau}\int_\Omega -\ol\ell\cdot\di\big(\mathsf{c}(\ol\chi)\CC\e(\ol u)+\mathsf{d}(\ol\chi)\DD\e(\ol v)\big)\dxs}_{
						\leq C_\delta\|\ol\ell\|_{L^2(L^2)}^2+\delta\|\di(\mathsf{c}(\ol\chi)\CC\e(\ol u)+\mathsf{d}(\ol\chi)\DD\e(\ol v))\|_{L^2(L^2)}^2}.
			\label{eqn:est6}
			\end{align}
			Note that the second summand and the third summand, i.e. $T_2$, are identical.
			The splitting will simplify the calculations.
			
			Testing equation \eqref{eqn:disc2} with $-\Delta(\chik-\chikk)$, integrating over $\Omega$ in space
			and summing over $k=1,\ldots,\ol t_\tau/\tau$, we obtain
			\begin{align}
				&\underbrace{-\int_0^{\ol t_\tau}\int_\Omega\partial_t\chi\Delta\partial_t\chi\dxs}_{=\|\nabla\partial_t\chi\|_{L^2(L^2)}^2\text{ by using \eqref{eqn:disc5a}}}
					+\|\Delta\partial_t\chi\|_{L^2(0,\ol t_\tau;L^2)}^2
					+\underbrace{\sum_{k=1}^{\ol t_\tau/\tau}\int_\Omega\Delta\chik(\Delta\chik-\Delta\chikk)\dx}_{\geq \frac12\|\Delta\ol\chi(t)\|_{L^2}^2-\frac12\|\Delta\chi^0\|_{L^2}^2}
					\notag\\
				&\underbrace{-\int_0^{\ol t_\tau}\int_\Omega\xi_\beta(\partial_t\chi)\Delta\partial_t\chi\dxs}_{=:T_3}
					\underbrace{-\int_0^{\ol t_\tau}\int_\Omega\frac 12(\mathsf{c}_1'(\ol\chi)+\mathsf{c}_2'(\ul\chi))\CC\e(\ul u):\e(\ul u)\Delta\partial_t\chi\dxs}_{=:T_4}\notag\\
				&\underbrace{-\int_0^{\ol t_\tau}\int_\Omega f'(\ol\chi)\Delta\partial_t\chi\dxs}_{=:T_5}\notag\\
				&\qquad=0.
			\label{eqn:est4}
			\end{align}
			In the following, we are going to estimate $T_1,\ldots,T_{7}$ and conclude the claimed a priori estimates thereafter:
			\begin{itemize}
				\item[]\hspace*{-1.5em}-- To $(T_1)$:
				Integration by parts in space yields
				\begin{align*}
					T_1
					={}&\underbrace{\int_0^{\ol t_\tau}\int_\Omega\e(\partial_t v):\mathsf{c}(\ol\chi)\CC\e(\ol u)\dxs}_{=:T_1^{(1)}}
						+\underbrace{\int_0^{\ol t_\tau}\int_\Omega\e(\partial_t v):\mathsf{d}(\ol\chi)\DD\e(\ol v)\dxs}_{=:T_1^{(2)}}\\
					&\underbrace{-\int_{0}^{\ol t_\tau}\int_\Gamma\partial_t v\cdot\bl\big(\mathsf{c}(\ol\chi)\CC\e(\ol u)+\mathsf{d}(\ol\chi)\DD\e(\ol v)\big)\cdot\nu \br\dxs}_{=:T_1^{(3)}}
				\end{align*}
				Note that we have no compensating $\partial_t v\,$-term on the left-hand side of \eqref{eqn:est6}.
				To circumvent this problem we rewrite the term $T_1^{(1)}$ by using the discrete integration by parts formula in time
				$$
					\sum_{k=1}^N \tau\frac{a^k-a^{k-1}}{\tau} b^k
						=a^N b^N-a^0 b^0-\sum_{k=1}^N \tau a^{k-1}\frac{b^k-b^{k-1}}{\tau}.
				$$
				Together with the boundedness of $\mathsf{c}$ and $\mathsf{c}'$ (see (A2)) we find:
				\begin{align*}
					T_1^{(1)}={}&-\int_{0}^{\ol t_\tau}\int_\Omega \e(\ul v):\frac{\mathsf{c}(\ol\chi)\CC\e(\ol u)-\mathsf{c}(\ul\chi)\CC\e(\ul u)}{\tau}\dxs\\
						&+\int_\Omega \e(\ol v(t)):\mathsf{c}(\ol\chi(t))\CC\e(\ol u(t))\dx-\int_\Omega \e(v^0):\mathsf{c}(\chi^0)\CC\e(u^0)\dx\\
					={}&-\int_{0}^{\ol t_\tau}\int_\Omega \e(\ul v):\frac{\mathsf{c}(\ol\chi)-\mathsf{c}(\ul\chi)}{\tau}\CC\e(\ol u)\dxs
						-\int_{0}^{\ol t_\tau}\int_\Omega \e(\ul v):\mathsf{c}(\ul\chi)\CC\e(\ol v)\dxs\\
						&+\int_\Omega \e(\ol v(t)):\mathsf{c}(\ol\chi(t))\CC\e(\ol u(t))\dx
						-\int_\Omega \e(v^0):\mathsf{c}(\chi^0)\CC\e(u^0)\dx\\
					\geq{}&
						-C\|\mathsf{c}'\|_{L^\infty}\underbrace{\int_{0}^{\ol t_\tau}\int_\Omega|\e(\ul v)||\partial_t\chi||\e(\ol u)|\dxs}_{=:T_1^{(1,1)}}
						-C\underbrace{\int_{0}^{\ol t_\tau}\int_\Omega|\e(\ul v)||\e(\ol v)|\dxs}_{=:T_1^{(1,2)}}
						\\
						&-C\|\mathsf{c}\|_{L^\infty}\underbrace{\int_\Omega|\e(\ol v(t))||\e(\ol u(t))|\dx}_{=:T_1^{(1,3)}}
						-C\|\mathsf{c}\|_{L^\infty}\|\e(v^0)\|_{L^2}\|\e(u^0)\|_{L^4}.
				\end{align*}
				By using H\"older's and Young's inequalities, uniform boundedness of
				$\|\partial_t\chi\|_{L^2(0,T;L^4)}$,
				$\|\e(\ol u)\|_{L^\infty(0,T;L^2)}$,
				$\|\e(\ol v)\|_{L^2(0,T;L^2)}$,
				$\|\e(\ul v)\|_{L^2(0,T;L^2)}$
				and $\|\e(\ol v)\|_{L^2(0,T;L^2)}$ (see First a priori estimates), we obtain
				\begin{align*}
					T_1^{(1,1)}\leq{}&\|\partial_t\chi\|_{L^2(0,\ol t_\tau;L^4)}\|\e(\ul v)\|_{L^2(0,\ol t_\tau;L^4)}\|\e(\ol u)\|_{L^\infty(0,\ol t_\tau;L^2)}\\
						\leq{}&C_\delta+\delta\|\e(\ul v)\|_{L^2(0,\ol t_\tau;L^4)}^2,\\
					T_1^{(1,2)}\leq{}&\frac12\|\e(\ul v)\|_{L^2(0,T;L^2)}^2+\frac12\|\e(\ol v)\|_{L^2(0,T;L^2)}^2\leq C,\\
					T_1^{(1,3)}\leq{}&C_\delta\|\e(\ol u)\|_{L^\infty(0,T;L^2)}^2+\delta\|\e(\ol v(t))\|_{L^2}^2
						\leq C_\delta+\delta\|\e(\ol v(t))\|_{L^2}^2.
				\end{align*}
				The term $T_1^{(2)}$ can be estimated as follows:
				\begin{align*}
					T_1^{(2)}&=\int_0^{\ol t_\tau}\int_\Omega\frac{1}{2\tau}\bl \mathsf{d}(\ol\chi)\DD\e(\ol v):\e(\ol v)-\mathsf{d}(\ol\chi)\DD\e(\ul v):\e(\ul v)\br\dxs\notag\\
						&\quad
							+\int_0^{\ol t_\tau}\int_\Omega\frac{1}{2\tau}\mathsf{d}(\ol\chi)\DD\e(\ol v-\ul v):\e(\ol v-\ul v)\dxs\notag\\
						&=\underbrace{\int_0^{\ol t_\tau}\int_\Omega\frac{1}{2\tau}\bl \mathsf{d}(\ol\chi)\DD\e(\ol v):\e(\ol v)-\mathsf{d}(\ul\chi)\DD\e(\ul v):\e(\ul v)\br\dxs}_{=:T_1^{(2,1)}}\notag\\
							&\quad
							+\underbrace{\int_0^{\ol t_\tau}\int_\Omega\frac{1}{2\tau}\bl \mathsf{d}(\ul\chi)-\mathsf{d}(\ol\chi)\br\DD\e(\ul v):\e(\ul v)\dxs}_{=:T_1^{(2,2)}}\\
							&\quad
							+\underbrace{\int_0^{\ol t_\tau}\int_\Omega\frac{1}{2\tau}\mathsf{d}(\ol\chi)\DD\e(\ol v-\ul v):\e(\ol v-\ul v)\dxs}_{\geq 0}.
				\end{align*}
				For further estimations we make use of the
				\textit{Ladyzhenskaya's inequality} (see \cite{Lad58})
				\begin{align}
					\|w\|_{L^4}\leq C\|w\|_{H^1}^{1/2}\|w\|_{L^2}^{1/2}\quad \text{ valid for all }w\in H^1(\Omega),
					\label{eqn:GN}
				\end{align}
				which is a special version of
				\textit{Gagliardo-Nirenberg inequality} in 2D (see \cite{Nir59}).
				This inequality naturally generalizes to $\R^m$-valued Sobolev functions.

				By using \eqref{eqn:GN}, the property $\mathsf{d}(\cdot)\geq\eta>0$ and the Lipschitz continuity
				of $\mathsf{d}$ (see (A3)), we obtain
				\begin{align*}
					T_1^{(2,1)}&=\sum_{k=1}^{\ol t_\tau/\tau}\int_\Omega\frac12\bl \mathsf{d}(\chi^k)\DD\e(\vk):\e(\vk)-\mathsf{d}(\chi^{k-1})\DD\e(\vkk):\e(\vkk)\br\dx\\
						&=\int_\Omega\frac12\Big(\mathsf{d}(\ol\chi)\DD\e(\ol v(t)):\e(\ol v(t))-\mathsf{d}(\chi^0)\DD\e(v^0):\e(v^0)\Big)\dx\\
						&\geq\int_\Omega\bl\frac\eta2\e(\ol v(t)):\e(\ol v(t))-\frac12\mathsf{d}(\chi^0)\DD\e(v^0):\e(v^0)\br\dx,\\
					T_1^{(2,2)}&\geq -C\int_0^{\ol t_\tau}\int_\Omega|\partial_t\chi||\e(\ul v)|^2\dxs\\
						&\geq -C\int_0^{\ol t_\tau}\|\partial_t\chi\|_{L^4}\|\e(\ul v)\|_{L^2}\|\e(\ul v)\|_{L^4}\ds\\
						&\geq -\delta_1\int_0^{\ol t_\tau}\|\partial_t\chi\|_{L^4}^2\|\e(\ul v)\|_{L^2}^2\ds-C_{\delta_1}\int_0^{\ol t_\tau}\|\e(\ul v)\|_{L^4}^2\ds\\
						&\geq -\delta_1\int_0^{\ol t_\tau}\|\partial_t\chi\|_{H^1}^2\|\e(\ul v)\|_{L^2}^2\ds-C_{\delta_1}\int_0^{\ol t_\tau}\|\e(\ul v)\|_{L^2}\|\e(\ul v)\|_{H^1}\ds\\
						&\geq -\delta_1\int_0^{\ol t_\tau}\|\partial_t\chi\|_{H^1}^2\|\e(\ul v)\|_{L^2}^2\ds-C_{\delta_1}C_{\delta_2}\|\e(\ul v)\|_{L^2(0,\ol t_\tau;L^2)}^2
							-C_{\delta_1}\delta_2\|\e(\ul v)\|_{L^2(0,\ol t_\tau;H^1)}^2.
				\end{align*}
				Note that by choosing $\delta_1=\delta$ and $\delta_2=\delta C_{\delta_1}^{-1}$ and boundedness of $\|\e(\ul v)\|_{L^2(0,\ol t_\tau;L^2)}^2$ by the
				First a priori estimates,
				$$
					T_1^{(2,2)}\geq-\delta\int_0^{\ol t_\tau}\|\partial_t\chi\|_{H^1}^2\|\e(\ul v)\|_{L^2}^2\ds-\delta\|\e(\ul v)\|_{L^2(0,\ol t_\tau;H^1)}^2-C_\delta.
				$$
				Please notice that in order to treat the term $T_1^{(2,2)}$ in the sequel
				it will be crucial to have established the boundedness of $\|\partial_t\chi\|_{L^2(0,T;H^1)}$
				(see First a priori estimates) which is due to the higher-order viscosity term $-\Delta\chi_t$ in the damage equation.
				
				The term $T_1^{(3)}$ can be treated by using the Neumann condition \eqref{eqn:disc4} and by applying the discrete integration by parts formula in time
				\begin{align*}
					T_1^{(3)}
						&=-\int_{0}^{\ol t_\tau}\int_\Gamma\partial_t v\cdot\ol b\dxs\\
						&=-\int_{0}^{\ol t_\tau}\int_\Gamma\ul v\cdot\partial_t b\dxs
							-\int_\Gamma\ol v(t)\cdot \ol b(t)\dx
							+\int_\Gamma v^0\cdot b^0\dx\\
						&\geq -\int_{0}^{\ol t_\tau}\|\ul v\|_{L^2(\Gamma;\Rn)}\|\partial_t b\|_{L^2(\Gamma;\Rn)}\ds
							-\|\ol v(t)\|_{L^2(\Gamma;\Rn)}\|\ol b(t)\|_{L^2(\Gamma;\Rn)}\\
						&\quad-\|v^0\|_{L^2(\Gamma;\Rn)}\|b^0\|_{L^2(\Gamma;\Rn)}
				\end{align*}
 				By using the trace theorem $H^{1}(\Omega;\R^n)\hookrightarrow L^2(\Gamma;\R^n)$
				and the boundedness of\linebreak
				$\|\ul v\|_{L^2(0,\ol t_\tau;H^1)}^2$
				(see First a priori estimate) as well as of $\|\partial_t b\|_{L^2(0,\ol t_\tau;L^{2}(\Gamma;\Rn))}^2$, 
				$\|\ol b(t)\|_{L^2(\Gamma;\Rn)}^2$, $\|v^0\|_{H^1}^2$ and $\|b^0\|_{L^2(\Gamma;\Rn)}^2$,
				we obtain
				\begin{align*}
					T_1^{(3)}\geq{}&-\frac12\|\ul v\|_{L^2(0,\ol t_\tau;H^1)}^2-\frac12\|\partial_t b\|_{L^2(0,\ol t_\tau;L^{2}(\Gamma;\Rn))}^2
						-\delta\|\ol v(t)\|_{H^1}^2-C_\delta\|\ol b(t)\|_{L^2(\Gamma;\Rn)}^2\notag\\
						&-\frac12\|v^0\|_{H^1}^2-\frac12\|b^0\|_{L^2(\Gamma;\Rn)}^2\\
						\geq{}&-C_\delta-\delta\|\ol v(t)\|_{H^1}^2.
				\end{align*}
				\item[]\hspace*{-1.5em}-- To $(T_2)$:
					With the help of Young's inequality, we estimate $T_2$ by
					\begin{align}
						T_2\geq{}&\delta\underbrace{\int_0^{\ol t_\tau}\int_\Omega\Big|\mathsf{d}(\ol\chi)\di\big(\DD\e(\ol v)\big)\Big|^2\dxs}_{=:T_2^{(1)}}
							-C_\delta\underbrace{\int_0^{\ol t_\tau}\int_\Omega\Big|\mathsf{c}'(\ol\chi)\nabla \ol\chi\cdot\CC\e(\ol u)\Big|^2\dxs}_{=:T_2^{(2)}}\notag\\
							&-C_\delta\bigg(\underbrace{\int_0^{\ol t_\tau}\int_\Omega\Big|\mathsf{c}(\ol\chi)\di\big(\CC\e(\ol u)\big)\Big|^2\dxs}_{=:T_2^{(3)}}
			 				+\underbrace{\int_0^{\ol t_\tau}\int_\Omega\Big|\mathsf{d}'(\ol\chi)\nabla \ol\chi\cdot\DD\e(\ol v)\Big|^2\dxs}_{=:T_2^{(4)}}
							\bigg)
						\label{eqn:T2term}
					\end{align}
					By the following elliptic regularity estimate
					which follows from \cite[Theorem 3.4.1]{CDN10}
					(remember that $\Gamma$ is a $C^2$-boundary by (A1))
					$$
						\|w\|_{H^2}^2\leq C\Big(\|\di(\DD\e(w))\|_{L^2}^2+\|w\|_{H^1}^2+\|\DD\e(w)\cdot \nu \|_{H^{1/2}(\Gamma;\Rn)}^2\Big),
					$$
					valid for all $w\in H^2(\Omega;\Rn)$, by the Neumann boundary condition \eqref{eqn:disc4} (remember that $\mathsf{d}(\cdot)\geq\eta>0$ by (A3))
					and by the boundedness of $\|\ol v\|_{L^2(0,T;H^1)}$ (see First a priori estimate),
					we obtain
					\begin{align*}
						T_2^{(1)}\geq{}&\widetilde C\int_0^{\ol t_\tau}\int_\Omega\big|\di(\DD\e(\ol v))\big|^2\dxs\\
						\geq{}& \widetilde C\|\ol v\|_{L^2(0,\ol t_\tau;H^2)}^2-C\|\ol v\|_{L^2(0,\ol t_\tau;H^1)}^2-C\|\DD\e(\ol v)\cdot\nu \|_{L^2(0,\ol t_\tau;H^{1/2}(\Gamma;\Rn))}^2\\
						={}&\widetilde C\|\ol v\|_{L^2(0,\ol t_\tau;H^2)}^2
							-C\|\ol v\|_{L^2(0,\ol t_\tau;H^1)}^2
							-C\big\|\frac{\ol b}{\mathsf{d}(\ol\chi)}-\frac{\mathsf{c}(\ol\chi)}{\mathsf{d}(\ol\chi)}\CC\e(\ol u)\cdot\nu\big\|_{L^2(0,\ol t_\tau;H^{1/2}(\Gamma;\Rn))}^2\\
						={}&\widetilde C\|\ol v\|_{L^2(0,\ol t_\tau;H^2)}^2\\
							&-C\Big(\underbrace{\big\|\frac{\ol b}{\mathsf{d}(\ol\chi)}\big\|_{L^2(0,\ol t_\tau;H^{1/2}(\Gamma;\Rn))}^2}_{T_1^{(1,1)}}
								+\underbrace{\|\frac{\mathsf{c}(\ol\chi)}{\mathsf{d}(\ol\chi)}\CC\e(\ol u)\cdot\nu\big\|_{L^2(0,\ol t_\tau;H^{1/2}(\Gamma;\Rn))}^2}_{T_1^{(1,2)}}
								+1\Big).
					\end{align*}
					The constant $\widetilde C>0$ does depend on $\eta$ from (A3).
					The well-known trace theorem yields $H^{1}(\Omega;\Rn)\hookrightarrow H^{1/2}(\Gamma;\Rn)$
					with a continuous right inverse $H^{1/2}(\Gamma;\Rn)\hookrightarrow H^{1}(\Omega;\Rn)$
					(see \cite[Theorem 8.8]{Wl87}).
					In the following, we denote the extension of $\ol b$ also by $\ol b$.
					We obtain by using the trace theorem, the Gagliardo-Nirenberg type inequality \eqref{eqn:GN},
					the assumptions in (A2) and (A3)
					and the boundedness of $\|\nabla\ol \chi\|_{L^\infty(0,T;L^2)}$
					and $\|\e(\ol u)\|_{L^\infty(0,T;L^2)}$ (see First a priori estimates)
					\begin{align*}
						T_2^{(1,1)}
							\leq{}&
							C\big\|\frac{\ol b}{\mathsf{d}(\ol\chi)}\big\|_{L^2(0,\ol t_\tau;H^{1})}^2\\
							\leq{}&
							C\Big(\big\|\ol b\big\|_{L^2(0,\ol t_\tau;L^{2})}^2
								+\big\|\nabla \ol b\big\|_{L^2(0,\ol t_\tau;L^{2})}^2
								+\big\||\ol b||\nabla\ol\chi|\big\|_{L^2(0,\ol t_\tau;L^{2})}^2
							\Big)\\
							\leq{}&
							C\Big(\big\|\ol b\big\|_{L^2(0,\ol t_\tau;H^{1})}^2
								+\int_0^{\ol t_\tau}\|\ol b\|_{L^4}^2\|\nabla\ol\chi\|_{L^4}^2\ds
							\Big)\\
							\leq{}&
							C\Big(\big\|\ol b\big\|_{L^2(0,\ol t_\tau;H^{1})}^2
								+\int_0^{\ol t_\tau}\|\ol b\|_{L^2}\|\ol b\|_{H^1}\|\nabla\ol\chi\|_{L^2}\|\nabla\ol\chi\|_{H^1}\ds
							\Big)\\
							\leq{}&
							C\big\|\ol b\big\|_{L^2(0,\ol t_\tau;H^{1})}^2
								+\delta\int_0^{\ol t_\tau}\|\ol b\|_{L^2}^2\|\nabla\ol\chi\|_{H^1}^2\ds
								+C_\delta\|\ol b\|_{L^2(0,\ol t_\tau;H^1)}^2\\
							\leq{}&
							C_\delta\big\|\ol b\big\|_{L^2(0,\ol t_\tau;H^{1/2}(\Gamma;\Rn))}^2
								+\delta\int_0^{\ol t_\tau}\|\ol b\|_{H^{1/2}(\Gamma;\Rn)}^2\|\nabla\ol\chi\|_{H^1}^2\ds.
					\end{align*}
					as well as
					\begin{align*}
						T_2^{(1,2)}\leq{}&
							C\big\|\frac{\mathsf{c}(\ol\chi)}{\mathsf{d}(\ol\chi)}\CC\e(\ol u)\cdot\nu\big\|_{L^2(0,\ol t_\tau;H^{1})}^2\\
							\leq{}&
							C\Big(\|\e(\ol u)\|_{L^2(0,\ol t_\tau;L^2)}^2
								+\||\e(\ol u)||\nabla\ol\chi|\|_{L^2(0,\ol t_\tau;L^2)}^2
								+\|\nabla(\CC\e(\ol u))\|_{L^2(0,\ol t_\tau;L^2)}^2
							\Big)\\
							\leq{}&
								C\Big(1+\int_0^{\ol t_\tau}\|\e(\ol u)\|_{L^4}^2\|\nabla\ol\chi\|_{L^4}^2\ds
								+\|\ol u\|_{L^2(0,\ol t_\tau;H^2)}^2\Big)\\
							\leq{}&
								C\Big(1+\int_0^{\ol t_\tau}\|\e(\ol u)\|_{L^2}\|\e(\ol u)\|_{H^1}\|\nabla\ol\chi\|_{L^2(\Omega)}\|\nabla\ol\chi\|_{H^1}\ds
								+\|\ol u\|_{L^2(0,\ol t_\tau;H^2)}^2\Big)\\
							\leq{}&
								C\Big(1+\int_0^{\ol t_\tau}\big(\|\e(\ol u)\|_{H^1}^2+\|\nabla\ol\chi\|_{H^1}^2\big)\ds
								+\|\ol u\|_{L^2(0,\ol t_\tau;H^2)}^2\Big)\\
							\leq{}&
								C\Big(1+\int_0^{\ol t_\tau}\big(\|\ol u\|_{H^2}^2+\|\nabla\ol\chi\|_{H^1}^2\big)\ds\Big).
					\end{align*}
				We estimate the remaining terms in \eqref{eqn:T2term} by using again the Gagliardo-Nirenberg type inequality \eqref{eqn:GN}, the boundedness
				of $\mathsf{c}$, $\mathsf{c}'$ and $\mathsf{d}$ (see (A2) and (A3)),
				and the boundedness of $\|\nabla\ol \chi\|_{L^\infty(0,T;L^2)}$
				and $\|\e(\ol u)\|_{L^\infty(0,T;L^2)}$ (see First a priori estimates)
				\begin{align*}
					T_2^{(2)}
						\leq{}& C\int_0^{\ol t_\tau}\big(\|\e(\ol u)\|_{H^1}^2+\|\nabla\ol\chi\|_{H^1}^2\big)\ds,\\
					T_2^{(3)}\leq{}& C\int_0^{\ol t_\tau}\|\ol u\|_{H^2}^2\ds,\\
					T_2^{(4)}\leq{}& C\int_0^{\ol t_\tau}\|\nabla \ol\chi\|_{L^4}^2\|\e(\ol v)\|_{L^4}^2\ds\\
						\leq{}& C\int_0^{\ol t_\tau}\|\nabla \ol\chi\|_{L^2}\|\nabla \ol\chi\|_{H^1}\|\e(\ol v)\|_{L^2}\|\e(\ol v)\|_{H^1}\ds\\
						\leq{}& C_\delta\int_0^{\ol t_\tau}\|\e(\ol v)\|_{L^2}^2\|\nabla \ol\chi\|_{H^1}^2\ds
							+\delta\|\e(\ol v)\|_{L^2(0,\ol t_\tau;H^1)}^2.
				\end{align*}

				\item[]\hspace*{-1.5em}-- To $(T_3)$:
				It can be seen by integration by parts and from the definition of $I_\beta$ (see Definition \ref{def:regularization} (iii)-(iv)) that
				\begin{align*}
					T_3=\int_0^{\ol t_\tau}\int_\Omega\xi_\beta'(\partial_t\chi)|\nabla\partial_t\chi|^2\dxs\geq 0.
				\end{align*}
				\item[]\hspace*{-1.5em}-- To $(T_4)$:
				The term $T_4$ can be treated by applying the Gagliardo-Nirenberg type inequality \eqref{eqn:GN},
				by using the boundedness of $\mathsf{c}_1'$ and of $\mathsf{c}_2'$ (see assumption (A2))
				and by using boundedness of $\|\e(\ul u)\|_{L^\infty(0,T;L^2)}$ (see First a priori estimate).
				We obtain
				\begin{align*}
					T_4\geq{}&-\delta\|\Delta\partial_t\chi\|_{L^2(0,\ol t_\tau;L^2)}^2-C_\delta\int_0^{\ol t_\tau}\|\e(\ul u)\|_{L^4}^4\ds\\
					\geq{}&-\delta\|\Delta\partial_t\chi\|_{L^2(0,\ol t_\tau;L^2)}^2-C_\delta\int_0^{\ol t_\tau}\|\e(\ul u)\|_{L^2}^2\|\e(\ul u)\|_{H^1}^2\ds\\
					\geq{}&-\delta\|\Delta\partial_t\chi\|_{L^2(0,\ol t_\tau;L^2)}^2-C_\delta\|\e(\ul u)\|_{L^2(0,\ol t_\tau;H^1)}^2.
				\end{align*}
				\item[]\hspace*{-1.5em}-- To $(T_5)$:
				We find by Young's inequality, by the Lipschitz continuity of $f'$ (see (A4)) and by the
				boundedness of $\|\ol\chi\|_{L^2(0,T;L^2)}$ (see First a priori estimates):
				\begin{align*}
					T_5\geq{}&-\delta\|\Delta\partial_t\chi\|_{L^2(0,\ol t_\tau;L^2)}^2-C_\delta(\|\ol\chi\|_{L^2(0,T;L^2)}^2+1)\\
						\geq{}&-\delta\|\Delta\partial_t\chi\|_{L^2(0,\ol t_\tau;L^2)}^2-C_\delta.
				\end{align*}
			\end{itemize}
			In the following we use the estimates (by the fundamental theorem of calculus and H\"older's inequality)
			\begin{subequations}
			\label{eqn:FTEst}
			\begin{align}
				&\|\ol u(s)\|_{H^2}^2=\Big\|u^0+\int_0^{\ol s_\tau}\partial_t u(\iota)\mathrm d\iota\Big\|_{H^2}^2
					\leq\|u^0\|_{H^2}^2+\ol s_\tau\|\ol v\|_{L^2(0,\ol s_\tau;H^2)}^2,\\
				&\|\ol\chi(s)\|_{H^2}^2
					\leq\|\chi^0\|_{H^2}^2+\ol s_\tau\|\partial_t\chi\|_{L^2(0,\ol s_\tau;H^2)}^2.
			\end{align}
			\end{subequations}			
			Now we conclude by taking the above estimates into account:
			\begin{align*}
				T_1\geq{}&\widetilde\eta\|\ol v(t)\|_{H^1}^2
					-D_\delta
					-\delta\big(\|u\|_{H^1(0,\ol t_\tau;H^2)}^2+\|\ol v(t)\|_{H^1}^2\big)
					-\delta\int_0^{\ol t_\tau}\|\partial_t\chi(s+\tau)\|_{H^1}^2\|\ol v(s)\|_{H^1}^2\ds,\\
				T_2\geq{}&\widetilde\eta\|u\|_{H^1(0,\ol t_\tau;H^2)}^2
					-D_\delta
					-C_{\delta}\int_0^{\ol t_\tau}\Big(\|\ol\chi\|_{H^2}^2+\|u\|_{H^1(0,\ol s_\tau;H^2)}^2
						+\ol t_\tau\|\ol v\|_{H^1}^2\|\partial_t\chi\|_{L^2(0,\ol s_\tau;H^2)}^2\Big)\ds\\
					&-\delta\|u\|_{H^1(0,\ol t_\tau;H^2)}^2
						-\delta\int_0^{\ol t_\tau}\|\ol b\|_{H^{1/2}(\Gamma;\Rn)}^2\|\ol\chi\|_{H^2}^2\ds,\\
				T_3\geq{}&0,\\
				T_4\geq{}&-D_\delta-C_\delta\int_0^{\ol t_\tau}\|u\|_{H^1(0,\ol s_\tau;H^2)}^2\ds-\delta\|\chi\|_{H^1(0,\ol t_\tau;H^2)}^2,\\
				T_5\geq{}&-C_{\delta}-\delta\|\chi\|_{H^1(0,\ol t_\tau;H^2)}^2,
			\end{align*}
			where the constant $C_\delta>0$ continuously depends on (besides $\delta$)
			$$
				C_\delta=C_\delta\big(\|u^0\|_{H^1},\|v^0\|_{L^2},\|\chi^0\|_{H^1}, \|b\|_{L^{2}(0,T;L^{2}(\Gamma;\Rn))},\|\ell\|_{L^2(0,T;L^2)}\big)
			$$
			and the constant $D_\delta>0$ continuously depends on (besides $\delta$)
			$$
				D_\delta=D_\delta\big(\|u^0\|_{H^2},\|v^0\|_{H^1},\|\chi^0\|_{H^2}, \|b\|_{L^{2}(0,T;H^{1/2}(\Gamma;\Rn))\cap H^{1}(0,T;L^{2}(\Gamma;\Rn))},\|\ell\|_{L^2(0,T;L^2)}\big).
			$$

			By adding the identities \eqref{eqn:est6} and \eqref{eqn:est4}, using the estimates for $T_1,\ldots,T_5$ developed above,
			using the $H^2$-regularity estimate (see \cite[Theorem 3.4.1]{CDN10})
			$$
				\|w\|_{H^2}^2\leq C\big(\|\Delta w\|_{L^2}^2+\|w\|_{H^1}^2\big)\quad\text{valid for all }w\in H_{N}^2(\Omega)
			$$
			applied to $\ol\chi$ and $\partial_t\chi$ (note the boundary conditions in \eqref{eqn:disc5a}),
			we obtain
			\begin{align}
				&\|\ol v(t)\|_{H^1}^2
					+\|\ol\chi(t)\|_{H^2}^2
					+\|u\|_{H^1(0,\ol t_\tau;H^2)}^2
					+\|\chi\|_{H^1(0,\ol t_\tau;H^2)}^2\notag\\
					&+\|\di\big(\mathsf{c}(\ol\chi)\CC\e(\ol u)+\mathsf{d}(\ol\chi)\DD\e(\ol v)\big)\|_{L^2(0,\ol t_\tau;L^2)}^2\notag\\
				&\leq D_\delta
						+C_{\delta}\int_0^{\ol t_\tau}\Big(\|\ol\chi\|_{H^2}^2+\|u\|_{H^1(0,\ol s_\tau;H^2)}^2+\ol t_\tau\|\ol v\|_{H^1}^2\|\partial_t\chi\|_{L^2(0,\ol s_\tau;H^2)}^2\Big)\ds\notag\\
					&\quad
						+\delta\Big(\|u\|_{H^1(0,\ol t_\tau;H^2)}^2+\|\ol v(t)\|_{H^1}^2
						+\|\di\big(\mathsf{c}(\ol\chi)\CC\e(\ol u)+\mathsf{d}(\ol\chi)\DD\e(\ol v)\big)\big)\|_{L^2(0,\ol t_\tau;L^2)}^2
						+\|\chi\|_{H^1(0,\ol t_\tau;H^2)}^2\Big)\notag\\
					&\quad
						+\delta\int_0^{\ol t_\tau}\|\partial_t\chi(s+\tau)\|_{H^1}^2\|\ol v(s)\|_{H^1}^2\ds
						+\delta\int_0^{\ol t_\tau}\|\ol b\|_{H^{1/2}(\Gamma;\Rn)}^2\|\ol\chi\|_{H^2}^2\ds.
			\label{eqn:finalEst}
			\end{align}

			By choosing $\delta>0$ small, the first $\delta$-term on the right-hand side of
			\eqref{eqn:finalEst} can be absorbed by the left-hand side.
			Furthermore, for later estimates, $\delta$ should also satisfy
			\begin{align}
				\delta<\frac{1}{8\big(\|\partial_t\chi\|_{L^2(0,T;H^1)}^2+\|\ol b\|_{L^2(0,T;H^{1/2}(\Gamma;\Rn))}^2+1\big)}.
			\label{eqn:deltaEst}
			\end{align}
			Indeed, the denominator of the right-hand side is bounded from above
			by the First a priori estimates, hence the right-hand side is bounded from below
			and $\delta>0$ can be chosen such that \eqref{eqn:deltaEst} holds.

			We infer from the estimates \eqref{eqn:finalEst} and \eqref{eqn:deltaEst}
			\begin{align}
			\label{eqn:finalEst2}
				\alpha^k\leq D_\delta+\sum_{j=1}^k\tau\gamma^j\alpha^j
			\end{align}
			with
			\begin{align*}
				\alpha^k:={}&\|v^k\|_{H^1}^2
					+\|\chi^k\|_{H^2}^2
					+\|u\|_{H^1(0,\tau k;H^2)}^2
					+\|\chi\|_{H^1(0,\tau k;H^2)}^2,\\
				\gamma^k:={}&C_\delta
					+\frac{\|(\chi^{k+1}-\chi^{k})/\tau\|_{H^1}^2+\|b^k\|_{H^{1/2}(\Gamma;\Rn)}^2}{8(\|\partial_t\chi\|_{L^2(0,T;H^1)}^2
					+\|\ol b\|_{L^2(0,T;H^{1/2}(\Gamma;\Rn))}^2+1)}
					+C_\delta \tau k\|v^k\|_{H^1}^2.
			\end{align*}
			
			
			In the following, we will choose a time $t_0>0$ such that
			for all small $\tau>0$ and all $k=1,\ldots,{(\ol t_0)}_\tau/\tau$:
			\begin{align}
				0\leq\tau\gamma^k<\frac 12.
			\label{eqn:gammaEst}
			\end{align}
			Indeed,
			we know by the First a priori estimate that
			\begin{align*}
				&\sum_{k=1}^{M}\tau\|v^k\|_{H^1}^2<\widehat C
				\qquad\text{uniformly in $\tau$,}
			\end{align*}
			where $\widehat C>0$ denotes the constant $C$ in \eqref{eqn:aprioriC}.
			Thus
			\begin{align}
				\tau \|v^k\|_{H^1}^2<\widehat C
				\qquad\text{uniformly in $\tau$ and in $k$.}
			\label{eqn:omegaEst}
			\end{align}
			By choosing
			\begin{align}
			\label{eqn:tZeroConst}
				t_0:=\frac{1}{4C_\delta \widehat C},
			\end{align}
			we get for all $k=1,\ldots,{(\ol t_0)}_\tau/\tau$:
			\begin{align*}
				\tau\gamma^k
				={}&\tau C_\delta+\frac{\tau \|(\chi^{k+1}-\chi^{k})/\tau\|_{H^1}^2+\tau\|b^k\|_{H^{1/2}(\Gamma;\Rn)}^2}{8\big(\sum_{j=0}^{M-1}\big(\tau\|(\chi^{j+1}-\chi^{j})/\tau\|_{H^1}^2+\tau\|b^{j+1}\|_{H^{1/2}(\Gamma;\Rn)}^2\big)+1\big)}\\
					&+\tau C_\delta\;\times\underbrace{\tau k}_{\leq{(\ol t_0)}_\tau\leq t_0+\tau}\times\;\|v^k\|_{H^1}^2\\
				\leq{}& \tau C_\delta+\frac18+\tau C_\delta(t_0+\tau)\|v^k\|_{H^1}^2\\
				\leq{}&\tau C_\delta+\frac18+\frac{1}{4}+\tau C_\delta\widehat C\\
				\leq{}& \tau C_\delta(1+\widehat C)+\frac18+\frac{1}{4}.
			\end{align*}
			Consequently, for small $\tau>0$, estimate \eqref{eqn:gammaEst} is fulfilled.
			
			Finally, by ensuring \eqref{eqn:gammaEst}, \eqref{eqn:finalEst2} rewrites in the desired form
			\begin{align*}
				\alpha^k\leq \frac{D_\delta}{1-\tau\gamma^k}+\sum_{j=1}^{k-1}\tau\frac{\gamma^j}{1-\tau\gamma^k}\alpha^j
			\end{align*}
			and, therefore,
			\begin{align*}
				\alpha^k\leq \frac{D_\delta}{2}+\sum_{j=1}^{k-1}\tau\frac{\gamma^j}{2}\alpha^j.
			\end{align*}
			We are now in a position to apply the discrete version of Gronwall's lemma in the sum form (see, e.g., \cite[page 26]{Rou13}) and obtain
			\begin{align*}
				\alpha^k\leq\frac{D_\delta}{2}e^{\sum_{j=1}^{k-1}\tau\frac{\gamma^j}{2}}.
			\end{align*}
			We obtain boundedness of $\alpha^k$ uniformly in $\tau$ and $k=1,\ldots, (\ol t_0)_\tau/\tau$.
			Therefore, (ii) is shown except the boundedness for $\|\xi_\beta(\partial_t\chi_{\tau,\beta})\|_{L^2(0,(\ol t_0)_\tau;L^2)}$.
			The latter follows by a comparison argument in \eqref{eqn:disc2}.

		\item[]\hspace*{-1em}\underline{To (ii) -- global-in-time estimate:}\\
		
		The main observation to obtain global-in-time estimates is that
		the local estimates above can not only be performed on the time interval
		$[0,(\ol t_0)_\tau]$ but also, with minor modifications, to each interval $[\ul s_\tau, \ol t_\tau]\subseteq [0,T]$
		such that $|\ol t_\tau-\ul s_\tau|\leq t_0$, where $t_0>0$ from \eqref{eqn:tZeroConst} depends on
		quantities which can be bounded globally in time by the First a priori estimates.
		Thus we find a $t_0>0$ such that the Second a priori estimates can be performed
		on each interval interval $[\ul s_\tau, \ol t_\tau]\subseteq[0,T]$ with $|\ol t_\tau-\ul s_\tau|\leq t_0$.
		
		To conclude the proof, let
		\begin{align*}
			&\frak t_\tau^k:=\left(\ul{k\frac{t_0}{2}}\right)_\tau
				=\max\Big\{j\tau\,\big|\,j\in\N\text{ such that }k\frac{t_0}{2}\geq\tau j\Big\},\\
			&\frak l_\tau:=\left(\ol{t_0}\right)_\tau
				=\min\Big\{j\tau\,\big|\,j\in\N\text{ such that }t_0\leq\tau j\Big\}.
		\end{align*}
		We
		define the time intervals
		\begin{align*}
			I_\tau^k:=[\frak t_\tau^k,\frak t_\tau^k+\frak l_\tau]\cap[0,T]
		\end{align*}
		for all $k=0,\ldots,N$ with $N:=\lceil T/(t_0/2)\rceil-1$ where
		$\lceil\cdot\rceil$ denotes the ceiling function.
		
		
		We apply the local-in-time estimates above to each interval $I_\tau^k$ and
		obtain constants $C_0,\ldots, C_N>0$ which continuously depend on
		\begin{align*}
			C_k=C_k\big(&\|\ol u_{\tau,\beta}(\frak t_\tau^k)\|_{H^2},\|\ol v_{\tau,\beta}(\frak t_\tau^k)\|_{H^1},
			\|\ol \chi_{\tau,\beta}(\frak t_\tau^k)\|_{H^2},\\
				&\|b\|_{L^{2}(0,T;H^{1/2}(\Gamma;\Rn))\cap H^{1}(0,T;L^{2}(\Gamma;\Rn))}, \|\ell\|_{L^2(0,T;L^2)}\big),\qquad k=1,\ldots,N
		\end{align*}
		such that for all $\tau,\beta>0$ and all $k=1,\ldots,N$
		\begin{align*}
			&		\|u_{\tau,\beta}\|_{H^1(I_\tau^k;H^{2})\cap W^{1,\infty}(I_\tau^k;H^1)}\leq C_k,
			&&	\|\chi_{\tau,\beta}\|_{H^1(I_\tau^k;H^{2})}\leq C_k,\\
			&		\|\ul{u}_{\tau,\beta}\|_{L^\infty(I_\tau^k;H^{2})}\leq C_k,
			&&	\|\ul \chi_{\tau,\beta}\|_{L^\infty(I_\tau^k;H^{2})}\leq C_k,\\
			&		\|\ol{u}_{\tau,\beta}\|_{L^\infty(I_\tau^k;H^{2})}\leq C_k,
			&&	\|\ol \chi_{\tau,\beta}\|_{L^\infty(I_\tau^k;H^{2})}\leq C_k,\\
			&		\|v_{\tau,\beta}\|_{L^2(I_\tau^k;H^2)\cap L^{\infty}(I_\tau^k;H^1)\cap H^1(I_\tau^k;L^{2})}\leq C_k,
			&&	\|\xi_\beta(\partial_t\chi_{\tau,\beta})\|_{L^2(I_\tau^k;L^2)}\leq C_k.
		\end{align*}
		
		To obtain a global bound, we can argue by induction. We sketch the argument:
		
		Suppose we have given the a priori bound $C_{k-1}$ for the time interval $I_\tau^{k-1}$.
		By definition, we find $\frak t_\tau^k\in I_\tau^{k-1}$.
		Thus
		\begin{align*}
			&\|\ol u_{\tau,\beta}(\frak t_\tau^k)\|_{H^2} \leq \|\ol{u}_{\tau,\beta}\|_{L^\infty(I_\tau^{k-1};H^{2})}\leq C_{k-1},\\
			&\|\ol v_{\tau,\beta}(\frak t_\tau^k)\|_{H^1} \leq \|\ol v_{\tau,\beta}\|_{L^{\infty}(I_\tau^{k-1};H^1)}\leq C_{k-1},\\
			&\|\ol \chi_{\tau,\beta}(\frak t_\tau^k)\|_{H^2}\leq \|\ol \chi_{\tau,\beta}\|_{L^\infty(I_\tau^{k-1};H^{2})}\leq C_{k-1}.
		\end{align*}
		Consequently, we find an a priori bound $\widetilde C_k\geq C_k$ for the solutions on the interval $I_\tau^k$ by
		$$
			\widetilde C_k:=\max_{|x|,|y|,|z|\leq C_{k-1}} C_{k}(x,y,z, \|b\|_{L^{2}(0,T;H^{1/2}(\Gamma;\Rn))\cap H^{1}(0,T;L^{2}(\Gamma;\Rn))},\|\ell\|_{L^2(0,T;L^2)}).
		$$
		Note that $\widetilde C_k$ does only depend on
		\begin{align*}
			\widetilde C_k=\widetilde C_k\big(&\|\ol u_{\tau,\beta}(\frak t_\tau^{k-1})\|_{H^2},\|\ol v_{\tau,\beta}(\frak t_\tau^{k-1})\|_{H^1},
				\|\ol \chi_{\tau,\beta}(\frak t_\tau^{k-1})\|_{H^2},\\
 				&\|b\|_{L^{2}(0,T;H^{1/2}(\Gamma;\Rn))\cap H^{1}(0,T;L^{2}(\Gamma;\Rn))},\|\ell\|_{L^2(0,T;L^2)}\big).
		\end{align*}

		\ep
	\end{enumerate}
	
	We perform the limit passage $\tau\downarrow 0$ and $\beta\downarrow 0$ separately in order to
	show existence of strong solutions for both cases: namely for $\beta>0$ and $\beta=0$ in Definition \ref{def:notionSolution} (i) and (i).
	The a~priori estimates give rise to the following convergence properties along a suitably chosen subsequence.
	\begin{lemma}[Convergence properties]
	\label{lemma:conv}\hspace*{0.01em}\\
		There exist limit functions for every $\beta\geq 0$ (we will also write $u:=u_0$, $\chi:=\chi_0$)
		\begin{align*}
			&u_\beta\in H^1(0,T;H^2(\Omega;\R^n))\cap W^{1,\infty}(0,T;H^1(\Omega;\R^n))\cap H^2(0,T;L^2(\Omega;\R^n)),\\
			&\chi_\beta\in H^1(0,T;H^2(\Omega))
		\end{align*}
		with
		\begin{align*}
			&u_\beta(0)=u^0\text{ a.e. in }\Omega,&&\partial_t u_\beta(0)=v^0\text{ a.e. in }\Omega,
			&&\chi_\beta(0)=\chi^0\text{ a.e. in }\Omega,\\
			&u_\beta=b\text{ a.e. on }\Sigma
		\end{align*}
		such that
		\begin{itemize}
			\item[(i)]
			for fixed $\beta>0$ and $\tau\downarrow 0$ (along a subsequence):
			\begin{subequations}
			\begin{align}
			\label{eqn:uConv1}
				&u_{\tau,\beta}\to u_\beta&&\textit{ weakly in }H^1(0,T;H^2(\Omega;\R^n))\\
					&&&\textit{ weakly-star in } W^{1,\infty}(0,T;H^1(\Omega;\R^n)),\\
			\label{eqn:uConv1b}
				&\overline{u}_{\tau,\beta},\underline{u}_{\tau,\beta}\to u_\beta&&\textit{ weakly-star in }L^\infty(0,T;H^2(\Omega;\R^n)),\\
			\label{eqn:uConv2}
				&u_{\tau,\beta}\to u_\beta&&\textit{ strongly in }H^1(0,T;H^{1}(\Omega;\R^n)),\\
			\label{eqn:uConv3}
				&\overline{u}_{\tau,\beta},\underline{u}_{\tau,\beta}\to u_\beta&&\textit{ strongly in }L^\infty(0,T;H^{1}(\Omega;\R^n)),\\
			\label{eqn:uConv4}
				&u_{\tau,\beta}, \overline{u}_{\tau,\beta},\underline{u}_{\tau,\beta}\to u_\beta&&\textit{ a.e. in }\Omega\times(0,T),\\
				&v_{\tau,_\beta}\to \partial_t u_\beta&&\textit{ weakly in }H^1(0,T;L^2(\Omega;\R^n)),\\
				&\chi_{\tau,\beta}\to\chi_\beta&&\textit{ weakly in } H^1(0,T;H^2(\Omega)),\\
					&\overline{\chi}_{\tau,\beta},\underline{\chi}_{\tau,\beta}\to\chi_\beta&&\textit{ weakly-star in }L^\infty(0,T;H^{2}(\Omega)),\\
			\label{eqn:chiConv4a}
					&\overline{\chi}_{\tau,\beta},\underline{\chi}_{\tau,\beta}\to\chi_\beta&&\textit{ strongly in }L^\mu(0,T;H^{1}(\Omega))\text{ for all }\mu\geq 1,\\
			\label{eqn:chiConv4}
					&\overline{\chi}_{\tau,\beta},\underline{\chi}_{\tau,\beta}\to\chi_\beta&&\textit{ uniformly on }\ol\Omega\times[0,T],\\
			\label{eqn:xiConv1}
				&\xi_\beta(\partial_t\chi_{\tau,\beta})\to\xi_\beta(\partial_t\chi_\beta)&&\textit{ weakly in }L^2(0,T;L^{2}(\Omega)),
			\end{align}
			\end{subequations}
		\item[(ii)] for $\beta\downarrow 0$ (along a subsequence):
			\begin{subequations}
			\begin{align}
				&u_{\beta}\to u&&\textit{ weakly in }H^2(0,T;L^2(\Omega;\R^n))\cap H^1(0,T;H^2(\Omega;\R^n))\\
				&&&\textit{ weakly-star in }W^{1,\infty}(0,T;H^1(\Omega;\R^n)),\\
				&u_{\beta}\to u&&\textit{ strongly in }H^1(0,T;H^{1}(\Omega;\R^n)),\\
				&u_{\beta}\to u&&\textit{ a.e. in }\Omega\times(0,T),\\
			\label{eqn:chiWeakConv}
				&\chi_{\beta}\to\chi&&\textit{ weakly in }H^1(0,T;H^2(\Omega)),\\
				&\chi_{\beta}\to\chi&&\textit{ strongly in }L^\mu(0,T;H^{1}(\Omega))\text{ for all $\mu\geq 1$},\\
				&\chi_{\beta}\to\chi&&\textit{ uniformly on }\ol \Omega\times[0,T],\\
				&\xi_\beta(\partial_t\chi_{\beta})\to\xi&&\textit{ weakly in }L^2(0,T;L^{2}(\Omega))\text{ for a }\xi\in\L^2(0,T,L^2(\Omega))\notag\\
				&&&\;\,\text{with }\xi\in\partial I_{(-\infty,0]}(\partial_t\chi)\text{ a.e. in }\Omega\times(0,T).
				\label{eqn:xiConv2}
			\end{align}
			\end{subequations}
		\end{itemize}
	\end{lemma}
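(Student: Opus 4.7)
The plan is to extract convergent subsequences from the uniform bounds in Lemma~\ref{lemma:aPrioriDiscr}, upgrade weak to strong convergence via Aubin--Lions, and then identify the subgradient limit in two stages ($\tau\downarrow0$ with $\beta$ fixed, then $\beta\downarrow0$). The main obstacle is the last one, since the nonlinearity $\xi_\beta$ does not commute with weak convergence.

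\textbf{Weak compactness.} The Second a priori estimate of Lemma~\ref{lemma:aPrioriDiscr} provides bounds that are uniform in both $\tau$ and $\beta$ on the reflexive spaces appearing in (i) and (ii). Banach--Alaoglu then immediately yields weakly/weakly-$\star$ convergent subsequences $u_{\tau,\beta}\rightharpoonup u_\beta$ in $H^1(0,T;H^2)\cap H^2(0,T;L^2)$ and $\chi_{\tau,\beta}\rightharpoonup\chi_\beta$ in $H^1(0,T;H^2)$, together with $\partial_t u_{\tau,\beta}\weakstarlim\partial_t u_\beta$ in $L^\infty(0,T;H^1)$ and the weak $L^2$-limit of $\xi_\beta(\partial_t\chi_{\tau,\beta})$. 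The piecewise-constant interpolants inherit the same limit from the piecewise-linear one via the elementary estimate
\[
\|\overline h_\tau-h_\tau\|_{L^2(0,T;X)}+\|\underline h_\tau-h_\tau\|_{L^2(0,T;X)}\le C\tau\,\|\partial_t h_\tau\|_{L^2(0,T;X)}
\]
applied with $X\in\{L^2,H^1,H^2\}$, and the identification $v_{\tau,\beta}\rightharpoonup\partial_t u_\beta$ is built into definition \eqref{eqn:velocities}. For the limit $\beta\downarrow0$, the same bounds apply since $D$ is $\beta$-independent.

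\textbf{Strong convergence, pointwise a.e.\ limits, initial and boundary conditions.} Aubin--Lions with the chain $H^2(\Omega)\hookrightarrow\hookrightarrow H^1(\Omega)\hookrightarrow L^2(\Omega)$ upgrades the $H^1(0,T;H^2)\cap H^2(0,T;L^2)$ bound on $u_{\tau,\beta}$ to strong convergence in $H^1(0,T;H^1)$; combining with $W^{1,\infty}(0,T;H^1)\cap L^\infty(0,T;H^2)$ yields $\overline u_{\tau,\beta},\underline u_{\tau,\beta}\to u_\beta$ strongly in $L^\infty(0,T;H^1)$. For $\chi$, the boundedness in $H^1(0,T;H^2)\cap L^\infty(0,T;H^2)$ together with the two-dimensional Sobolev embedding $H^{2-\varepsilon}(\Omega)\hookrightarrow C(\overline\Omega)$ for small $\varepsilon>0$ gives compact embedding into $C(\overline\Omega\times[0,T])$; the stated $L^\mu(0,T;H^1)$-convergence then follows by interpolation between the $L^\infty(0,T;H^2)$-bound and the strong $L^2$-convergence. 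Pointwise a.e.\ limits are obtained by further subsequences. The initial conditions pass to the limit since $u_\beta,\chi_\beta$ have continuous traces at $t=0$ in $H^1(\Omega)$, $u_{\tau,\beta}(0)=u^0$, $\chi_{\tau,\beta}(0)=\chi^0$ by construction, and $\partial_t u_{\tau,\beta}(0)=v_\tau^0\to v^0$ by Lemma~\ref{lemma:dataConv}. The Neumann identities \eqref{eqn:disc4}--\eqref{eqn:disc5} are preserved in the limit because the trace operator is continuous with respect to the weak convergences above.

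\textbf{Identification of the subgradient.} For part (i) with $\beta>0$ fixed, $\xi_\beta=I_\beta'$ is monotone and Lipschitz, so a Minty-type argument works: for every $w\in L^2(Q)$,
\[
\int_Q\bigl(\xi_\beta(\partial_t\chi_{\tau,\beta})-\xi_\beta(w)\bigr)(\partial_t\chi_{\tau,\beta}-w)\dx\dt\ge0,
\]
and the diagonal term $\int_Q\xi_\beta(\partial_t\chi_{\tau,\beta})\partial_t\chi_{\tau,\beta}\dx\dt$ is controlled by testing the discrete damage equation \eqref{eqn:disc2} with $\partial_t\chi_{\tau,\beta}$; passing to the limit via the strong convergences of Step~2 and the weak lower semi-continuity of the resulting convex energy identifies the limit as $\xi_\beta(\partial_t\chi_\beta)$, giving \eqref{eqn:xiConv1}. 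For part (ii), the Moreau--Yosida structure $\xi_\beta(x)=\max(x,0)/\beta$ is the key: the $\beta$-uniform $L^2$-bound on $\xi_\beta(\partial_t\chi_\beta)$ forces
\[
\|\max(\partial_t\chi_\beta,0)\|_{L^2(Q)}\le\beta\,D\longrightarrow0,
\]
so $\partial_t\chi\le0$ a.e.\ in $Q$. Since $\xi_\beta\ge0$, the weak $L^2$-limit $\xi$ is non-negative; combining this with a second Minty-type argument (exploiting $\xi_\beta(w)=0$ whenever $w\le0$) yields $\xi\cdot\partial_t\chi=0$ a.e., hence $\xi\in\partial I_{(-\infty,0]}(\partial_t\chi)$ as in \eqref{eqn:xiConv2}. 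The principal difficulty lies precisely in obtaining a matching upper bound for the energy integral $\int_Q\xi_\beta(\partial_t\chi_{\tau,\beta})\partial_t\chi_{\tau,\beta}$, which is what justifies the $\limsup$-step in the Minty argument.
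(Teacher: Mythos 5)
Your overall architecture matches the paper's: uniform bounds from Lemma \ref{lemma:aPrioriDiscr} plus Banach--Alaoglu for the weak/weak-star limits, an Aubin--Lions/Simon compactness argument for the strong and uniform convergences of the interpolants, and, for \eqref{eqn:xiConv1}, Minty's trick where the crucial $\limsup$-estimate for $\int_Q\xi_\beta(\partial_t\chi_{\tau,\beta})\,\partial_t\chi_{\tau,\beta}$ is obtained by testing \eqref{eqn:disc2} with $\partial_t\chi_{\tau,\beta}$, using weak lower semicontinuity of the quadratic terms and identifying the weak limit $\eta_\beta$ with the strong form of the limit equation. Up to and including part (i), this is essentially the paper's proof.

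There is, however, a gap in your part (ii): you identify $\partial_t\chi\le 0$ by invoking the explicit Moreau--Yosida formula $\xi_\beta(x)=\max(x,0)/\beta$, so that the $\beta$-uniform $L^2$-bound on $\xi_\beta(\partial_t\chi_\beta)$ gives $\|\max(\partial_t\chi_\beta,0)\|_{L^2(Q)}\le \beta D\to 0$. But the lemma is stated for any regularization satisfying Definition \ref{def:regularization} (i)--(iv); the paper explicitly allows, e.g., $C^\infty$ families, for which no such pointwise formula is available and your estimate need not hold. The paper's argument avoids this: from the convexity inequality $I_\beta(\partial_t\chi_\beta)+\langle\xi_\beta(\partial_t\chi_\beta),v-\partial_t\chi_\beta\rangle\le I_\beta(v)$ with $v\le 0$ (so $I_\beta(v)=0$) and the uniform $L^2$-bounds one gets $I_\beta(\partial_t\chi_\beta)\le C$ uniformly in $\beta$, and then the monotone blow-up property $I_\beta\uparrow\infty$ on $[0,\infty)$ (Definition \ref{def:regularization} (ii)) together with weak lower semicontinuity forces $\partial_t\chi\le 0$ in the limit; the same convexity inequality, combined with the $\limsup$-estimate, yields $\langle\eta,v-\partial_t\chi\rangle\le 0$ for all $v\le 0$, which is the variational characterization of $\eta\in\partial I_{(-\infty,0]}(\partial_t\chi)$ used in \eqref{eqn:xiConv2}. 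Your sketch of the complementarity step (``a second Minty-type argument'') is also left vague; it can be made rigorous either along the paper's lines just described, or by noting $\xi_\beta(\partial_t\chi_\beta)\,\partial_t\chi_\beta\ge 0$ (by (iii)--(iv) of Definition \ref{def:regularization}) so that the $\limsup$-estimate gives $\langle\xi,\partial_t\chi\rangle\ge 0$ while pointwise $\xi\,\partial_t\chi\le 0$ once $\xi\ge 0$ and $\partial_t\chi\le 0$ are known --- but the latter sign information must first be obtained without appealing to the Moreau--Yosida formula.
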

	\begin{proof}
		\begin{itemize}
		\item[]\hspace*{-1em}To (i):
		Properties \eqref{eqn:uConv1}-\eqref{eqn:uConv1b} and \eqref{eqn:uConv4}-\eqref{eqn:chiConv4a} can be obtained by standard compact embeddings,
		whereas \eqref{eqn:uConv2}, \eqref{eqn:uConv3} and \eqref{eqn:chiConv4} can be obtained by the Aubin-Lions type compactness result in \cite{Simon}
		(please note that $\ol v_{\tau,\beta}=\partial_t u_{\tau,\beta}$).
		It remains to show \eqref{eqn:xiConv1}.
		
		By Lemma \ref{lemma:aPrioriDiscr}, we find a cluster point $\eta_\beta\in L^2(0,T;L^{2}(\Omega))$ such that
		along a subsequence $\tau\downarrow 0$
		\begin{align}
			\xi_\beta(\partial_t\chi_{\tau,\beta})\to\eta_\beta\text{ weakly in }L^2(0,T;L^{2}(\Omega)).
		\label{eqn:etaWeakConv}
		\end{align}
		We have to show $\eta_\beta=\xi_\beta(\partial_t\chi_\beta)$ to finish the proof.
		This will be achieved by exploiting maximal monotonicity
		of the graph $\xi_\beta$ (also known as \textit{Minty's trick}).
		At first we observe that due to the monotonicity of $\xi_\beta$ viewed as an operator
		from $L^2(0,T;L^2(\Omega))$ to $L^2(0,T;L^2(\Omega))'$ one has
		\begin{align*}
			\big\langle \xi_\beta(\partial_t\chi_{\tau,\beta})-\xi_\beta(v),\partial_t\chi_{\tau,\beta}-v\big\rangle_{L^2(L^2)}\geq 0
		\end{align*}
		for all functions $v\in L^2(0,T;L^2(\Omega))$.
		In order to pass to the limit $\tau\downarrow 0$ it remains to show
		\begin{align}
			\limsup_{\tau\downarrow 0}\big\langle \xi_\beta(\partial_t\chi_{\tau,\beta}),\partial_t\chi_{\tau,\beta}\big\rangle_{L^2(L^2)}
			\leq\big\langle \eta_\beta,\partial_t\chi_\beta\big\rangle_{L^2(L^2)}.
		\label{eqn:limsupEst}
		\end{align}
		
		Testing equation \eqref{eqn:disc2} with $\partial_t\chi_{\tau,\beta}$ and integrating over $\Omega\times(0,T)$ in space and time
		and passing $\tau\downarrow 0$ by using weak lower-semicontinuity properties for the $\iint|\partial_t\chi_{\tau,\beta}|^2$-
		and $\iint|\nabla\partial_t\chi_{\tau,\beta}|^2$-term, we obtain
		\begin{align}
			&\liminf_{\tau\downarrow 0}\int_0^T\int_{\Omega}-\xi_\beta(\partial_t\chi_{\tau,\beta})\partial_t\chi_{\tau,\beta}\dxs\notag\\
			&\quad\geq\liminf_{\tau\downarrow 0}\int_0^T\int_{\Omega}|\partial_t\chi_{\tau,\beta}|^2\dxs
				+\liminf_{\tau\downarrow 0}\int_0^T\int_{\Omega}|\nabla\partial_t\chi_{\tau,\beta}|^2\dxs\notag\\
				&\qquad+\lim_{\tau\downarrow 0}\int_0^T\int_{\Omega}\nabla\ol\chi_{\tau,\beta}\cdot\nabla \partial_t\chi_{\tau,\beta}\dxs
				\notag\\
				&\quad\quad+\lim_{\tau\downarrow 0}
				\int_0^T\int_{\Omega}\left(\frac12\big(\mathsf{c}_1'(\ol\chi_{\tau,\beta})+\mathsf{c}_2'(\ul\chi_{\tau,\beta})\big)\CC\e(\ul u_{\tau,\beta}):\e(\ul u_{\tau,\beta})
				+f'(\ol\chi_{\tau,\beta})\right)\partial_t\chi_{\tau,\beta}\dxs\notag\\
			&\quad\geq\int_0^T\int_{\Omega}\Big(|\partial_t\chi_{\beta}|^2+|\nabla\partial_t\chi_{\beta}|^2\Big)\dxs
				+\int_0^T\int_{\Omega}\nabla\chi_{\beta}\cdot\nabla \partial_t\chi_{\beta}\dxs\notag\\
			&\qquad\quad
					+\int_0^T\int_{\Omega}\Big(\frac12\mathsf{c}'(\chi_{\beta})\CC\e(u_{\beta}):\e(u_{\beta})
					+f'(\chi_{\beta})\Big)\partial_t\chi_{\beta}\dxs\notag\\
			&\quad=\int_0^T\int_{\Omega}\left(\partial_t\chi_\beta
				-\Delta\chi_\beta-\Delta\partial_t\chi_\beta
				+\frac12\mathsf{c}'(\chi_{\beta})\CC\e(u_{\beta}):\e(u_{\beta})
				+f'(\chi_{\beta})\right)\partial_t\chi_{\beta}\dxs.
		\label{eqn:liminfEst2}
		\end{align}
		Note that we also get
		$$
			-\eta_\beta=\partial_t\chi_\beta
				-\Delta\chi_\beta-\Delta\partial_t\chi_\beta
				+\frac12\mathsf{c}'(\chi_{\beta})\CC\e(u_{\beta}):\e(u_{\beta})
				+f'(\chi_{\beta})\quad\text{a.e. in $\Omega\times(0,T)$}
		$$
		by performing a limit passage $\tau\downarrow 0$ in \eqref{eqn:disc2} after testing
		with a function, integrating and using the already
		known convergence properties \eqref{eqn:uConv1}-\eqref{eqn:chiConv4} and \eqref{eqn:etaWeakConv}.
		In combination with \eqref{eqn:liminfEst2} and multiplying by $-1$, we find the
		desired limsup-estimate \eqref{eqn:limsupEst}.
		Thus
		\begin{align}
			\big\langle\eta_\beta-\xi_\beta(v),\partial_t\chi_\beta-v\big\rangle_{L^2(L^2)}\geq 0
		\label{eqn:monotonicityBeta}
		\end{align}
		for all functions $v\in L^2(0,T;L^2(\Omega))$.
		By maximal monotonicity of $\xi_\beta=\partial I_\beta$ viewed as an operator $L^2(0,T;L^2(\Omega))\to L^2(0,T;L^2(\Omega))'$, we find
		$\eta_\beta=\xi_\beta(\partial_t\chi_\beta)$.

		\item[]\hspace*{-1em}To (ii):
		Since the a priori estimates in Lemma \ref{lemma:aPrioriDiscr} are also independent of $\beta$,
		we obtain an analogous result for the limit case $\beta\downarrow 0$.
	
		It remains to establish \eqref{eqn:xiConv2}.
		Convexity of $I_\beta$ viewed as a functional $L^2(0,T;L^2(\Omega))\to\R$ implies
		\begin{align}
			I_\beta(\partial_t\chi_\beta)+\big\langle\xi_\beta(\partial_t\chi_\beta),v-\partial_t\chi_\beta\big\rangle_{L^2(L^2)}\leq
			I_\beta(v)
		\label{eqn:convexityBeta}
		\end{align}
		for all functions $v\in L^2(0,T;L^2(\Omega))$.
		With the same arguments as in (i) we obtain the limsup-estimate
		\begin{align*}
			\limsup_{\beta\downarrow 0}\big\langle \xi_\beta(\partial_t\chi_\beta),\partial_t\chi_\beta\big\rangle_{L^2(L^2)}
			\leq\big\langle \eta,\partial_t\chi\big\rangle_{L^2(L^2)},
		\end{align*}
		where $\eta$ is a cluster point of $\{\partial_t\chi_\beta\}_{\beta\in(0,1)}$ in the weak topology of
		$L^2(0,T;L^2(\Omega))$.
		
		Now let $v\in L^2(0,T;L^2(\Omega))$ with $v\leq 0$ a.e. in $\Omega\times(0,T)$
		be arbitrary.
		By using $I_\beta(v)=0$ and $I_\beta(\partial_t\chi_\beta)\geq 0$,
		we obtain from \eqref{eqn:convexityBeta} after passing to $\beta\downarrow 0$
		for a subsequence
		\begin{align}
			\big\langle\eta,v-\partial_t\chi\big\rangle_{L^2(L^2)}\leq 0.
		\label{eqn:etaConv1}
		\end{align}
		We also infer from \eqref{eqn:convexityBeta} the following boundedness with respect to $\beta$:
		$$
			I_\beta(\partial_t\chi_\beta)\leq C.
		$$
		For the limit $\beta\downarrow 0$ we thus obtain
		\begin{align}
			\partial_t\chi\leq 0\text{ a.e. in }\Omega\times(0,T).
		\label{eqn:etaConv2}
		\end{align}
		Estimate \eqref{eqn:etaConv1} together with \eqref{eqn:etaConv2}
		yields $\eta\in\partial I_{L^2(0,T;L_-^2(\Omega))}(\partial_t\chi)$
		with the indicator function
		$I_{L^2(0,T;L_-^2(\Omega))}:L^2(0,T;L^2(\Omega))\to\R\cup\{+\infty\}$
		and $L_-^2(\Omega):=\{v\in L^2(\Omega)\,|\,v\leq 0\text{ a.e.}\}$.
		
		We end up with $\eta\in \partial I_{(-\infty,0]}(\partial_t\chi)$ a.e. in $\Omega\times(0,T)$.
		\ep
		\end{itemize}
	\end{proof}

	\begin{theorem}
	\label{theorem:existenceCont}
		Let the Assumptions (A1)-(A4) be satisfied and the data $(u^0,v^0,\chi^0,b,\ell)$ from \eqref{eqn:initialData} and \eqref{eqn:externalForces}
		be given.
		The following statements are true:
		\begin{enumerate}
			\item[(i)] Regularized case ($\beta>0$):
				There exists a strong global-in-time solution $(u_\beta,\chi_\beta)$ in the sense of Definition \ref{def:notionSolution} (ii)
				which satisfies \eqref{eqn:cont5a}.
			\item[(ii)] Limit case ($\beta=0$):
				There exists a strong global-in-time solution $(u,\chi)$ in the sense of Definition \ref{def:notionSolution} (i).
				which satisfies \eqref{eqn:cont5a}.
		\end{enumerate}
	\end{theorem}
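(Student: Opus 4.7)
The plan is that almost all the work has been done in the three preceding lemmas, and it remains to assemble them and pass to the limit in the interpolated discrete system.

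\textbf{Part (i).} For each $\tau>0$ and $\beta>0$, I first fix data approximations $\{b_\tau^k\}$, $\{\ell_\tau^k\}$ from Lemma \ref{lemma:dataConv} and invoke Lemma \ref{lemma:existenceDiscr} recursively to obtain discrete solutions $\{u_{\tau,\beta}^k,\chi_{\tau,\beta}^k\}_{k=0,\ldots,M}$. These satisfy the uniform-in-$\tau,\beta$ bounds of Lemma \ref{lemma:aPrioriDiscr} (i)--(ii), so Lemma \ref{lemma:conv} (i) yields, along a subsequence $\tau\downarrow 0$, a candidate pair $(u_\beta,\chi_\beta)\in\C U\times\C X$ together with all the indicated weak and strong convergences. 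Writing the discrete system \eqref{eqn:disc1}--\eqref{eqn:disc5} in terms of the interpolants as
\begin{align*}
&\partial_t v_{\tau,\beta}-\di\bigl(\mathsf{c}(\ol\chi_{\tau,\beta})\CC\e(\ol u_{\tau,\beta})+\mathsf{d}(\ol\chi_{\tau,\beta})\DD\e(\ol v_{\tau,\beta})\bigr)=\ol\ell_\tau\qquad\text{a.e.\ in }Q,\\
&\partial_t\chi_{\tau,\beta}-\Delta\partial_t\chi_{\tau,\beta}-\Delta\ol\chi_{\tau,\beta}+\xi_\beta(\partial_t\chi_{\tau,\beta})+\tfrac12\bigl(\mathsf{c}_1'(\ol\chi_{\tau,\beta})+\mathsf{c}_2'(\ul\chi_{\tau,\beta})\bigr)\CC\e(\ul u_{\tau,\beta}){:}\e(\ul u_{\tau,\beta})+f'(\ol\chi_{\tau,\beta})=0,
\end{align*}
with the natural boundary identities from \eqref{eqn:disc4}--\eqref{eqn:disc5a}, I then pass $\tau\downarrow 0$. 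The uniform convergence $\ol\chi_{\tau,\beta},\ul\chi_{\tau,\beta}\to\chi_\beta$ on $\ol Q$ combined with (A2)--(A4) turns the nonlinear coefficients $\mathsf{c}(\ol\chi_{\tau,\beta})$, $\mathsf{d}(\ol\chi_{\tau,\beta})$, $\mathsf{c}_1'(\ol\chi_{\tau,\beta})+\mathsf{c}_2'(\ul\chi_{\tau,\beta})$ and $f'(\ol\chi_{\tau,\beta})$ into uniform convergences of their limits $\mathsf{c}(\chi_\beta)$, $\mathsf{d}(\chi_\beta)$, $\mathsf{c}'(\chi_\beta)$, $f'(\chi_\beta)$. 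These are then paired with the weak convergence of $\e(\ol u_{\tau,\beta})$ in $L^\infty(0,T;L^2)$ and of $\e(\ol v_{\tau,\beta})\weaklim \e(\partial_t u_\beta)$ in $L^2(Q)$, and, crucially, with the strong convergence $\e(\ul u_{\tau,\beta})\to\e(u_\beta)$ in $L^\infty(0,T;L^2)$ from \eqref{eqn:uConv3} in order to handle the quadratic strain term in the damage equation. The weak convergence of $\xi_\beta(\partial_t\chi_{\tau,\beta})$ to $\xi_\beta(\partial_t\chi_\beta)$ is the content of \eqref{eqn:xiConv1}. The boundary equations pass to the limit via the continuity of the trace operator on $H^2(\Omega;\R^n)\cap H^1(0,T;\cdot)$. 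The initial conditions $u_\beta(0)=u^0$, $\partial_t u_\beta(0)=v^0$, $\chi_\beta(0)=\chi^0$ follow from the embedding $H^1(0,T;X)\hookrightarrow C([0,T];X)$ applied to $u_\beta$, $\chi_\beta$ and the piecewise linear interpolant $v_{\tau,\beta}$ (which equals $v^0_\tau$ at $t=0$), combined with Lemma \ref{lemma:dataConv}. Since $\nabla\chi^0\cdot\nu=0$ on $\Gamma$ by \eqref{eqn:initialData}, the boundary identity \eqref{eqn:cont5a} is inherited from \eqref{eqn:disc5a}.

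\textbf{Part (ii).} Starting from the family $\{(u_\beta,\chi_\beta)\}_{\beta\in(0,1)}$ produced in (i), the uniform-in-$\beta$ a priori bounds from Lemma \ref{lemma:aPrioriDiscr} are preserved in the limit $\tau\downarrow 0$ by weak lower semicontinuity. Hence Lemma \ref{lemma:conv} (ii) extracts a further subsequence $\beta\downarrow 0$ with limit $(u,\chi)\in\C U\times\C X$. Passing $\beta\downarrow 0$ in \eqref{eqn:elasticRegEq}--\eqref{eqn:boundaryRegEq2} is essentially identical to the passage $\tau\downarrow 0$ of part (i), with the one substantive new ingredient being the treatment of $\xi_\beta(\partial_t\chi_\beta)$. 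Its weak limit $\xi\in L^2(Q)$ satisfies $\xi\in\partial I_{(-\infty,0]}(\partial_t\chi)$ a.e.\ in $Q$ by \eqref{eqn:xiConv2}, which is precisely the inclusion \eqref{eqn:damageEq2} required by Definition \ref{def:notionSolution} (i).

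\textbf{Main obstacle.} The delicate point is really absorbed into the preceding lemmas: to pass to the limit in the quadratic $\CC\e(\ul u){:}\e(\ul u)$-term in the damage equation one needs \emph{strong} $L^2$-convergence of $\e(\ul u)$, and this is available only because Lemma \ref{lemma:aPrioriDiscr} controls $v_{\tau,\beta}$ in $L^2(0,T;H^2)\cap H^1(0,T;L^2)$, so that an Aubin--Lions argument upgrades the weak $L^\infty(0,T;H^2)$-convergence of $\ol u,\ul u$ to strong convergence in $L^\infty(0,T;H^1)$ via \eqref{eqn:uConv3}. Without the Second a priori estimate, which in turn rests on the two-dimensional Ladyzhenskaya inequality \eqref{eqn:GN} and the regularizing term $-\Delta\chi_t$, this compactness would not be available. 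Once this is in hand, the identification of the weak limit of $\xi_\beta(\partial_t\chi_{\tau,\beta})$ via Minty's trick, already carried out inside Lemma \ref{lemma:conv}, closes both passages to the limit.
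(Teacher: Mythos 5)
Your proposal is correct and follows essentially the same route as the paper: the paper's own proof of Theorem \ref{theorem:existenceCont} is exactly this assembly of Lemma \ref{lemma:existenceDiscr}, Lemma \ref{lemma:dataConv}, Lemma \ref{lemma:aPrioriDiscr} and Lemma \ref{lemma:conv}, passing $\tau\downarrow 0$ for fixed $\beta>0$ in the tested and integrated discrete system and then $\beta\downarrow 0$. The extra detail you supply (uniform convergence of the coefficients, strong convergence of $\e(\ul u_{\tau,\beta})$ for the quadratic strain term, Minty's trick for $\xi_\beta$) is precisely what the paper delegates to Lemma \ref{lemma:conv} and its proof, so there is no gap.
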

	\begin{proof}
		\begin{enumerate}
			\item[]\hspace*{-1em}To (i):
				By multiplying the systems \eqref{eqn:disc1}, \eqref{eqn:disc2}, \eqref{eqn:disc4} and \eqref{eqn:disc5}
				with test-functions, integrating over space and time, we may pass to the limit $\tau\downarrow 0$
				for fixed $\beta>0$ by utilizing Lemma \ref{lemma:dataConv} and Lemma \ref{lemma:conv} (i)
				and standard convergence arguments.
				Then, switching back to an $(x,t)$-a.e. formulation, we obtain a strong solution of system
				\eqref{eqn:pdeSystem}-\eqref{eqn:IBC}.
				
				
			\item[]\hspace*{-1em}To (ii):
				The transition $\beta\downarrow 0$ can be conducted as in (i) by utilizing Lemma \ref{lemma:conv} (ii).
				\ep
		\end{enumerate}
	\end{proof}
	\textbf{Remarks to the proof of Theorem \ref{theorem:existenceCont}}
	\begin{enumerate}
		\item[(i)]
			The regularizing term $-\Delta\chi_t$ in \eqref{eqn:disc2} is needed in order
			to obtain an $H^1(H^1)$-bound for $\chi$ in the first estimate.
			This, in turn, was particularly necessary to estimate $\delta$ in \eqref{eqn:deltaEst}
			and to estimate the term $T_4$ in the second estimate.
		\item[(ii)]
			In the mathematical literature the elasticity equations \eqref{eqn:disc1} is sometimes
			tested with the function $-\DIV(\DD\e(u_t))$ to gain higher-order estimates for $u$
			(see \cite{WIAS1927,RR12,RR14}).
			However, due to the nonhomogeneous Neumann boundary condition \eqref{eqn:boundaryEq2}
			in our case, it is more convenient to test with
			$-\DIV\big(\mathsf{c}(\chi)\CC\e(u)+\mathsf{d}(\chi)\DD\e(u_t)\big)$ since, otherwise,
			integration by parts in space of the term $-\iint u_{tt}\cdot\DIV(\DD\e(u_t))$
			yields unpleasant terms even after using the boundary condition \eqref{eqn:boundaryEq2}
			(cf. estimates for $T_1$ in the second estimate).
	\end{enumerate}

\subsection{Continuous dependence on the data}
\label{sec:contDep}
	We are going to show continuous dependence on the data of the strong solutions of the PDE system given in Definition \ref{def:notionSolution} (i) and (ii).

	\begin{theorem}[Continuous dependence]
	\label{theorem:contDepCont}
		Let the Assumptions (A1)-(A4) be satisfied. Moreover, assume that $\mathsf{d}\equiv1$ in (A3) and one of the following condition:
		\begin{itemize}
			\item
				Let $(u_1,\chi_1)$ and $(u_2,\chi_2)$ be both strong solutions according to Definition \ref{def:notionSolution} (ii) (for $\beta>0$)
				with data $(u_1^0,v_1^0,\chi_1^0,b_1,\ell_1)$ and $(u_2^0,v_2^0,\chi_2^0,b_2,\ell_2)$.
			\item
				Let $(u_1,\chi_1)$ and $(u_2,\chi_2)$ be both strong solutions according to Definition \ref{def:notionSolution} (i) (for $\beta=0$)
				with data $(u_1^0,v_1^0,\chi_1^0,b_1,\ell_1)$ and $(u_2^0,v_2^0,\chi_2^0,b_2,\ell_2)$.
		\end{itemize}
		Then,
		\begin{align}
			&\|u_1-u_2\|_{W^{1,\infty}(0,T;L^2)\cap H^1(0,T;H^1)}
				+\|\chi_1-\chi_2\|_{H^1(0,T;H^1)}\notag\\
			&\leq C\Big(\|u_1^0-u_2^0\|_{H^1}+\|v_1^0-v_2^0\|_{L^2}
				+\|\chi_1^0-\chi_2^0\|_{H^1}+\|\ell_1-\ell_2\|_{L^2(0,T;L^2)}
				+\|b_1-b_2\|_{L^2(0,T;L^{2}(\Gamma;\Rn))}\Big),
		\label{eqn:lipEst}
		\end{align}		
		where the constant $C>0$ continuously depends on
		\begin{align*}
			C=C\Big(&\|u_1\|_{\C U},
				\|u_2\|_{\C U},
				\|\chi_{1}\|_{\C X},
				\|\chi_{2}\|_{\C X}
			\Big).
		\end{align*}
	\end{theorem}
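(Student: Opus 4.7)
The plan is to take the difference of the two systems, test the elasticity equation by $\partial_t(u_1-u_2)$ and the damage equation by $\partial_t(\chi_1-\chi_2)$, exploit monotonicity of the (possibly regularized) subgradient, and close via Gronwall's lemma with weights in $L^1(0,T)$ controlled by the a priori bounds in $\C U\times\C X$. Writing $u:=u_1-u_2$, $\chi:=\chi_1-\chi_2$, $\xi:=\xi_1-\xi_2$, $\ell:=\ell_1-\ell_2$, $b:=b_1-b_2$, the assumption $\mathsf{d}\equiv 1$ reduces the difference of the elasticity equations to
\begin{align*}
u_{tt}-\di\bigl(\mathsf{c}(\chi_1)\CC\e(u)+(\mathsf{c}(\chi_1)-\mathsf{c}(\chi_2))\CC\e(u_2)+\mu\CC\e(u_t)\bigr)=\ell\quad\text{in }Q,
\end{align*}
with Neumann datum $b$ on $\Sigma$, whereas the damage difference becomes
\begin{align*}
\chi_t-\Delta\chi_t-\Delta\chi+\xi+\tfrac12\bigl(\mathsf{c}'(\chi_1)\CC\e(u_1){:}\e(u_1)-\mathsf{c}'(\chi_2)\CC\e(u_2){:}\e(u_2)\bigr)+f'(\chi_1)-f'(\chi_2)=0
\end{align*}
in $Q$, together with homogeneous Neumann data for $\chi+\chi_t$.

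Testing the mechanical equation by $u_t$ and using the boundary condition, the elastic leading term is rewritten as $\int_\Omega\mathsf{c}(\chi_1)\CC\e(u){:}\e(u_t)\dx=\tfrac12\tfrac{d}{dt}\int_\Omega\mathsf{c}(\chi_1)\CC\e(u){:}\e(u)\dx-\tfrac12\int_\Omega\mathsf{c}'(\chi_1)\chi_{1,t}\CC\e(u){:}\e(u)\dx$. The $\chi_{1,t}$-remainder is bounded via H\"older's and Ladyzhenskaya's inequality \eqref{eqn:GN} by $C\|\chi_{1,t}\|_{H^1}\|\e(u)\|_{L^2}^{3/2}\|\e(u)\|_{H^1}^{1/2}$, which splits into $\delta\|\e(u)\|_{H^1}^2+C\|\chi_{1,t}\|_{H^1}^2\|\e(u)\|_{L^2}^2$ with weight $\|\chi_{1,t}\|_{H^1}^2\in L^1(0,T)$ thanks to the a priori regularity $\chi_1\in\C X$. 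The cross term $(\mathsf{c}(\chi_1)-\mathsf{c}(\chi_2))\CC\e(u_2){:}\e(u_t)$ is handled analogously via Lipschitz continuity of $\mathsf{c}$ and \eqref{eqn:GN}, yielding $\delta\|\e(u_t)\|_{L^2}^2+C\|\chi\|_{H^1}^2\|\e(u_2)\|_{L^2}\|\e(u_2)\|_{H^1}$; the boundary integral $\int_\Gamma b\cdot u_t\dx$ is absorbed via the trace theorem into $\delta\|u_t\|_{H^1}^2+C\|b\|_{L^2(\Gamma;\Rn)}^2$. Korn's inequality then makes the viscous term $\mu\int_\Omega\CC\e(u_t){:}\e(u_t)\dx$ coercive on $H^1(\Omega;\Rn)$.

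Testing the damage difference by $\chi_t$, the subgradient contribution $\int_\Omega(\xi_1-\xi_2)(\chi_{1,t}-\chi_{2,t})\dx\ge 0$ is nonnegative by monotonicity, both for $\xi_\beta=I_\beta'$ (via Definition~\ref{def:regularization}(iv)) and for $\partial I_{(-\infty,0]}$, and therefore drops out. The Laplacian terms yield $\|\chi_t\|_{L^2}^2+\|\nabla\chi_t\|_{L^2}^2+\tfrac12\tfrac{d}{dt}\|\nabla\chi\|_{L^2}^2$ after using \eqref{eqn:cont5a}. The quadratic elastic coupling decomposes as $\tfrac12\int_\Omega(\mathsf{c}'(\chi_1)-\mathsf{c}'(\chi_2))\CC\e(u_1){:}\e(u_1)\chi_t\dx+\tfrac12\int_\Omega\mathsf{c}'(\chi_2)\CC\e(u){:}\e(u_1+u_2)\chi_t\dx$; using Lipschitz continuity of $\mathsf{c}'$, \eqref{eqn:GN}, and the a priori bounds $\e(u_i)\in L^\infty(L^2)\cap L^2(H^1)$, both pieces are majorized by $\delta\|\chi_t\|_{H^1}^2+\delta\|\e(u)\|_{H^1}^2$ plus Gronwall-friendly remainders of the form $C\gamma(t)(\|\chi\|_{H^1}^2+\|\e(u)\|_{L^2}^2)$ with $\gamma\in L^1(0,T)$. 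The $f'$-difference is controlled by $C\|\chi\|_{L^2}\|\chi_t\|_{L^2}$ thanks to (A4).

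Summing the two estimates over $[0,t]$, choosing $\delta>0$ small enough to absorb the $\delta$-terms into the coercive left-hand side, and invoking Gronwall's lemma yields \eqref{eqn:lipEst}. The main technical obstacle is the cubic coupling terms such as $\int\mathsf{c}'(\chi_1)\chi_{1,t}\CC\e(u){:}\e(u)$ and $\int\mathsf{c}'(\chi_2)\CC\e(u){:}\e(u_i)\chi_t$, which involve quantities of marginal regularity and can only be controlled by combining the two-dimensional Ladyzhenskaya interpolation \eqref{eqn:GN} with the higher-order damage viscosity $-\Delta\chi_t$ that delivers $\chi_t\in L^2(H^1)$. The hypothesis $\mathsf{d}\equiv 1$ is equally indispensable, since without it the elasticity difference would contain an additional term $(\mathsf{d}(\chi_1)-\mathsf{d}(\chi_2))\DD\e(u_{2,t}){:}\e(u_t)$ in which $\e(u_{2,t})$ sits merely in $L^2(L^2)$ and cannot be absorbed by any first-order energy estimate.
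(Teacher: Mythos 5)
Your overall strategy coincides with the paper's proof: take differences, test the elasticity difference with $\partial_t(u_1-u_2)$ and the damage difference with $\partial_t(\chi_1-\chi_2)$, discard the subgradient difference by monotonicity, estimate the cubic couplings via Lipschitz continuity of $\mathsf{c},\mathsf{c}',f'$, two-dimensional interpolation and the a priori bounds encoded in the $\C U\times\C X$-norms, then Korn and Gronwall. However, one step fails as written. Both in your elasticity "commutator" $\int_\Omega\mathsf{c}'(\chi_1)\chi_{1,t}\CC\e(u){:}\e(u)\dx$ (a term you create by integrating the elastic energy by parts in time, which the paper does not do) and in your estimate of the damage coupling, you produce $\delta\|\e(u)\|_{H^1(\Omega)}^2$ for the difference $u=u_1-u_2$ and claim it is absorbed into the coercive left-hand side. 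It cannot be: $\|\e(u)\|_{H^1(\Omega)}$ involves second spatial derivatives of $u_1-u_2$, whereas your left-hand side only controls $\|\e(\partial_t u)\|_{L^2(0,t;L^2)}$, $\|\partial_t u(t)\|_{L^2}$, $\|\chi_1-\chi_2\|_{H^1(0,t;H^1)}$ and $\|\nabla(\chi_1-\chi_2)(t)\|_{L^2}$; no $H^2$-type quantity of the difference appears in \eqref{eqn:lipEst}, and none is available that is proportional to the data difference. Bounding $\delta\int_0^t\|\e(u)\|_{H^1}^2\ds$ by the a priori constants instead leaves an additive term independent of the data, so "choosing $\delta$ small" yields at best a H\"older-type bound after optimizing in $\delta$, not the Lipschitz estimate \eqref{eqn:lipEst}. (A minor related slip: Young applied to $\|\chi_{1,t}\|_{H^1}\|\e(u)\|_{L^2}^{3/2}\|\e(u)\|_{H^1}^{1/2}$ gives the weight $\|\chi_{1,t}\|_{H^1}^{4/3}$, not $\|\chi_{1,t}\|_{H^1}^{2}$; both are in $L^1(0,T)$, so this alone would be harmless.)

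The repair is simple and brings you back exactly to the paper's argument. Do not integrate the elastic term by parts in time: since $\mathsf{c}$ is bounded, $\int_\Omega\mathsf{c}(\chi_1)\CC\e(u){:}\e(\partial_t u)\dx\le\delta\|\e(\partial_t u)\|_{L^2}^2+C_\delta\|\e(u)\|_{L^2}^2$ directly, the viscous term (here $\mathsf d\equiv1$) providing the coercivity, and $\|\e(u)\|_{L^2(0,t;L^2)}^2$ is Gronwall-compatible via the fundamental theorem of calculus. In the damage coupling place the $L^4$-norms on $\e(u_1+u_2)$ and $\e(u_2)$ (bounded in $L^\infty(0,T;L^4)$ since $u_i\in\C U\hookrightarrow L^\infty(0,T;H^2)$) and on $\chi_t$ or $\chi$, keeping only $\|\e(u)\|_{L^2}$ respectively $\|\chi\|_{H^1}$ of the difference; this yields $\delta\|\partial_t(\chi_1-\chi_2)\|_{H^1}^2$ plus quadratic remainders with constant or $L^1$-in-time weights, and $\|\e(u)\|_{H^1}$ of the difference never enters. (If you insist on the commutator route, bound $\chi_{1,t}$ in $L^\infty(\Omega)$ via $H^2(\Omega)\hookrightarrow L^\infty(\Omega)$ in 2D, giving $C\|\chi_{1,t}\|_{H^2}\|\e(u)\|_{L^2}^2$ with weight in $L^1(0,T)$.) A final remark: your justification for the necessity of $\mathsf d\equiv1$ is not accurate, since $u_2\in\C U$ gives $\partial_t u_2\in L^2(0,T;H^2)$, so $\e(\partial_t u_2)\in L^2(0,T;L^4)$ and the extra term you mention could in fact be handled by the same interpolation; the hypothesis is simply what the theorem assumes, so nothing breaks, but the stated reason is not the right one.
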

	\begin{proof}
		For notational convenience, define
		\begin{align*}
			&\bu:=u_1-u_2,
			&&\bchi:=\chi_1-\chi_2,
			&&\buu:=u_1^0-u_2^0,\\
			&\bvv:=v_1^0-v_2^0,
			&&\bchii:=\chi_1^0-\chi_2^0,
			&&\bb:=b_1-b_2,\\
			&\bell:=\ell_1-\ell_2.
		\end{align*}
		Let $t\in[0,T]$ be arbitrary.
		Firstly, testing the damage equation \eqref{eqn:damageEq} for each solution
		with $\bchi_t$, subtracting the resulting equations and integrating over $\Omega\times(0,t)$, we obtain
		\begin{align}
			&\int_{0}^{t}\int_{\Omega}\Big(|\bchi_t|^2+\nabla\bchi\cdot\nabla\bchi_t+|\nabla\bchi_t|^2+\frac12 \mathsf{c}'(\chi_1)\big(\CC\e(u_1):\e(u_1)-\CC\e(u_2):\e(u_2)\big)\bchi_t
			\Big)\dxs\notag\\
			&+\int_{0}^{t}\int_{\Omega}\Big(\frac12\big(\mathsf{c}'(\chi_1)-\mathsf{c}'(\chi_2)\big)\CC\e(u_2):\e(u_2)\bchi_t+(f'(\chi_1)-f'(\chi_2))\bchi_t+(\xi_1-\xi_2)\bchi_t\Big)\dxs\notag\\
			&\qquad=0.
		\label{eqn:chiDiff}
		\end{align}
		By assumption, we know
		\begin{align*}
			&\xi_i\in\partial I_{(-\infty,0]}(\partial_t\chi_i),\;i=1,2&&\text{if }\beta=0,\\
			&\xi_i=I_\beta'(\partial_t\chi_i),\;i=1,2&&\text{if }\beta>0.
		\end{align*}
		It follows from the monotonicity of $\partial I_{(-\infty,0]}$ and $I_\beta'$ (see Definition \ref{def:regularization}), respectively,
		that
		$$
			(\xi_1-\xi_2)\bchi_t=(\xi_1-\xi_2)(\chi_1-\chi_2)_t\geq 0.
		$$
		Therefore, by \eqref{eqn:chiDiff},
		\begin{align}
			&\|\bchi_t\|_{L^2(0,t;H^1)}^2
				+\frac 12\|\nabla\bchi(t)\|_{L^2}^2
				-\frac 12\|\nabla\bchi^0\|_{L^2}^2\notag\\
			&\qquad\leq
				\underbrace{-\int_{0}^{t}\int_{\Omega}\frac 12 \mathsf{c}'(\chi_1)\CC\e(\bu):\e(u_1+u_2)\bchi_t\dxs}_{=:T_1}\notag\\
				&\qquad\quad\underbrace{-\int_{0}^{t}\int_{\Omega}\frac 12(\mathsf{c}'(\chi_1)-\mathsf{c}'(\chi_2))\CC\e(u_2):\e(u_2)\bchi_t\dxs}_{=:T_2}\notag\\
				&\qquad\quad\underbrace{-\int_{0}^{t}\int_{\Omega}(f'(\chi_1)-f'(\chi_2))\bchi_t\dxs}_{=:T_3}.
		\label{eqn:chiDiffTest}
		\end{align}
		By using H\"older's and Young's inequalities as well as standard Sobolev embeddings,
		the Lipschitz continuity of $\mathsf{c}'$ (see (A2)) and the Lipschitz continuity of $f'$ (see (A3), we find
		\begin{align*}
			T_1\leq{}&C\|\e(u_1+u_2)\|_{L^\infty(0,t;L^4)}\int_{0}^t\|\e(\bu)\|_{L^2}\|\bchi_t\|_{L^4}\ds\notag\\
				\leq{}&
					\delta\|\bchi_t\|_{L^2(0,t;H^1)}^2+C_\delta\|\e(\bu)\|_{L^2(0,t;L^2)}^2,\\
			T_2\leq{}&C\|\mathsf{c}'\|_{Lip}\|\e(u_2)\|_{L^\infty(0,t;L^4)}^2\int_{0}^t\|\bchi\|_{L^4}\|\bchi_t\|_{L^4}\ds\notag\\
				\leq{}&
					\delta\|\bchi_t\|_{L^2(0,t;H^1)}^2+C_\delta\|\bchi\|_{L^2(0,t;H^1)}^2,\\
			T_3\leq{}&\|f'\|_{Lip}\int_{0}^{t}\|\bchi\|_{L^2}\|\bchi_t\|_{L^2}\ds\\
				\leq{}&\delta\|\bchi_t\|_{L^2(0,t;L^2)}^2+C_\delta\|\bchi\|_{L^2(0,t;L^2)}^2.
		\end{align*}
		Applying the estimates for $T_1$, $T_2$ and $T_3$ to \eqref{eqn:chiDiffTest},
		we obtain
		\begin{align}
			\|\bchi_t\|_{L^2(0,t;H^1)}^2+\|\nabla\bchi(t)\|_{L^2}^2
			\leq{}& C_\delta\Big(\|\bchii\|_{H^1}^2+\|\bchi\|_{L^2(0,t;H^1)}^2+\|\bu\|_{L^2(0,t;H^{1})}^2\Big)
				+\delta\|\bchi_t\|_{L^2(0,t;H^1)}^2.
		\label{eqn:ineq1}
		\end{align}

		Secondly, we test each of the corresponding elasticity equations \eqref{eqn:elasticEq} for $u_1$ and $u_2$ with
		$\bu_t$ and obtain by subtraction and integration over $\Omega\times(0,t)$:
		\begin{align*}
			&\int_{0}^{t}\int_{\Omega}\Big(\bu_{tt}\bu_t+\mathsf{c}(\chi_1)\CC\e(\bu):\e(\bu_t)+(\mathsf{c}(\chi_1)-\mathsf{c}(\chi_2))\CC\e(u_2):\e(\bu_t)
				+\DD\e(\bu_t):\e(\bu_t)\Big)\dxs\\
			&\qquad=\int_{0}^{t}\int_{\Gamma}\bb\cdot\bu_t\dxs+\int_{0}^{t}\int_{\Omega}\bell\cdot\bu_t\dxs.
		\end{align*}
		This implies
		\begin{align}
			&\frac 12\|\bu_t(t)\|_{L^2}^2-\frac 12\|\bvv\|_{L^2}^2+\eta\|\e(\bu_t)\|_{L^2(0,t;L^2)}^2\notag\\
			&\qquad\leq\underbrace{-\int_{0}^{t}\int_{\Omega}\mathsf{c}(\chi_1)\CC\e(\bu):\e(\bu_t)\dxs}_{=:T_4}
				\underbrace{-\int_{0}^{t}\int_{\Omega}(\mathsf{c}(\chi_1)-\mathsf{c}(\chi_2))\CC\e(u_2):\e(\bu_t)\dxs}_{=:T_5}\notag\\
			&\qquad\quad
				+\underbrace{\int_{0}^{t}\int_{\Gamma}\bb\cdot\bu_t\dxs}_{=:T_6}
				+\underbrace{\int_{0}^{t}\int_{\Omega}\bell\cdot\bu_t\dxs}_{=:T_7}
		\label{eqn:uDiff}
		\end{align}
		Standard estimates yield
		\begin{align*}
			T_4&\leq \delta\|\e(\bu_t)\|_{L^2(0,t;L^2)}^2+C_\delta\|\e(\bu)\|_{L^2(0,t;L^2)}^2\\
				&\leq \delta\|\bu_t\|_{L^2(0,t;H^1)}^2+C_\delta\|\bu\|_{L^2(0,t;H^1)}^2,\\
			T_5&\leq C\|\bchi\|_{L^2(0,t;L^3)}\|\e(u_2)\|_{L^\infty(L^6)}\|\e(\bu_t)\|_{L^2(0,t;L^2)}\\
				&\leq \delta\|\bu_t\|_{L^2(0,t;H^1)}^2+C_\delta\|\bchi\|_{L^2(0,t;H^1)}^2,\\
			T_6&\leq\delta\|\bu_t\|_{L^2(0,t;L^2(\Gamma;\Rn))}^2+C_\delta\|\bb\|_{L^2(0,t;L^2(\Gamma;\Rn))}^2\\
				&\leq \delta\|\bu_t\|_{L^2(0,t;H^1)}^2+C_\delta\|\bb\|_{L^2(0,t;L^2(\Gamma;\Rn))}^2,\\
			T_7&\leq\delta\|\bu_t\|_{L^2(0,t;L^2)}^2+C_\delta\|\bell\|_{L^2(0,t;L^2)}^2.
		\end{align*}
		Applying the estimates $T_4$, $T_5$, $T_6$ and $T_7$ to \eqref{eqn:uDiff} shows
		\begin{align}
			&\|\bu_t(t)\|_{L^2}^2
			+\|\e(\bu_t)\|_{L^2(0,t;L^2)}^2\notag\\
				&\qquad\leq C_\delta\Big(
				\|\bvv\|_{L^2}^2
				+\|\bu\|_{L^2(0,t;H^1)}^2
				+\|\bchi\|_{L^2(0,t;H^1)}^2
				+\|\bell\|_{L^2(0,t;L^2)}^2\Big)\notag\\
				&\qquad\quad+C_\delta\|\bb\|_{L^2(0,t;L^2(\Gamma;\Rn))}^2+\delta\|\bu\|_{H^1(0,t;H^1)}^2.
			\label{eqn:ineq2}
		\end{align}
		Adding \eqref{eqn:ineq1} and \eqref{eqn:ineq2}, we obtain
		\begin{align}
			&\|\e(\bu_t)\|_{L^2(0,t;L^2)}^2
			+\|\bchi_t\|_{L^2(0,t;H^1)}^2
			+\|\bu_t(t)\|_{L^2}^2
			+\|\nabla\bchi(t)\|_{L^2}^2\notag\\
			&\qquad\leq C_\delta\Big(\|\bvv\|_{L^2}^2+\|\bchii\|_{H^1}^2
				+\|\bchi\|_{L^2(0,t;H^1)}^2
				+\|\bu\|_{L^2(0,t;H^1)}^2
				+\|\bell\|_{L^2(0,t;L^2)}^2
				+\|\bb\|_{L^2(0,t;L^2(\Gamma;\Rn))}^2
				\Big)\notag\\
				&\qquad\quad
				+\delta\Big(\|\bu\|_{H^1(0,t;H^1)}^2
				+\|\bchi_t\|_{L^2(0,t;H^1)}^2\Big).
		\label{eqn:chiUEst}
		\end{align}
		Now, adding $\|\bu\|_{L^2(0,t;H^1)}^2+\|\bu_t\|_{L^2(0,t;L^2)}^2$ on both sides and using
		\begin{align*}
			\|\bchi(t)\|_{L^2}^2=\|\bchii+\int_{0}^{t}\bchi_t(s)\ds\|_{L^2}^2
				\leq C\big(\|\bchii\|_{L^2}^2+\|\bchi_t\|_{L^2(0,t;L^2)}^2\big).
		\end{align*}
		and Korn's inequality
		$$
			\|w\|_{H^1}\leq C\big(\|w\|_{L^2}^2+\|\e(w)\|_{L^2}^2\big)
		$$
		holding for all $w\in H^1(\Omega;\Rn)$,
		the estimate \eqref{eqn:chiUEst} becomes
		\begin{align*}
			&\|\bu\|_{H^1(0,t;H^1)}^2
			+\|\bchi_t\|_{L^2(0,t;H^1)}^2
			+\|\bu_t(t)\|_{L^2}^2
			+\|\bchi(t)\|_{H^1}^2\notag\\
			&\qquad\leq C_\delta\Big(\|\bvv\|_{L^2}^2+\|\bchii\|_{H^1}^2+\|\bchi\|_{L^2(0,t;H^1)}^2
				+\|\bu\|_{H^1(0,t;L^2)}^2
				+\|\bu\|_{L^2(0,t;H^1)}^2\Big)\notag\\
				&\qquad\quad+C_\delta\Big(
				\|\bell\|_{L^2(0,t;L^2)}^2
				+\|\bb\|_{L^2(0,t;L^2(\Gamma;\Rn))}^2\Big)
				+\delta\Big(\|\bu\|_{H^1(0,t;H^1)}^2
				+\|\bchi_t\|_{L^2(0,t;H^1)}^2\Big).
		\end{align*}
		By choosing $\delta>0$ small and noticing
		$$
			\|\bu\|_{L^2(0,t;H^1)}^2\leq
				C\Big(\|\buu\|_{H^1}^2+\int_{0}^t\|\bu_t\|_{L^2(0,s;H^1)}^2\ds\Big),
		$$
		we get
		\begin{align*}
			&\|\bu\|_{H^1(0,t;H^1)}^2
			+\|\bchi_t\|_{L^2(0,t;H^1)}^2
			+\|\bu_t(t)\|_{L^2}^2
			+\|\bchi(t)\|_{H^1}^2\notag\\
			&\qquad\leq C\Big(\|\buu\|_{H^1}^2
				+\|\bvv\|_{L^2}^2
				+\|\bchii\|_{H^1}^2
				+\|\bell\|_{L^2(0,t;L^2)}^2
				+\|\bb\|_{L^2(0,t;L^2(\Gamma;\Rn))}^2\Big)\notag\\
				&\qquad\quad+C\int_{0}^{t}\Big(
				\|\bu_t\|_{L^2}^2
				+\|\bu_t\|_{L^2(0,s;H^1)}^2
				+\|\bchi\|_{H^1}^2
				\Big)\ds.
		\end{align*}
		The claim follows by Gronwall's lemma.
		\ep
	\end{proof}\\
	The continuous dependence result in Theorem \ref{theorem:contDepCont} as well as
	the a priori estimates in Lemma \ref{lemma:aPrioriDiscr} yield the
	following corollaries.
	For notational convenience we will make use of the spaces $\C U$ and $\C X$
	defined in Subsection \ref{sec:assumptions} (see \eqref{eqn:spaces}).

	\begin{corollary}[Uniqueness]
	\label{cor:uniqueness}
		Strong solutions in the sense of Definition \ref{def:notionSolution} (i) or (ii)
		with constant viscosity $\mathbb D$
		are unique to given initial-boundary data $(u^0,v^0,\chi^0,b,\ell)$.
	\end{corollary}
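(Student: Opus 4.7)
The plan is to deduce Corollary \ref{cor:uniqueness} directly from the continuous dependence estimate established in Theorem \ref{theorem:contDepCont}. Suppose $(u_1,\chi_1)$ and $(u_2,\chi_2)$ are two strong solutions in the sense of Definition \ref{def:notionSolution} (i) (respectively (ii)) corresponding to the same data $(u^0,v^0,\chi^0,b,\ell)$. Since both solutions lie in $\C U\times\C X$ by definition, we are in a regime where Theorem \ref{theorem:contDepCont} applies: the assumption $\mathsf{d}\equiv 1$ (i.e. constant viscosity $\mathbb D$) is satisfied by hypothesis, and one of the two alternative hypotheses on $\beta$ is satisfied depending on whether we are in case (i) or (ii).

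Applying inequality \eqref{eqn:lipEst} to this pair yields
\begin{align*}
	&\|u_1-u_2\|_{W^{1,\infty}(0,T;L^2)\cap H^1(0,T;H^1)}+\|\chi_1-\chi_2\|_{H^1(0,T;H^1)}\\
	&\qquad\leq C\Big(\|u^0-u^0\|_{H^1}+\|v^0-v^0\|_{L^2}+\|\chi^0-\chi^0\|_{H^1}+\|\ell-\ell\|_{L^2(0,T;L^2)}+\|b-b\|_{L^2(0,T;L^{2}(\Gamma;\Rn))}\Big)=0,
\end{align*}
where the constant $C$ is finite because it depends continuously on the $\C U$- and $\C X$-norms of $u_1,u_2,\chi_1,\chi_2$, which are finite by the definition of strong solution.

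Consequently $u_1=u_2$ and $\chi_1=\chi_2$ as elements of the respective Bochner spaces, and in particular pointwise almost everywhere in $Q$. The subgradient $\xi$ (in the limit case $\beta=0$) is then uniquely determined by a comparison argument in \eqref{eqn:damageEq}, since all remaining terms coincide. I do not anticipate a substantive obstacle here: the work has already been done in Theorem \ref{theorem:contDepCont}, and the only thing to verify is that the hypotheses of that theorem are indeed met, which follows at once from the setup of the corollary. \ep
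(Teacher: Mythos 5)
Your proposal is correct and matches the paper's own (implicit) argument: the corollary is obtained exactly by applying the Lipschitz estimate \eqref{eqn:lipEst} of Theorem \ref{theorem:contDepCont} to two solutions with identical data, so the right-hand side vanishes and the solutions coincide. Your additional remark that $\xi$ is then determined by comparison in \eqref{eqn:damageEq} is a harmless extra not required by the statement.
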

	
	\begin{corollary}[A~priori estimates]
	\label{cor:aPriori}
		A strong solution $(u,\chi)\in\C U\times\C X$ for the system in Definition \ref{def:notionSolution} (i) or (ii)
		with constant viscosity $\mathbb D$ and given data $(u^0,v^0,\chi^0,b,\ell)$
		satisfies the a priori estimates
		\begin{align*}
			&		\|u\|_{\C U}\leq C,
			&&	\|\chi\|_{\C X}\leq C,
			&&	\|\xi\|_{L^2(\Omega\times(0,T))}\leq C,
		\end{align*}
		where the constant $C>0$ continuously depends on
		$$
			C=C\big(\|u^0\|_{H^2},\|v^0\|_{H^1},\|\chi^0\|_{H^2}, \|b\|_{L^2(0,T;H^{1/2}(\Gamma;\Rn))\cap H^1(0,T;L^2(\Gamma;\Rn))},\|\ell\|_{L^2(0,T;L^2)}\big).
		$$
	\end{corollary}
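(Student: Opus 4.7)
The plan is to read off the desired bounds from the uniform a priori estimates already proved in Lemma \ref{lemma:aPrioriDiscr} by combining them with the convergence statements of Lemma \ref{lemma:conv} and the uniqueness result of Corollary \ref{cor:uniqueness}. Since the viscosity $\mathbb D$ is assumed constant, Corollary \ref{cor:uniqueness} ensures that the strong solution $(u,\chi)\in\C U\times\C X$ corresponding to the data $(u^0,v^0,\chi^0,b,\ell)$ is unique, and therefore must coincide with the solution obtained in the proof of Theorem \ref{theorem:existenceCont} as the limit, first along $\tau\downarrow 0$ for each $\beta>0$ and then along $\beta\downarrow 0$, of the time-discrete $\beta$-regularized solutions $(u_{\tau,\beta},\chi_{\tau,\beta})$.

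The Second a priori estimate in Lemma \ref{lemma:aPrioriDiscr} provides a constant $D>0$ which is independent of both $\tau$ and $\beta$ and which depends continuously exactly on the norms
$$
\|u^0\|_{H^2},\;\|v^0\|_{H^1},\;\|\chi^0\|_{H^2},\;\|b\|_{L^2(0,T;H^{1/2}(\Gamma;\Rn))\cap H^1(0,T;L^2(\Gamma;\Rn))},\;\|\ell\|_{L^2(0,T;L^2)}
$$
listed in the statement, and which controls uniformly the $H^1(0,T;H^2)\cap W^{1,\infty}(0,T;H^1)\cap H^2(0,T;L^2)$-norm of $u_{\tau,\beta}$, the $H^1(0,T;H^2)$-norm of $\chi_{\tau,\beta}$, and the $L^2(Q)$-norm of $\xi_\beta(\partial_t\chi_{\tau,\beta})$. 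Passing to the limit $\tau\downarrow 0$ with $\beta>0$ fixed and invoking the lower semicontinuity of the $\C U$- and $\C X$-norms under the weak and weak-$\star$ convergences \eqref{eqn:uConv1}--\eqref{eqn:xiConv1} yields the same bound $D$ for $(u_\beta,\chi_\beta)$ and for $\xi_\beta(\partial_t\chi_\beta)$. Repeating the argument for the second limit $\beta\downarrow 0$, based on the convergences \eqref{eqn:chiWeakConv}--\eqref{eqn:xiConv2} and the weak-$\star$ convergence in $W^{1,\infty}(0,T;H^1)$, transfers the bound to the limit pair $(u,\chi)$ and to $\xi$.

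The only step that requires a moment's care is the bound on $\xi$: for $\beta>0$ one already has $\xi_\beta(\partial_t\chi_\beta)\in L^2(Q)$ with $\|\xi_\beta(\partial_t\chi_\beta)\|_{L^2(Q)}\leq D$ by lower semicontinuity of the $L^2$-norm under the weak convergence \eqref{eqn:xiConv1}, and the weak limit $\xi$ obtained in \eqref{eqn:xiConv2} inherits the same bound. Putting the three pieces together gives the asserted estimates with $C:=D$, and the continuous dependence of $C$ on the listed data norms is inherited directly from the continuous dependence of $D$ stated in Lemma \ref{lemma:aPrioriDiscr}(ii). The main (and only genuine) obstacle is the implicit use of uniqueness to identify the arbitrary strong solution with the constructed limit; without the standing assumption that $\mathbb D$ is constant one could only say that \emph{some} strong solution satisfies the bounds.
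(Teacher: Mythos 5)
Your argument is correct and follows essentially the same route the paper intends: the paper justifies this corollary precisely by combining the $\tau$- and $\beta$-uniform bounds of Lemma \ref{lemma:aPrioriDiscr}(ii), weak/weak-$\star$ lower semicontinuity along the limit passages of Lemma \ref{lemma:conv}, and the continuous-dependence/uniqueness result (Theorem \ref{theorem:contDepCont}, Corollary \ref{cor:uniqueness}) to identify the given strong solution with the constructed limit, which is exactly why constant $\mathbb D$ is assumed. Your remark on the bound for $\xi$ and on the role of uniqueness matches the paper's reasoning; no gap.
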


	\section{Optimal control problem}
	\label{section:OCP}
	In this section we establish the announced optimal control problem for the damage-elasticity system \eqref{eqn:pdeSystem}-\eqref{eqn:IBC}.
	From now on, we assume for the viscosity tensor $\mathbb{D}=\DD$, i.e. $\mathsf{d}\equiv 1$, in order
	to apply the well-posedness result from the last section.
	
	Let $\C U$, $\C X$ and $\C B$ be given as in Section \ref{sec:assumptions}.
	Our aim is to approximate with $\chi$ prescribed damage profiles
	by controlling the Neumann boundary data $b\in\C B$ for
	the stress tensor $\sigma$ in \eqref{eqn:boundaryEq}.
	The cost functionals measures the deviation from the prescribed profiles
	at the final time or/and at all times during the evolution
	in an $L^\infty$-norm.
	We make the following assumptions:
	\begin{enumerate}
		\item[\textbf{(O1)}]
			We assume that $\lambda_Q$, $\lambda_\Omega$ and $\lambda_\Sigma$ are given non-negative constants which do not all vanish.
		\item[\textbf{(O2)}]
			The target damage profiles are given by
			\begin{align*}
				&\chi_Q\in L^\infty(Q),\qquad\chi_T\in L^\infty(\Omega).
			\end{align*}
		\item[\textbf{(O3)}]
			The admissible set of controls $\C B_{adm}\subseteq \C B$ is assumed to be non-empty, closed and bounded.
			Remember that $\C B:=L^2(0,T;H^{1/2}(\Gamma;\Rn))\cap H^1(0,T;L^{2}(\Gamma;\Rn))$ (see Subsection \ref{sec:assumptions}).
	\end{enumerate}
	\begin{remark}
		A typical choice for $\C B_{adm}$ would be
		$$
			\C B_{adm}=\big\{b\in\C B\;|\;b_{min}\leq b\leq b_{max}\text{ a.e. in $\Sigma$ and }\|b\|_{\C B}\leq M\big\},
		$$
		where $M\in(0,\infty)$ denotes the maximal $\C B$-cost and $b_{min}, b_{max}\in\C B$ the minimal and maximal cost functions satisfying $b_{min}\leq b_{max}$ a.e. in $\Sigma$.
	\end{remark}
	We define the following tracking type objective functional
	\begin{align}
	\label{eqn:costFunctional}
		\C J(\chi,b):={}&\frac{\lambda_Q}{2}\|\chi-\chi_Q\|_{L^\infty(Q)}
			+\frac{\lambda_\Omega}{2}\|\chi(T)-\chi_T\|_{L^\infty(\Omega)}
			+\frac{\lambda_\Sigma}{2}\|b\|_{L^2(\Sigma;\Rn)}^2,
	\end{align}
	where our overall optimization problem reads as
	\begin{align}
	\tag{$P_0$}
	\label{eqn:CP}
		\left.
		\begin{array}{l}
		 	\text{minimize }\C J(\chi,b)\text{ over }\C X\times\C B_{adm}\\
		 	\text{s.t. the PDE system in Definition \ref{def:notionSolution} (i) is satisfied for an $u\in\C U$.}
		\end{array}
		\right\}
	\end{align}
	\begin{remark}
		Let us emphasize that we may also choose $\|\cdot\|_{L^2}^2$-terms instead of the $\|\cdot\|_{L^\infty}$-terms in the cost functional \eqref{eqn:costFunctional}.
		The existence results presented in this section work for both cases.
	\end{remark}
	We recall that the system \eqref{eqn:pdeSystem}-\eqref{eqn:IBC} is an initial-boundary value problem,
	which admits by Theorem \ref{theorem:existenceCont} and Corollary \ref{cor:uniqueness} for every
	$(u^0,v^0,\chi^0,b,\ell)$ satisfying \eqref{eqn:initialData} and \eqref{eqn:externalForces}
	a unique solution $(u,\chi)\in \C U\times\C X$
	in the sense of Definition \ref{def:notionSolution} (i).
	Hence, the solution operator
	$$
		\Psi_0:\C I\to\C O,\qquad (u^0,v^0,\chi^0,b,\ell)\mapsto (u,\chi)
	$$
	with
	\begin{align*}
		&\C I:=\big\{(u^0,v^0,\chi^0,\ell,b)\;|\;\text{satisfying }\eqref{eqn:initialData}\text{ and }\eqref{eqn:externalForces}\big\},\\
		&\C O:=\C U\times\C X
	\end{align*}
	is well-defined. 
	Moreover, for fixed data $(u^0,v^0,\chi^0,\ell)$ the control-to-state operator
	$$
		S_0:\C B_{adm}\to\C O,\qquad b\mapsto(u,\chi)
	$$
	is also well-defined, and the optimal control problem \eqref{eqn:CP} is equivalent to minimizing the reduced cost functional
	$$
		j(b):=\C J (S_{0|2}(b),b)
	$$
	over $\C B_{adm}$,	where $S_{0|2}$ denotes the second component of $S_0$, i.e. $S_0=(S_{0|1},S_{0|2})$.
	
	For $\beta\in(0,1)$, let us denote by $S_\beta$ the operator mapping the control $b\in\C B_{adm}$
	into the unique solution $(u_\beta,\chi_\beta)\in\C O$ to the $\beta$-regularized problem
	in Definition \ref{def:notionSolution} (ii).
	\begin{remark}
		In view of the continuous dependence result in Theorem \ref{theorem:contDepCont} the operators
		$\Psi_0$ and $S_0$ are well-posed in a larger target space
		$\big(W^{1,\infty}(0,T;L^2(\Omega;\Rn))\cap H^1(0,T;H^1(\Omega;\Rn))\big)\times H^1(0,T;H^1(\Omega))$, which contains the space $\C O$.
		This fact is important for the sensitivity analysis of these operators. But in this section, we
		are interested only in existence of optimal controls, so this result is not needed.
		The sensitivity analysis which also establishes the optimality conditions of first-order will
		be treated in a forthcoming paper.     
	\end{remark}

	\subsection{Existence of optimal controls to \eqref{eqn:CP} via $\beta$-regularization}
	\label{section:existenceOCP}
	The following lemma is the basis for the main result in this section.
	\begin{lemma}
	\label{lemma:contProp}
		We have the following continuity properties:
		
		For a given sequence $\{b_\beta\}_{\beta\in(0,1)}\subseteq\C B$ and $\ol b\in\C B$ with
		\begin{align}
			b_\beta&\to\ol b\qquad\text{weakly in }\C B\text{ as }\beta\downarrow 0,
		\label{eqn:bBetaConv}
		\end{align}
		it holds
		\begin{subequations}
		\begin{align}
		\label{eqn:SConv1}
			&S_\beta(b_\beta)\to S_0(\ol b)\qquad\text{weakly-star in }\C O\text{ as }\beta\downarrow 0,\\
		\label{eqn:SConv2}
			&S_\eta(b_\beta)\to S_\eta(\ol b)\qquad\text{weakly-star in }\C O\text{ as }\beta\downarrow 0\text{ for every }\eta\in(0,1),\\
		\label{eqn:SConv3}
			&S_0(b_\beta)\to S_0(\ol b)\qquad\text{weakly-star in }\C O\text{ as }\beta\downarrow 0.
		\end{align}
		\end{subequations}
	\end{lemma}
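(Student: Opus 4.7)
My plan is to handle (b) and (c) uniformly via the Lipschitz continuous dependence result of Theorem \ref{theorem:contDepCont}, and to treat (a) separately by a limit passage $\beta\downarrow 0$ that parallels Lemma \ref{lemma:conv}(ii) but must also accommodate simultaneously varying boundary data.

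First I would observe that $\{b_\beta\}$ is bounded in $\C B$, so by the compact embedding $H^{1/2}(\Gamma;\Rn)\hookrightarrow L^2(\Gamma;\Rn)$ (compact since $\Gamma$ is a compact $C^2$-manifold by (A1)) combined with the uniform bound on $\partial_t b_\beta$ in $L^2(0,T;L^2(\Gamma;\Rn))$, an Aubin-Lions type argument yields
$$
b_\beta \to \bar b \qquad \text{strongly in } L^2(0,T;L^2(\Gamma;\Rn)).
$$
This strong convergence is what enables the passage to the limit in the Neumann boundary integrals.

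For (b) and (c), fix $\eta\in[0,1)$. Corollary \ref{cor:aPriori} provides a uniform $\C O$-bound on $S_\eta(b_\beta)$, so the constant in the continuous dependence estimate of Theorem \ref{theorem:contDepCont} (which depends only on the $\C U\times\C X$-norms of the two solutions) is controlled independently of $\beta$. Applied to the pair $S_\eta(b_\beta)$, $S_\eta(\bar b)$, this estimate bounds their $(W^{1,\infty}(0,T;L^2)\cap H^1(0,T;H^1))\times H^1(0,T;H^1)$-distance by a constant times $\|b_\beta-\bar b\|_{L^2(0,T;L^2(\Gamma;\Rn))}$, which vanishes by the previous step. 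Combined with uniform boundedness in $\C O$ and uniqueness of the weak-star limit, this gives $S_\eta(b_\beta)\to S_\eta(\bar b)$ weakly-star in $\C O$, which is (b) for $\eta>0$ and (c) for $\eta=0$.

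For (a), set $(u_\beta,\chi_\beta):=S_\beta(b_\beta)$. Corollary \ref{cor:aPriori} furnishes uniform bounds in $\C O$ together with $\|\xi_\beta(\partial_t\chi_\beta)\|_{L^2(Q)}\leq C$, so after extracting a subsequence the convergences collected in Lemma \ref{lemma:conv}(ii) carry over with limits $(\tilde u,\tilde\chi,\tilde\xi)$. The linear terms and the nonlinear coefficients $\mathsf{c}(\chi_\beta)$, $\mathsf{c}'(\chi_\beta)$, $f'(\chi_\beta)$ pass to the limit thanks to the strong and uniform convergences of $\chi_\beta$ furnished by Aubin-Lions, while the boundary integral passes by the strong convergence of $b_\beta$ just established. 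The main obstacle I anticipate is identifying the subgradient limit, namely showing $\tilde\xi\in\partial I_{(-\infty,0]}(\partial_t\tilde\chi)$ a.e.~in $Q$. I would resolve this by replaying the Minty/convexity argument from Lemma \ref{lemma:conv}(ii): starting from the convexity inequality
$$
I_\beta(\partial_t\chi_\beta)+\langle \xi_\beta(\partial_t\chi_\beta),\,v-\partial_t\chi_\beta\rangle_{L^2(Q)}\leq I_\beta(v),
$$
specializing to $v\in L^2(Q)$ with $v\leq 0$ so that $I_\beta(v)=0$, and combining this with a limsup estimate obtained by testing the damage equation with $\partial_t\chi_\beta$ and invoking weak lower-semicontinuity, one recovers both $\partial_t\tilde\chi\leq 0$ a.e. and the correct subdifferential inclusion. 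By Corollary \ref{cor:uniqueness}, $(\tilde u,\tilde\chi)=S_0(\bar b)$, and a standard subsequence argument promotes the convergence to the whole sequence.
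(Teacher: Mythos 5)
Your proposal is correct. For \eqref{eqn:SConv1} you follow essentially the paper's own route: uniform a priori bounds from Corollary \ref{cor:aPriori} (available because $\{b_\beta\}$ is bounded in $\C B$), extraction of a weak-star convergent subsequence, limit passage in the regularized system with the Minty/convexity argument of Lemma \ref{lemma:conv} (ii) to identify the subgradient, identification of the limit as $S_0(\ol b)$ via Corollary \ref{cor:uniqueness}, and the standard subsequence principle. Where you genuinely depart from the paper is in \eqref{eqn:SConv2} and \eqref{eqn:SConv3}: the paper settles these ``with the same reasoning'', i.e. again by compactness, limit passage in the PDE system with $\eta$ fixed, and uniqueness, whereas you first upgrade the weak $\C B$-convergence of $b_\beta$ to strong convergence in $L^2(0,T;L^2(\Gamma;\Rn))$ by an Aubin--Lions argument (compactness of $H^{1/2}(\Gamma;\Rn)\hookrightarrow L^2(\Gamma;\Rn)$ together with the $H^1(0,T;L^2(\Gamma;\Rn))$-bound) and then apply the Lipschitz estimate of Theorem \ref{theorem:contDepCont}, whose constant is controlled uniformly in $\beta$ by Corollary \ref{cor:aPriori}; this is legitimate, since $\mathsf{d}\equiv 1$ is assumed throughout Section \ref{section:OCP} and the two solutions being compared share the same regularization parameter, so the theorem applies. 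Your route buys slightly more, namely strong convergence of $S_\eta(b_\beta)$ to $S_\eta(\ol b)$ in $\big(W^{1,\infty}(0,T;L^2)\cap H^1(0,T;H^1)\big)\times H^1(0,T;H^1)$ with an explicit rate in terms of $\|b_\beta-\ol b\|_{L^2(\Sigma;\Rn)}$, and it avoids repeating the limit passage in the $\eta$-regularized system; the paper's uniform treatment is shorter and needs only weak convergence of the boundary data, which indeed also suffices in your part (a), since the boundary term is linear in $b$, so the strong $L^2(\Sigma;\Rn)$-convergence you establish is convenient there but not necessary.
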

	\begin{proof}
		Let $(u_\beta,\chi_\beta)=S_\beta(b_\beta)$. Then, $(u_\beta,\chi_\beta)$ is a solution to the
		$\beta$-regularized system in the sense of Definition \ref{def:notionSolution} (ii) with Neumann data $b_\beta$.
		Since $\{b_\beta\}\subseteq \C B$ is bounded by \eqref{eqn:bBetaConv}, we obtain the a priori estimates from Corollary \ref{cor:aPriori}.
		In particular,
		\begin{align*}
			&S_\beta(b_\beta)\to (u,\chi)\qquad\text{weakly-star in }\C O\text{ as }\beta\downarrow 0\text{ (for a subsequence)}.
		\end{align*}
		for some $(u,\chi)\in\C O$.
		
		We see from the proof of Lemma \ref{lemma:conv} that the convergence properties
		in Lemma \ref{lemma:conv} (ii) hold for a subsequence $\beta\downarrow 0$.
		By using these convergence properties as well as \eqref{eqn:bBetaConv},
		we can pass to the limit for a subsequence in the $\beta$-regularized PDE system \eqref{eqn:elasticRegEq}-\eqref{eqn:boundaryRegEq2}
		(cf. the proof of Theorem \ref{theorem:existenceCont}). We obtain that $(u,\chi)$ satisfies the limit system in Definition \ref{def:notionSolution} (i)
		to the Neumann data $\ol b$.
		In other words, $S_0(\ol b)=(u,\chi)$.
		By the uniqueness of solutions shown in Corollary \ref{cor:uniqueness}, we see that
		$S_\beta(b_\beta)$ convergences weakly-star to $(u,\chi)$ for the whole sequence $\beta\downarrow 0$.
		Hence, \eqref{eqn:SConv1} is shown and \eqref{eqn:SConv2} and \eqref{eqn:SConv3} follow with the same reasoning.
		\ep
	\end{proof}
	\begin{corollary}
	\label{cor:Jconv}
		We also have the property
		$$
			\lim_{\beta\downarrow 0}\C J(S_{\beta|2}(b),b)=\C J(S_{0|2}(b),b)
		$$
		for all $b\in\C B_{adm}$.
	\end{corollary}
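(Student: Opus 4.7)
The plan is to reduce the claim to the convergence properties already established in Lemma \ref{lemma:contProp} and Lemma \ref{lemma:conv}\,(ii). Fix $b \in \C B_{adm}$, and apply Lemma \ref{lemma:contProp} with the constant sequence $b_\beta \equiv b$: this trivially satisfies $b_\beta \to b$ weakly in $\C B$, so by \eqref{eqn:SConv1} we obtain
\begin{align*}
	(u_\beta,\chi_\beta):=S_\beta(b)\ws (u,\chi):=S_0(b)\quad\text{in }\C O=\C U\times\C X\text{ as }\beta\downarrow 0.
\end{align*}
In particular, $\chi_\beta \ws \chi$ in $H^1(0,T;H^2(\Omega))$. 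Since the last summand $\tfrac{\lambda_\Sigma}{2}\|b\|_{L^2(\Sigma;\Rn)}^2$ of $\C J$ does not depend on $\beta$, only the two $L^\infty$-tracking terms need to be addressed.

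The core observation is that the weak-star convergence $\chi_\beta \ws \chi$ in $\C X$, combined with the uniform a priori bounds in Lemma \ref{lemma:aPrioriDiscr}\,(ii) (which pass to $\chi_\beta$), yields \emph{uniform} convergence $\chi_\beta \to \chi$ on $\ol\Omega\times[0,T]$. Indeed, this is exactly the convergence obtained in the proof of Lemma \ref{lemma:conv}\,(ii) via an Aubin-Lions type compactness argument applied to the embedding $H^2(\Omega)\compact C(\ol\Omega)$ (valid since $n\leq 2$), together with the $H^1$-in-time regularity. Uniqueness from Corollary \ref{cor:uniqueness} ensures that no subsequence selection is needed: the full sequence converges uniformly to $\chi$.

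With this uniform convergence in hand, the two remaining $L^\infty$-terms converge by the reverse triangle inequality:
\begin{align*}
	\big|\|\chi_\beta-\chi_Q\|_{L^\infty(Q)}-\|\chi-\chi_Q\|_{L^\infty(Q)}\big|
	&\leq \|\chi_\beta-\chi\|_{L^\infty(Q)}\;\to\;0,\\
	\big|\|\chi_\beta(T)-\chi_T\|_{L^\infty(\Omega)}-\|\chi(T)-\chi_T\|_{L^\infty(\Omega)}\big|
	&\leq \|\chi_\beta(T)-\chi(T)\|_{L^\infty(\Omega)}\;\to\;0,
\end{align*}
the latter using that uniform convergence on $\ol\Omega\times[0,T]$ implies uniform convergence at the fixed final time slice $t=T$. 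Summing the three contributions then yields $\C J(S_{\beta|2}(b),b)\to\C J(S_{0|2}(b),b)$ as $\beta\downarrow 0$.

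The main (and only) non-routine point is justifying the uniform convergence $\chi_\beta\to\chi$ on $\ol Q$; this is, however, already contained in the convergence machinery of Lemma \ref{lemma:conv}\,(ii), and it is crucial here because weak-star convergence in $\C X$ alone would not be enough to pass to the limit inside the non-convex $L^\infty$-norms. All other steps are essentially continuity of the $L^\infty$-norm under uniform convergence and the $\beta$-independence of the boundary-control penalization term.
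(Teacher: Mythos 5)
Your proposal is correct and follows essentially the same route as the paper: apply Lemma \ref{lemma:contProp} with the constant sequence $b_\beta\equiv b$ to get $S_{\beta|2}(b)\ws S_{0|2}(b)$ in $\C X$, upgrade this to strong convergence in $C^0(\ol Q)$ by the Aubin--Lions/Simon compactness argument, and conclude via continuity of the $L^\infty$-terms. You merely spell out the final reverse-triangle-inequality step that the paper compresses into ``and the claim follows.''
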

	\begin{proof}
		Set $b_\beta:=b$ and apply Lemma \ref{lemma:contProp}. In particular, \eqref{eqn:SConv1} implies
		$$
			S_{\beta|2}(b)\to S_{0|2}(b)\quad\text{weakly-star in }\C X\text{ as }\beta\downarrow0.
		$$
		Define $\chi_\beta:=S_{\beta|2}(b)$ and $\chi:=S_{0|2}(b)$. A standard compactness
		result shows (see \cite{Simon})
		$$
			\chi_\beta\to \chi\quad\text{strongly in }C^0(\ol Q)\text{ as }\beta\downarrow0
		$$
		and the claim follows.
		\ep
	\end{proof}
	
	\begin{theorem}
	\label{theorem:optimalControl}
		Suppose that Assumptions (A1)-(A4) as well as (O1)-(O3) are satisfied.
		Then the optimal control problem \eqref{eqn:CP} admits a solution.
	\end{theorem}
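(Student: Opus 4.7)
The plan is to apply the direct method of the calculus of variations not to \eqref{eqn:CP} directly but to the $\beta$-regularized family of optimal control problems
\begin{equation*}
	(P_\beta)\qquad\text{minimize }\C J(\chi,b)\text{ over }\C X\times\C B_{adm}\text{ subject to }(u,\chi)=S_\beta(b),
\end{equation*}
and then to let $\beta\downarrow 0$, exploiting the continuity properties of the control-to-state map established in Lemma \ref{lemma:contProp} and the reduced-cost convergence from Corollary \ref{cor:Jconv}. This is the approximation scheme announced in the introduction and in the section title.

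For fixed $\beta>0$, I would first show that $(P_\beta)$ admits a minimizer $b_\beta^*$ via the standard direct method. A minimizing sequence $\{b_n\}\subseteq\C B_{adm}$ is bounded in the reflexive Hilbert space $\C B$, so (together with the natural weak closedness consistent with (O3), as is automatic in the convex example listed there) a subsequence converges weakly to some $b_\beta^*\in\C B_{adm}$. Property \eqref{eqn:SConv2} yields $S_\beta(b_n)\to S_\beta(b_\beta^*)$ weakly-star in $\C O$, and the compact embedding $\C X\hookrightarrow C^0(\ol Q)$ (via Aubin-Lions-Simon, cf. \cite{Simon}) upgrades this to uniform convergence of the damage components. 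Consequently the two $L^\infty$-terms of $\C J$ pass to the limit by continuity (via the reverse triangle inequality in the sup-norm), while $\|b\|_{L^2(\Sigma;\Rn)}^2$ is weakly lower semicontinuous; this establishes the joint weak lower semicontinuity of $\C J$ along the minimizing sequence, proving that $b_\beta^*$ is indeed a minimizer.

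The limit passage $\beta\downarrow 0$ is then carried out analogously. The family $\{b_\beta^*\}_{\beta\in(0,1)}$ is bounded in $\C B$ by (O3), so a subsequence converges weakly to some $\bar b\in\C B_{adm}$. Property \eqref{eqn:SConv1} gives $S_\beta(b_\beta^*)\to S_0(\bar b)=:(\bar u,\bar\chi)$ weakly-star in $\C O$, and the same compact embedding yields $\chi_\beta^*\to\bar\chi$ uniformly on $\ol Q$. For any admissible competitor $b\in\C B_{adm}$, the minimality of $b_\beta^*$ in $(P_\beta)$ reads
\begin{equation*}
	\C J(S_{\beta|2}(b_\beta^*),b_\beta^*)\leq\C J(S_{\beta|2}(b),b).
\end{equation*}
Taking $\liminf_{\beta\downarrow 0}$ on the left (using the lsc established in Step 1) and $\lim_{\beta\downarrow 0}$ on the right (directly by Corollary \ref{cor:Jconv}, since $b$ is fixed so the $L^2$-term is unaffected) yields $\C J(\bar\chi,\bar b)\leq\C J(S_{0|2}(b),b)$ for every $b\in\C B_{adm}$, so $\bar b$ solves \eqref{eqn:CP}.

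The step I would flag as the main obstacle is the treatment of the $L^\infty$-terms of the cost functional: weak convergence of states in $\C X$ is a priori insufficient to control either $\|\chi-\chi_Q\|_{L^\infty(Q)}$ or $\|\chi(T)-\chi_T\|_{L^\infty(\Omega)}$. The way out relies crucially on the second a priori estimate from Lemma \ref{lemma:aPrioriDiscr}, which embeds the damage component into $H^1(0,T;H^2(\Omega))$ and hence, via the compact embedding $H^1(0,T;H^2(\Omega))\hookrightarrow C^0(\ol Q)$ available in two spatial dimensions, into $C^0(\ol Q)$; this converts weak-star convergence into uniform convergence of damage profiles, which is precisely what is needed to pass the $L^\infty$-distances to the limit.
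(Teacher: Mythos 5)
Your proposal is correct and follows essentially the same route as the paper: first establish existence of minimizers for the $\beta$-regularized problems \eqref{eqn:CPbeta} by the direct method (this is exactly Lemma \ref{lemma:CPbeta}), then pass $\beta\downarrow 0$ using the boundedness of $\C B_{adm}$, the continuity properties \eqref{eqn:SConv1}--\eqref{eqn:SConv2} of Lemma \ref{lemma:contProp}, the compact embedding into $C^0(\ol Q)$ to handle the $L^\infty$-terms, and Corollary \ref{cor:Jconv} for the competitor comparison. The chain of inequalities you obtain for an arbitrary $b\in\C B_{adm}$ is the same one used in the paper's proof, so there is nothing to add.
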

	Before proving Theorem \ref{theorem:optimalControl}, we introduce a family of auxilliary optimal
	control problems (P$_\beta$), which are parametrized by $\beta\in(0,1)$.
	We define
	\begin{align}
	\tag{$P_\beta$}
	\label{eqn:CPbeta}
		\left.
		\begin{array}{l}
		 	\text{minimize }\C J(\chi,b)\text{ over }\C X\times\C B_{adm}\\
		 	\text{s.t. the $\beta$-regularized PDE system in Definition \ref{def:notionSolution} (ii) is satisfied.}
		\end{array}
		\right\}
	\end{align}
	The following result guarantees the existence of an optimal control to \eqref{eqn:CPbeta}.
	\begin{lemma}
	\label{lemma:CPbeta}
		Suppose that the Assumptions (A1)-(A4) as well as (O1)-(O3) are fulfilled.
		Let $\beta>0$ be given. Then the optimal control problem \eqref{eqn:CPbeta} admits a solution.
	\end{lemma}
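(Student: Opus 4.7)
The plan is to apply the direct method of the calculus of variations to the reduced problem $\min_{b \in \C B_{adm}} \C J(S_{\beta|2}(b), b)$ for fixed $\beta > 0$. Since $\C J \geq 0$ and $\C B_{adm}$ is non-empty, the infimum $\alpha := \inf_{b \in \C B_{adm}} \C J(S_{\beta|2}(b), b) \in [0,\infty)$ is finite, and one can select a minimizing sequence $\{b_n\}_{n \in \N} \subseteq \C B_{adm}$ with $\C J(S_{\beta|2}(b_n), b_n) \to \alpha$.

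Because $\C B_{adm}$ is bounded in the Hilbert space $\C B$, a subsequence (not relabelled) satisfies $b_n \weaklim \bar b$ weakly in $\C B$, and the closure assumption on $\C B_{adm}$ in (O3) (which is weak closure, as is the case for the convex box example in the Remark via Mazur's theorem) ensures $\bar b \in \C B_{adm}$. Next I invoke the continuity property \eqref{eqn:SConv2} of Lemma~\ref{lemma:contProp}, reading its sequential statement with the role of the varying index played by $n$ while the regularization parameter is frozen at $\beta$: this yields
$$
S_\beta(b_n) \weakstarlim S_\beta(\bar b) \quad \text{weakly-star in } \C O.
$$
In particular $\chi_n := S_{\beta|2}(b_n)\weaklim \chi := S_{\beta|2}(\bar b)$ weakly in $\C X = H^1(0,T;H^2(\Omega))$, and the same Aubin--Lions type compactness argument used in the proof of Corollary~\ref{cor:Jconv} upgrades this to $\chi_n \to \chi$ strongly in $C^0(\overline{Q})$, whence $\chi_n(T) \to \chi(T)$ uniformly on $\overline{\Omega}$.

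From the uniform convergence of $\chi_n$ to $\chi$ I obtain, via the reverse triangle inequality,
$$
\|\chi - \chi_Q\|_{L^\infty(Q)} = \lim_{n\to\infty}\|\chi_n - \chi_Q\|_{L^\infty(Q)}, \qquad \|\chi(T) - \chi_T\|_{L^\infty(\Omega)} = \lim_{n\to\infty}\|\chi_n(T) - \chi_T\|_{L^\infty(\Omega)},
$$
so the first two terms of $\C J$ are continuous along the minimizing sequence. For the third term, weak convergence $b_n \weaklim \bar b$ in $\C B$ implies weak convergence in $L^2(\Sigma;\R^n)$, so by weak lower semicontinuity of the $L^2$-norm
$$
\tfrac{\lambda_\Sigma}{2}\|\bar b\|_{L^2(\Sigma;\R^n)}^2 \leq \liminf_{n\to\infty}\tfrac{\lambda_\Sigma}{2}\|b_n\|_{L^2(\Sigma;\R^n)}^2.
$$
Adding the three contributions shows $\C J(\chi, \bar b) \leq \liminf_n \C J(\chi_n, b_n) = \alpha$, so $\bar b$ is a minimizer and $(S_{\beta|1}(\bar b), \chi, \bar b)$ solves \eqref{eqn:CPbeta}.

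The only non-routine ingredient is the sequential continuity of $S_\beta$ from weak convergence in $\C B$ to weak-star convergence in $\C O$, which is exactly the content of \eqref{eqn:SConv2} in Lemma~\ref{lemma:contProp}; so the main work has already been deposited there. The secondary delicate point is handling the $L^\infty$-terms in $\C J$, which weak convergence alone does not control, and which is why the compact embedding into $C^0(\overline{Q})$ is essential --- fortunately, the higher regularity of $\chi_n$ in $\C X$ (bounded in $H^1(0,T;H^2(\Omega))$ by Corollary~\ref{cor:aPriori}) provides precisely this compactness in two space dimensions.
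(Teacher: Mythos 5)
Your proposal is correct and follows essentially the same route as the paper: a minimizing sequence in $\C B_{adm}$, weak compactness and closedness from (O3), the continuity property of $S_\beta$ from Lemma \ref{lemma:contProp} (read with the regularization parameter frozen, exactly as the paper does), the compactness upgrade to $C^0(\ol Q)$ for the $L^\infty$-terms, and weak lower semicontinuity of the $b$-term. No substantive differences to report.
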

	\begin{proof}
		Let $\{b^n\}_{n\in\N}\subseteq \C B_{adm}$ be a minimizing sequence for \eqref{eqn:CPbeta}, and let
		$(u_\beta^n,\chi_\beta^n)=S_\beta(b^n)$, $n\in\N$.
		By the boundedness and closedness of $\C B_{adm}$ (see (O3)), we find a function $\ol b\in\C B_{adm}$
		and a subsequence of $\{b^n\}$ (we omit the subscript) such that
		$$
			b^n\to\ol b\quad\text{weakly in }\C B\text{ as }n\uparrow\infty.
		$$
		Lemma \ref{lemma:contProp} yields
		$$
			S_\beta(b^n)\to S_\beta(\ol b)\quad\text{weakly-star in }\C O\text{ as }n\uparrow\infty.
		$$
		Let $(\ol u_\beta,\ol \chi_\beta):=S_\beta(\ol b)$. We particularly find
		$$
			\ol\chi_\beta^n=S_{\beta|2}(b^n)\to S_{\beta|2}(\ol b)=\ol\chi_\beta\quad\text{weakly-star in }\C X\text{ as }n\uparrow\infty.
		$$
		A standard compact result reveals
		$$
			\ol \chi_\beta^n\to \ol \chi_\beta\quad\text{strongly in }C^0(\ol Q)\text{ as }n\uparrow\infty.
		$$
		It follows from the sequentially weak lower semicontinuity of the cost functional $\C J$ that
		$\ol b$ is an optimal control for \eqref{eqn:CPbeta}, i.e.
		$$
			\C J(\ol\chi_\beta,\ol b_\beta)\leq \liminf_{n\uparrow\infty}\C J(\chi_\beta^n,b_\beta^n).
		$$
		\ep
	\end{proof}\\
	\textbf{Proof of Theorem \ref{theorem:optimalControl}.}
	By virtue of Lemma \ref{lemma:CPbeta}, for any $\beta\in(0,1)$, we may pick an optimality pair
	$$
		(\chi_\beta,b_\beta)\in\C X\times\C B_{adm}
	$$
	for the optimal control problem \eqref{eqn:CPbeta}.
	Obviously, we have $(u_\beta,\chi_\beta)=S_\beta(b_\beta)$, $\beta\in(0,1)$.
	By the assumption (O3) and Lemma \ref{lemma:contProp}, we find functions
	$(\ol u, \ol\chi)\in\C O$ and $\ol b\in\C B_{adm}$ with $S_0(\ol b)=(\ol u,\ol \chi)$ such that
	\begin{subequations}
	\label{eqn:CPconv}
	\begin{align}
		&&&&(u_\beta, \chi_\beta)&\to (\ol u,\ol \chi)&&\text{weakly-star in }\C O,&&&&\\
		&&&&b_\beta&\to \ol b&&\text{weakly in }\C B&&&&
	\end{align}
	\end{subequations}
	as $\beta\downarrow 0$ (for a subsequence).

	It remains to show that $(\ol\chi,\ol b)$ is in fact an optimality pair of \eqref{eqn:CP}.
	To this end, let $b\in\C B_{adm}$ be arbitrary.
	In view of the convergence properties \eqref{eqn:CPconv}
	and the sequentially weak lower semicontinuity of the cost functional, we have
	\begin{align*}
		\C J(\ol\chi,\ol b)
			\leq\liminf_{\beta\downarrow 0}\C J(\chi_\beta,b_\beta)
			=\liminf_{\beta\downarrow 0}\C J(S_{\beta|2}(b_\beta),b_\beta).
	\end{align*}
	By using the optimality property of \eqref{eqn:CPbeta}, we obtain
	\begin{align*}
		\liminf_{\beta\downarrow 0}\C J(S_{\beta|2}(b_\beta),b_\beta)
		\leq{}&\liminf_{\beta\downarrow 0}\C J(S_{\beta|2}(b),b).
	\end{align*}
	Finally, the convergence property in Corollary \ref{cor:Jconv} shows
	\begin{align*}
		\liminf_{\beta\downarrow 0}\C J(S_{\beta|2}(b),b)
		=\C J(S_{0|2}(b),b).
	\end{align*}
	In conclusion, we have proven $\C J(S_{0|2}(\ol b),\ol b)\leq \C J(S_{0|2}(b),b)$.
	\ep
	\begin{remark}
		Theorem \ref{theorem:optimalControl} can also be shown in the spirit of Lemma \ref{lemma:CPbeta}. However,
		the proof presented via convergence of $\beta$-approximations might be of interest in view of the implementation of optimality systems.
	\end{remark}

	\subsection{An adapted optimal control problem to \eqref{eqn:CP}}
	\label{section:adaptedOCP}
	Theorem \ref{theorem:optimalControl} does not yield any information on whether every solution
	to the optimal control problem \eqref{eqn:CP} can be approximated by a sequence of solutions to the problem
	\eqref{eqn:CPbeta}.
	As already announced in the introduction, we are not able to prove such
	a general ``global'' result because of the lack of uniqueness of optimizers.
	Hence we cannot guarantee that the weak limit of the sequence of optimizers
	to problem \eqref{eqn:CPbeta} converges always to the same optimizer of \eqref{eqn:CP}.
	Instead, we can only give an answer for every individual optimizer of \eqref{eqn:CP},
	which is the reason why this solution is called a ``local'' result (see also Remark
	\ref{remark:adaptedCP}).
	For this purpose, we employ a trick due to \cite[Section 5 - Proof of Theorem 1]{Bar81}.
	
	To this end, let $((\ol u,\ol\chi),\ol b )\in\C O\times\C B_{adm}$, where $(\ol u,\ol\chi)=S_0(\ol b)$,
	be an arbitrary but fixed solution to \eqref{eqn:CP}. We associate with this solution the
	\textit{adapted cost functional}
	$$
		\widetilde{\C J}(\chi,b):=\C J(\chi,b)+\frac12\|b-\ol b\|_{L^2(\Sigma;\Rn)}^2
	$$
	and the corresponding \textit{adapted optimal control problem}
	\begin{align}
	\tag{$\widetilde P_\beta$}
	\label{eqn:CPtildeBeta}
		\left.
		\begin{array}{l}
		 	\text{minimize }\widetilde{\C J}(\chi,b)\text{ over }\C X\times\C B_{adm}\\
		 	\text{s.t. the $\beta$-regularized PDE system in Definition \ref{def:notionSolution} (ii) is satisfied.}
		\end{array}
		\right\}
	\end{align}
	With a proof that resembles that of Lemma \ref{lemma:CPbeta} and needs no repetition here, we can show the following
	result:
	\begin{lemma}
	\label{lemma:CPtildeBeta}
		Suppose that the Assumptions (A1)-(A4) as well as (O1)-(O3) are fulfilled.
		Let $\beta\in(0,1)$ be given. Then, the optimal control problem \eqref{eqn:CPtildeBeta} admits a solution.
	\end{lemma}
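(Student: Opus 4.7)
The plan is to follow almost verbatim the strategy of the proof of Lemma \ref{lemma:CPbeta}, since the only change between the two problems is the addition of the extra term $\frac12\|b-\ol b\|_{L^2(\Sigma;\Rn)}^2$ to the cost functional, and this term has good convexity/continuity properties. First I would note that $\widetilde{\C J}$ is bounded below by zero on $\C X\times \C B_{adm}$, so an infimum $\alpha := \inf \widetilde{\C J}$ exists and a minimizing sequence $\{(\chi^n, b^n)\}_{n\in\N}$ with $(u_\beta^n, \chi^n) = S_\beta(b^n)$ can be chosen.

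Next I would use the assumption (O3) that $\C B_{adm}\subseteq \C B$ is bounded and closed in order to extract, after passing to a subsequence, a weak limit $b^* \in \C B_{adm}$ with $b^n \weaklim b^*$ in $\C B$. The continuity property \eqref{eqn:SConv2} from Lemma \ref{lemma:contProp} then yields $S_\beta(b^n) \ws S_\beta(b^*)$ weakly-star in $\C O$; in particular, writing $(u_\beta^*, \chi^*) := S_\beta(b^*)$, the second components $\chi^n \ws \chi^*$ weakly-star in $\C X$. By the Aubin-Lions type compactness embedding $\C X \hookrightarrow C^0(\ol Q)$ already used in Corollary \ref{cor:Jconv}, one obtains the strong convergence $\chi^n \to \chi^*$ in $C^0(\ol Q)$, which in particular gives $\chi^n(T) \to \chi^*(T)$ uniformly on $\ol\Omega$.

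The final step is to verify sequential weak lower semicontinuity of $\widetilde{\C J}$ along this subsequence. The terms $\frac{\lambda_Q}{2}\|\chi^n - \chi_Q\|_{L^\infty(Q)}$ and $\frac{\lambda_\Omega}{2}\|\chi^n(T) - \chi_T\|_{L^\infty(\Omega)}$ pass to the limit by the uniform convergence of $\chi^n$; the term $\frac{\lambda_\Sigma}{2}\|b^n\|_{L^2(\Sigma;\Rn)}^2$ is sequentially weakly lower semicontinuous on $\C B$ since it is a continuous convex functional of $b$; and the adapted term $\frac12\|b^n - \ol b\|_{L^2(\Sigma;\Rn)}^2$ is likewise sequentially weakly lower semicontinuous for the same convexity reason (the $L^2$-norm on $\Sigma$ is weakly continuous under the continuous embedding $\C B \hookrightarrow L^2(\Sigma;\Rn)$). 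Combining these,
\begin{align*}
    \widetilde{\C J}(\chi^*, b^*) \le \liminf_{n\uparrow \infty} \widetilde{\C J}(\chi^n, b^n) = \alpha,
\end{align*}
so $(\chi^*, b^*)$ is a minimizer, and by construction the constraint $(u_\beta^*, \chi^*) = S_\beta(b^*)$ is satisfied.

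I do not expect any serious obstacle: the only point to watch is that the adapted penalty $\frac12\|b - \ol b\|_{L^2(\Sigma;\Rn)}^2$ remains weakly lower semicontinuous on $\C B$, which holds automatically since $\C B \hookrightarrow L^2(\Sigma;\Rn)$ continuously and the squared norm shifted by $\ol b$ is convex and continuous on $L^2(\Sigma;\Rn)$. Everything else is a direct replica of the argument carried out in Lemma \ref{lemma:CPbeta}, hence the claim that no further repetition is needed.
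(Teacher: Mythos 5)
Your proposal is correct and takes essentially the same route as the paper, which simply observes that the proof of Lemma \ref{lemma:CPbeta} carries over verbatim because the adapted term $\frac12\|b-\ol b\|_{L^2(\Sigma;\Rn)}^2$ is convex and continuous on $L^2(\Sigma;\Rn)$, hence sequentially weakly lower semicontinuous under the embedding $\C B\hookrightarrow L^2(\Sigma;\Rn)$. Only a wording slip: in your parenthetical the squared $L^2$-norm is weakly \emph{lower semicontinuous}, not ``weakly continuous'', but your main argument invokes the correct property, so nothing is affected.
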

	We are now in the position to give a partial answer to the question raised above.
	More precisely, we show the following theorem:
	\begin{theorem}
	\label{theorem:adaptedOCP}
		Let the Assumptions (A1)-(A4) and (O1)-(O3) be satisfied.
		Suppose that $(\ol\chi,\ol b)\in\C X\times \C B_{adm}$ is any fixed solution to the optimal control problem \eqref{eqn:CP}.
		Then, there exists a pair $(\ol\chi_\beta,\ol b_\beta)\in\C X\times\C B_{adm}$ solving the adapted problem \eqref{eqn:CPtildeBeta}
		such that
		$\widetilde{\C J}(\ol\chi_\beta,\ol b_\beta)\to \C J(\ol\chi,\ol b)$ as $\beta\downarrow 0$.
	\end{theorem}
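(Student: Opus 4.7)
\textbf{Proof proposal for Theorem \ref{theorem:adaptedOCP}.}
The plan is to mimic the comparison/compactness argument used in the proof of Theorem \ref{theorem:optimalControl}, but exploit the penalty term $\tfrac12\|b-\ol b\|_{L^2(\Sigma;\Rn)}^2$ to force the sequence of $\beta$-optimizers towards the specific fixed optimizer $\ol b$. By Lemma \ref{lemma:CPtildeBeta}, for each $\beta\in(0,1)$ we may pick an optimal pair $(\ol\chi_\beta,\ol b_\beta)\in\C X\times\C B_{adm}$ of \eqref{eqn:CPtildeBeta} with associated state $(\ol u_\beta,\ol\chi_\beta)=S_\beta(\ol b_\beta)$. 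Since $\C B_{adm}$ is bounded and weakly closed (assumption (O3)), a subsequence satisfies $\ol b_\beta\to \tilde b$ weakly in $\C B$ for some $\tilde b\in\C B_{adm}$, and Lemma \ref{lemma:contProp} then yields $S_\beta(\ol b_\beta)\to S_0(\tilde b)=:(\tilde u,\tilde\chi)$ weakly-star in $\C O$.

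Next, the upper bound: since $\ol b\in\C B_{adm}$ is admissible for \eqref{eqn:CPtildeBeta} (with state $(S_\beta(\ol b))$), the optimality of $(\ol\chi_\beta,\ol b_\beta)$ gives
\begin{align*}
\widetilde{\C J}(\ol\chi_\beta,\ol b_\beta)\leq \widetilde{\C J}(S_{\beta|2}(\ol b),\ol b)=\C J(S_{\beta|2}(\ol b),\ol b),
\end{align*}
because the penalty vanishes at $b=\ol b$. Applying Corollary \ref{cor:Jconv} to the right-hand side yields
\begin{align*}
\limsup_{\beta\downarrow 0}\widetilde{\C J}(\ol\chi_\beta,\ol b_\beta)\leq \C J(S_{0|2}(\ol b),\ol b)=\C J(\ol\chi,\ol b).
\end{align*}

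For the lower bound, I use weak(-star) lower semicontinuity of $\widetilde{\C J}$. The $L^2(\Sigma;\Rn)^2$-norms are weakly l.s.c., so it only remains to handle the $L^\infty$-terms in $\chi$: by the Aubin--Lions type compactness used in the proof of Corollary \ref{cor:Jconv}, the bound $S_{\beta|2}(\ol b_\beta)\in\C X$ upgrades the weak-star convergence in $\C X$ to strong convergence $\ol\chi_\beta\to\tilde\chi$ in $C^0(\ol Q)$, which makes the $L^\infty(Q)$- and $L^\infty(\Omega)$-terms continuous in the limit. Combining this with the upper bound,
\begin{align*}
\C J(\tilde\chi,\tilde b)+\tfrac12\|\tilde b-\ol b\|_{L^2(\Sigma;\Rn)}^2=\widetilde{\C J}(\tilde\chi,\tilde b)\leq \liminf_{\beta\downarrow 0}\widetilde{\C J}(\ol\chi_\beta,\ol b_\beta)\leq \C J(\ol\chi,\ol b).
\end{align*}
Since $\ol b$ is optimal for \eqref{eqn:CP} and $\tilde b\in\C B_{adm}$ with $\tilde\chi=S_{0|2}(\tilde b)$, we have $\C J(\tilde\chi,\tilde b)\geq \C J(\ol\chi,\ol b)$, which forces $\|\tilde b-\ol b\|_{L^2(\Sigma;\Rn)}=0$, i.e. $\tilde b=\ol b$, and hence $\tilde\chi=\ol\chi$ by the uniqueness part of Corollary \ref{cor:uniqueness}.

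Because this limit is unique, no subsequence extraction is needed: the full net satisfies $\ol b_\beta\to\ol b$ weakly in $\C B$ and $\ol\chi_\beta\to\ol\chi$ in $C^0(\ol Q)$. The sandwich between the limsup bound and the liminf bound (both equal to $\C J(\ol\chi,\ol b)$) yields the claimed convergence $\widetilde{\C J}(\ol\chi_\beta,\ol b_\beta)\to \C J(\ol\chi,\ol b)$ as $\beta\downarrow 0$. The main subtlety I expect is the l.s.c. of the $L^\infty$-terms; this is where upgrading weak convergence in $\C X$ to strong convergence in $C^0(\ol Q)$ via a Simon-type compactness (already invoked in Corollary \ref{cor:Jconv}) is essential, whereas the rest of the argument is a clean application of Barbu's adapted-functional trick.
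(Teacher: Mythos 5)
Your proposal is correct and follows essentially the same route as the paper's proof: existence of adapted minimizers via Lemma \ref{lemma:CPtildeBeta}, weak compactness of $\C B_{adm}$ plus Lemma \ref{lemma:contProp} (with the $C^0(\ol Q)$ upgrade) for the liminf bound, comparison with the admissible control $\ol b$ together with Corollary \ref{cor:Jconv} for the limsup bound, and the penalty term forcing $\tilde b=\ol b$, whence the sandwich argument. The only (harmless) additions are your explicit remark on the $L^\infty$-terms and the observation that uniqueness of the limit removes the need for subsequences.
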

	\begin{proof}
		For every $\beta\in(0,1)$ we pick an optimal pair $(\ol\chi_\beta,\ol b_\beta)\in\C X\times\C B_{adm}$
		for the adapted problem \eqref{eqn:CPtildeBeta}. By the boundedness and closedness of $\C B_{adm}$ (see (O3)), there exists a $b\in\C B_{adm}$
		satisfying
		\begin{align}
		\label{eqn:bConv}
			\ol b_\beta\to b\quad\text{weakly in }\C B\text{ as }\beta\downarrow0.
		\end{align}
		Owing to Lemma \ref{lemma:contProp} we find
		$$
			(\ol u_\beta,\ol\chi_\beta)=S_\beta(\ol b_\beta)\to S_0(b)=:(u,\chi)\quad\text{weakly-star in }\C O\text{ as }\beta\downarrow0.
		$$
		and, particularly,
		\begin{align}
		\label{eqn:chiBetaConv}
			\ol\chi_\beta\to \chi\quad\text{strongly in }C^0(\ol Q)\text{ as }\beta\downarrow0.
		\end{align}
		We now aim to prove that $b=\ol b$. Once this is shown, we can infer from the unique
		solvability of the state system (see Theorem \ref{theorem:contDepCont}) that also
		$(u,\chi)=(\ol u,\ol\chi)$.
		
		Indeed, we have, owing to \eqref{eqn:bConv}, \eqref{eqn:chiBetaConv}, the sequentially weak lower semicontinuity of $\widetilde{\C J}$,
		and the optimality property of $(\ol\chi,\ol b)$ for problem \eqref{eqn:CP},
		\begin{align}
			\liminf_{\beta\downarrow 0}\widetilde{\C J}(\ol\chi_\beta,\ol b_\beta)
			\geq{}& \C J(\chi,b)+\frac12\|b-\ol b\|_{L^2(\Sigma;\Rn)}^2\notag\\
			\geq{}& \C J(\ol\chi,\ol b)+\frac12\|b-\ol b\|_{L^2(\Sigma;\Rn)}^2.
		\label{eqn:liminfJTildeEst}
		\end{align}
		On the other hand, the optimality property of $(\ol\chi_\beta,\ol b_\beta)$ for problem
		\eqref{eqn:CPtildeBeta} yields that
		$$
			\widetilde{\C J}(\ol\chi_\beta,\ol b_\beta)=
			\widetilde{\C J}(S_{\beta|2}(\ol b_\beta),\ol b_\beta)
			\leq \widetilde{\C J}(S_{\beta|2}(\ol b),\ol b).
		$$
		Whence, taking the limes superior as $\beta\downarrow 0$ on both sides and invoking Corollary \ref{cor:Jconv}, we find
		\begin{align}
			\limsup_{\beta\downarrow 0}\widetilde{\C J}(\ol\chi_\beta,\ol b_\beta)
			\leq{}& \widetilde{\C J}(S_{0|2}(\ol b),\ol b)=\widetilde{\C J}(\ol\chi,\ol b)
			=\C J(\ol\chi,\ol b).
		\label{eqn:limsupJTildeEst}
		\end{align}
		We obtain by combining \eqref{eqn:liminfJTildeEst} and \eqref{eqn:limsupJTildeEst}
		$$
			\frac12\|b-\ol b\|_{L^2(\Sigma;\Rn)}^2=0.
		$$
		Thus $b=\ol b$ and, consequently, $(u,\chi)=(\ol u,\ol\chi)$ by Theorem \ref{theorem:contDepCont}.
		
		Finally, by using $b=\ol b$ in \eqref{eqn:liminfJTildeEst} and \eqref{eqn:limsupJTildeEst}, we end up with
		$\lim_{\beta\downarrow 0}\widetilde{\C J}(\ol\chi_\beta,\ol b_\beta)=\C J(\ol\chi,\ol b)$.
		\ep
	\end{proof}
	\begin{remark}
	\label{remark:adaptedCP}
		As we have seen in Theorem \ref{theorem:optimalControl},
		every weakly convergent subsequence of minimizers of \eqref{eqn:CPbeta}
		converges to a minimizer of \eqref{eqn:CP}.
		However, since the problem \eqref{eqn:CP} might not be uniquely solvable
		different subsequences of minimizers of \eqref{eqn:CPbeta} might converge
		to different minimizers of \eqref{eqn:CP}.
		By considering the adapted control problem \eqref{eqn:CPtildeBeta},
		we force the minimizers of \eqref{eqn:CPtildeBeta}
		to converge to the desired minimizer of \eqref{eqn:CP}
		as shown in Theorem \ref{theorem:adaptedOCP}.
		For further details to non-convex optimal control problems we refer to
		\cite{Bar81}.
	\end{remark}

	\addcontentsline{toc}{chapter}{Bibliography}{\footnotesize{\setlength{\baselineskip}{0.2 \baselineskip}
	\bibliography{references}
	}
	\bibliographystyle{plain}}
\end{document}